\documentclass{amsart}

\usepackage[english]{babel}
\selectlanguage{english}
\usepackage{lineno} 


\usepackage[colorlinks]{hyperref}
\hypersetup{
    linkcolor=red, 
    citecolor=blue, 
}

\usepackage{multicol}
\usepackage{multirow}
\usepackage{graphicx}

\usepackage{amsmath}

\usepackage[latin1]{inputenc}
\usepackage{amsthm}
\usepackage{amsfonts}
\usepackage{amssymb}
\usepackage{enumitem}
\usepackage{xcolor}

\newtheorem*{theorem*}{Theorem}
\newtheorem{theorem} {Theorem}[section]
\newtheorem{corollary}[theorem]{Corollary}
\newtheorem{lemma}[theorem]{Lemma}
\newtheorem{proposition}[theorem]{Proposition}

\theoremstyle{definition}
\newtheorem{example}[theorem]{Example}
\newtheorem{remark}[theorem]{Remark}

\newtheorem{definition}[theorem]{Definition}

\newcommand{\length}  {\operatorname{length} }
\newcommand{\conv}  {\operatorname{conv} }

\newcommand{\vertices}  {\operatorname{vertices} }
\newcommand{\Vol}  {\operatorname{Vol} }
\newcommand{\Surf}  {\operatorname{Surf} }

\newcommand{\Z}{\mathbb{Z}}
\newcommand{\Q}{\mathbb{Q}}
\newcommand{\R}{\mathbb{R}}
\newcommand{\N}{\mathbb{N}}
\newcommand{\T}{\mathbb{T}}

 \usepackage[colorinlistoftodos,bordercolor=orange,backgroundcolor=orange!20,linecolor=orange,textsize=tiny]{todonotes}

\newcommand{\fracV}{\frac1V }
\newcommand{\fracnone}{}

\begin{document}

\setcounter{page}{1}


\title{The complete classification of empty lattice $4$-simplices}
\author{\'Oscar Iglesias-Vali\~no \and Francisco Santos}

\address
{
Departamento de Matem\'aticas, Estad\'istica y Computaci\'on,
Universidad de Cantabria,
39005 Santander, Spain
}
\email{francisco.santos@unican.es, oscar.iglesias@unican.es}

\thanks{Supported by grants MTM2014-54207-P and MTM2017-83750-P (both authors) and BES-2015-073128 (O.Iglesias) of the Spanish Ministry of Economy and Competitiveness. F.~Santos is also supported by 
the Einstein Foundation Berlin under grant EVF-2015-230}

\begin{abstract}
An empty simplex is a lattice simplex with only its vertices as lattice points. 
Their classification in dimension three was completed by G.~White in 1964. 
In dimension four, the same task was started in 1988 by S.~Mori, D.~R.~Morrison, and I.~Morrison, with their motivation coming from the close relationship between empty simplices and terminal quotient singularities. They conjectured a classification of empty simplices of prime volume, modulo finitely many exceptions. 
Their conjecture was proved by Sankaran (1990) with a simplified proof by Bober (2009). 
The same classification was claimed by Barile et al.~in 2011 for simplices of non-prime volume, but this statement was proved wrong by Blanco et al.~(2016+).

In this article, we complete the classification of $4$-dimensional empty simplices. 
In doing so, we correct and complete the classification claimed by Barile et al., and we also compute all the finitely many exceptions, by first proving an upper bound for their volume. 
The whole classification has:

\begin{enumerate}
\item One $3$-parameter family, consisting of simplices of width equal to one.
\item Two $2$-parameter families (the one in Mori et al., plus a second new one).
\item Forty-six $1$-parameter families (the 29 in Mori et al., plus 17 new ones).
\item 2461 individual simplices not belonging to the above families, with (normalized) volumes ranging between 24 and 419.
\end{enumerate}

We characterize the infinite families of empty simplices in terms of the lower dimensional point configurations that they project to, with techniques that can be applied to higher dimensions and larger classes of lattice polytopes.
\end{abstract}

\keywords{Lattice polytopes, unimodular equivalence, lattice points, finiteness.}
\subjclass[2000]{52B20, 14E30, 52C07, 14M25}
\maketitle

\setcounter{tocdepth}{1}

\section{Introduction and statement of results}
\label{sec:Introduction}

Let $\Lambda \subset \R^d$ be a linear lattice. 
For most of what we do there is no loss of generality in taking $\Lambda= \Z^d$. 
An \emph{empty $d$-simplex} (often called an \emph{elementary simplex} in the algebraic geometry literature) is a $d$-simplex $P=\conv(v_0,\dots,v_d)$ such that $P\cap \Lambda = \{v_0,\dots,v_d\}$. 
Empty simplices are a particular class of \emph{lattice polytopes}, that is, polytopes with vertices in $\Lambda$. 
In this paper we complete the classification of $4$-dimensional empty simplices, a task started more than thirty years ago by Mori, Morrison and Morrison~\cite{MMM}. 

Classifying is meant modulo affine isomorphism of the lattice. 
That is, two lattice $d$-polytopes $P$ and $P'$ (with respect to lattices $\Lambda$ and $\Lambda'$) are called \emph{isomorphic} if there is an affine map $f:\R^d\to \R^d$ with $f(P)=P'$ that induces a lattice isomorphism $f|_\Lambda: \Lambda \overset{\cong}{\to} \Lambda'$.
For example, \emph{unimodular $d$-simplices}, that is, simplices whose vertices form an affine $\Z$-basis of $\Lambda$, are exactly the lattice polytopes isomorphic to the standard simplex $\Delta_d:=\conv(0, e_1,\dots, e_d)$ in the standard lattice $\Z^d$.

Interest in empty simplices and their classification comes from two sources.

From the perspective of discrete geometry and the geometry of numbers, empty simplices can be considered the ``building blocks'' of  lattice polytopes, since every lattice polytope can be triangulated into empty simplices. 
In particular, understanding the structure of empty simplices in any fixed dimension $d$ allows one to derive consequences for all lattice $d$-polytopes. 
A first example of this in dimension two is Pick's formula relating the area and number of lattice points in a lattice polygon~\cite{BeckRobins}, which can easily be derived from Euler's formula and the fact that every empty triangle is unimodular. 
Examples where the classification of \emph{3-dimensional} empty simplices is applied to derive general results on lattice $3$-polytopes are \cite{KantorSarkaria,SantosZiegler,CodenottiSantos,5points}.

From the perspective of algebraic geometry, empty simplices are (almost) in bijection with the \emph{terminal quotient singularities} that arise in the minimal model program.
Indeed, all abelian quotient singularities are toric and their corresponding cone is simplicial, that is, spanned by a linear basis $v_1,\dots,v_d$ of $\Q^d$. 
Taking (without loss of generality) the $v_i$'s to be the primitive integer vectors along their rays, the singularity is \emph{terminal} if and only if the simplex $\conv(0,v_1,\dots,v_d)$ is empty. 
Thus, isomorphism classes of terminal quotient singularities of dimension $d$ are in bijection with isomorphism classes of ``empty $d$-simplices with a marked vertex'', where the marked vertex is the one to be placed at the origin. 
In particular, the classification of $3$-dimensional empty simplices, sometimes dubbed the \emph{terminal lemma}, was instrumental in the completion of the minimal model program in dimension three by Mori~\cite{Mori85,Mori88}.
See~\cite{MS,MMM,Borisov-class,Borisov, Dais} for more on this relation.

The classification of empty simplices of dimension two is trivial: as said above, all empty triangles are unimodular.
In  dimension three there are infinitely many non-isomorphic empty tetrahedra, but they admit a simple two-parameter classification.
In this classification and in the rest of the paper we use the word volume always meaning \emph{normalized volume}, so that the volume of a unimodular simplex is 1 and the volume of every lattice polytope is an integer.
In the correspondence with quotient singularities, the normalized volume of a lattice simplex equals the multiplicity of the corresponding quotient singularity.

\begin{theorem}[\cite{White}, see also~\cite{MS,Sebo}]
\label{thm:white}
Let $q\in \N$. 
Every empty tetrahedron $T$ of volume $q$ is isomorphic to 
\[
T(p,q):= \conv \{ (0,0,0), (1,0,0), (0,0,1),(p,q,1) \}
\]
for some $p\in\Z$ with $\gcd(p,q)=1$. 
In particular, there is an affine lattice functional taking value $0$ at two vertices of $T$ and value $1$ at the other two.
Moreover, $T(p,q)$ is isomorphic to $T(p',q)$ if and only if $p'=\pm p^{\pm 1} \pmod q$.
\end{theorem}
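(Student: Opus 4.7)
\bigskip
\noindent\textbf{Proof plan.}

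The proof naturally decomposes into three stages. The heart of the matter is establishing the \emph{width-one property}: the existence of an affine lattice functional taking values in $\{0,1\}$ on the vertices, with two vertices at each level. Once this is available, the rest is essentially a normalization by lattice transformations plus elementary computations. The plan is therefore (i) prove width one, (ii) pick coordinates realizing the normal form $T(p,q)$, and (iii) analyze the symmetries to obtain the isomorphism classification.

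For width one, the approach I would take is the following. Fix a vertex, say $v_0$, and let $\Lambda_0\subset\Lambda$ be the sublattice generated by $v_1-v_0,v_2-v_0,v_3-v_0$. Then $|\Lambda/\Lambda_0|=q$, and the half-open parallelepiped $\Pi_0=\{\sum t_i(v_i-v_0):t_i\in[0,1)\}$ contains exactly one lattice representative of each coset. Emptiness of $T$ translates into the combinatorial condition that every nonzero representative $(t_1,t_2,t_3)\in[0,1)^3$ must satisfy $t_1+t_2+t_3>1$. Writing down the analogous condition based at each of the other three vertices and combining them, one shows that for every nonzero coset at least one of the coordinates is forced to be $0$. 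This forces the coset group to be cyclic and produces a lattice functional attaining exactly two values on the four vertices. This combinatorial step is the main obstacle and is specific to dimension three; indeed its failure in dimension four is exactly what makes the classification pursued in this paper necessary.

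Given width one, label the two vertices with functional value $0$ as $v_0,v_1$ and the other two as $v_2,v_3$. The edges $v_0v_1$ and $v_2v_3$ must be primitive, for any interior lattice point on them would lie in $T$. Apply an affine lattice automorphism mapping the width-one functional to the third coordinate and sending $v_0\mapsto(0,0,0)$, $v_1-v_0\mapsto(1,0,0)$ (possible because $v_1-v_0$ is primitive and extends to a basis of the height-$0$ sublattice). In the height-$1$ affine plane the lattice is a translated $\Z^2$; choose coordinates so that $v_2=(0,0,1)$. Then $v_3=(p,q',1)$ for some integers with $\gcd(p,q')=1$ by primitivity of $v_2v_3$. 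A direct $3{\times}3$ determinant computation gives normalized volume $|q'|$, so $q'=\pm q$, and after a further reflection we obtain exactly the form $T(p,q)$.

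Finally, for the isomorphism classification, I would enumerate the lattice symmetries of this normal form. Swapping $v_0\leftrightarrow v_1$ and simultaneously $v_2\leftrightarrow v_3$ and reducing modulo $q$ gives $p\mapsto -p$; swapping the height-$0$ pair with the height-$1$ pair (which exchanges the roles of the two primitive edges) gives $p\mapsto p^{-1}\bmod q$. Combining these operations produces the group generated by $p\mapsto \pm p^{\pm1}\pmod q$, and a short argument shows that no other lattice isomorphism can identify two simplices of this form. Thus $T(p,q)\cong T(p',q)$ if and only if $p'\equiv\pm p^{\pm1}\pmod q$, completing the statement. The real work, as expected, is concentrated in the width-one step; the remainder is bookkeeping with $\mathrm{GL}_3(\Z)$ and basic modular arithmetic.
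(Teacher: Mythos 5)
The paper itself does not prove this theorem: it is imported from White's 1964 paper, with Morrison--Stevens and Seb\H{o} cited for alternative proofs. So your sketch stands on its own. Stages (ii) and (iii) are fine: once the width-one functional exists, primitivity of the two edges, a shear fixing the height-zero plane, and the determinant computation give the normal form, and the isomorphism classification amounts to asking which permutations and unit multiples preserve a $4$-tuple of the shape $(p,-p,-1,1)$, which indeed yields exactly $p\mapsto\pm p^{\pm1}\pmod q$. That part is bookkeeping, as you say.

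The gap is in stage (i), which you rightly identify as the whole content of the theorem. Your stated route through it --- ``combining the four vertex conditions, one shows that for every nonzero coset at least one of the coordinates is forced to be $0$'' --- asserts something that is false. In $T(2,5)$ the class of $(0,1,0)$ has parallelepiped coordinates $(3/5,4/5,1/5)$ relative to the origin and barycentric coordinates $(2/5,3/5,4/5,1/5)$: nothing vanishes. In fact emptiness forces the opposite: a nonzero class with a vanishing barycentric coordinate would give a non-unimodular (hence non-empty) triangular facet. What combining the four conditions (applied to each class $g$ and to $-g$) actually yields is that every nonzero class has all four barycentric coordinates in $(0,1)$ with $\sum_i b_i=2$. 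The statement you need is stronger and different: the four coordinates of every class split into two pairs each summing to $1$, i.e.\ the tuple is $(\alpha,-\alpha,\beta,-\beta)$ modulo $1$ as in Example~\ref{example:White}; the width-one functional is then $x\mapsto b_2(x)+b_3(x)$. This pairing does not follow from $\sum_i b_i=2$ by any soft argument --- $(0.1,0.5,0.6,0.8)$ sums to $2$ without pairing up --- so one must genuinely exploit that \emph{all} $q-1$ multiples of a generator satisfy the condition. That deduction is the three-dimensional ``terminal lemma,'' the step whose four-dimensional analogue fails and motivates this entire paper, and your proposal contains no argument for it. Until it is supplied, the proof is incomplete.
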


In dimension four the classification is much more complicated, as we will see. 
In order to state it we need to introduce some concepts and notation. 
For a given lattice $d$-simplex $P=\conv(v_0,\dots,v_d)$, we denote by $\Lambda_P$ the affine lattice generated by $\{v_0,\dots,v_d\}$. 
Without loss of generality we assume $\Lambda_P$ to contain the origin (e.g., we translate $P$ so that $v_0=0$) and thus we have a quotient group $G_P:= \Lambda/\Lambda_P$, which is a finite abelian group of order equal to the volume of $P$. 
We say that $P$ is \emph{cyclic} if $G_P$ is a cyclic group.

\begin{theorem}[Barile et al.~\protect{\cite[Theorem~1]{BBBK11}}]
\label{thm:cyclic}
All empty simplices of dimension four are cyclic.
\end{theorem}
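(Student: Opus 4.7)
Plan: Argue by contradiction. Suppose $G_P$ is not cyclic; by the structure theorem for finite abelian groups there is a prime $p$ such that $G_P$ contains a subgroup $H\cong(\Z/p)^2$. Let $\Lambda_H=\pi^{-1}(H)$ for the projection $\pi\colon\Lambda\to G_P$, so $\Lambda_P\subseteq\Lambda_H\subseteq\Lambda$ with $\Lambda_H/\Lambda_P=H$. Since
\[
P\cap\Lambda_H\subseteq P\cap\Lambda=\{v_0,\dots,v_4\}\subseteq\Lambda_P\subseteq\Lambda_H,
\]
the simplex $P$ remains empty with respect to $\Lambda_H$, and its quotient group there is exactly $(\Z/p)^2$. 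Hence it suffices to rule out empty $4$-simplices with $G_P\cong(\Z/p)^2$.

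For such a $P$, I would use barycentric coordinates to embed $G_P$ into $\mathbb{F}_p^5\cap\{\sum\alpha_i=0\}$; the image $W$ is a $2$-dimensional $\mathbb{F}_p$-subspace, which I view as a linear code of length $5$ and dimension $2$. For each non-zero $g\in G_P$ let $(a_0(g),\dots,a_4(g))\in\{0,\dots,p-1\}^5$ denote the unique representative in the half-open parallelepiped of $\Lambda_P$, and define its height $h(g)=\tfrac{1}{p}\sum_i a_i(g)\in\{0,1,2,3,4\}$. Emptiness of $P$ forbids height $1$. The representative of $-g$ is obtained by replacing each non-zero $a_i/p$ by $1-a_i/p$ (and leaving zero coordinates untouched), so $h(g)+h(-g)=|\operatorname{supp}(g)|$; together with $h(\pm g)\in\{2,3\}$ for $g\neq0$, this forces $|\operatorname{supp}(g)|\in\{4,5\}$. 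Equivalently, $W$ has minimum distance at least $4$.

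The core step is a double-count. If some coordinate projection $W\to\mathbb{F}_p$ were zero, $W$ would lie in $\mathbb{F}_p^4$ as a $2$-dimensional code of minimum weight $\geq 4$, violating the Singleton bound $d\leq n-k+1=3$. So every projection is surjective, and for each coordinate $i$ each value in $\mathbb{F}_p$ is realized by exactly $p$ elements of $W$, giving $\sum_{v\in W}a_i(v)=p^2(p-1)/2$. Summing over the five coordinates yields $\sum_{g\neq0}h(g)=5p(p-1)/2$. On the other hand, the pairing $g\leftrightarrow-g$ contributes $h(g)+h(-g)=4$ for each weight-$4$ pair (both heights lie in $\{2,3\}$ and sum to $4$, forcing both to equal $2$) and $5$ for each weight-$5$ pair. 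Writing $N_4,N_5$ for the numbers of non-zero weight-$4$ and weight-$5$ elements, the system
\[
N_4+N_5=p^2-1,\qquad 4N_4+5N_5=5p(p-1)
\]
gives $N_4=5(p-1)$ and $N_5=(p-1)(p-4)$. For $p=2$ or $p=3$ this yields $N_5<0$, already a contradiction.

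For $p\geq5$ both $N_4$ and $N_5$ are non-negative and the argument is more delicate. Each of the five $4$-subsets $S$ of coordinates supports at most one weight-$4$ line of $W$: two such lines would span a $2$-dimensional subspace of $W$ entirely supported on $S$, forcing $W\subseteq\mathbb{F}_p^S\cong\mathbb{F}_p^4$ to be a $[4,2,4]$ MDS code, again ruled out by Singleton. Since $N_4=5(p-1)$, the weight-$4$ lines are in bijection with the five $4$-subsets; the remaining $p-4$ lines of $W$ are weight-$5$. A final case analysis of these weight-$5$ lines, using the scalar action of $\mathbb{F}_p^\times$ together with the integer equality $\sum a_i\in\{2p,3p\}$ (and the fact that the diagonal line $\langle(1,\dots,1)\rangle$ cannot lie in $W$, since its non-trivial multiples have heights outside $\{2,3\}$), should complete the proof. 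This last step, handling all primes $p\geq5$ uniformly, is where I expect the main difficulty to lie.
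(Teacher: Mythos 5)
The paper does not prove this statement at all: it is imported verbatim as Theorem~1 of Barile--Bernardi--Borisov--Kantor \cite{BBBK11}, so there is no internal proof to compare yours against, and I evaluate your argument on its own merits. Your setup is sound and goes in the same general direction as the literature: the reduction to $G_P\supseteq(\Z/p)^2$ via the intermediate lattice $\Lambda_H$ is correct (emptiness is inherited by sublattices containing the vertices); the embedding of $H$ as a $2$-dimensional subspace $W$ of the sum-zero hyperplane in $\mathbb{F}_p^5$ is correct; the equivalence ``no height-$1$ class'' $\Leftrightarrow$ emptiness, the identity $h(g)+h(-g)=|\operatorname{supp}(g)|$, the resulting bounds $h(g)\in\{2,3\}$ and minimum weight $\ge 4$, the Singleton argument, and the count $N_4=5(p-1)$, $N_5=(p-1)(p-4)$ are all verified correctly. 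This disposes of $p=2$ and $p=3$.

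The genuine gap is that for $p\ge 5$ you have not derived a contradiction, and you say so yourself. At that point your counting is perfectly consistent: $W$ would have exactly one weight-$4$ line in each coordinate hyperplane and $p-4$ weight-$5$ lines, and nothing you have proved excludes such a code all of whose nonzero elements have height in $\{2,3\}$. This residual case is not a routine verification --- it is where essentially all of the content of the theorem sits (it is the analogue, for the group $(\Z/p)^2$, of the hard part of the Mori--Morrison--Morrison/Sankaran analysis). To finish one would need substantially more structure, e.g.\ applying White's theorem (Theorem~\ref{thm:white}) to each facet $F_j$, which forces the generator of the weight-$4$ line $L_j$ to have its four nonzero entries forming two opposite pairs modulo $p$ (compare Proposition~\ref{prop:emptyfacets}), and then playing the five resulting constraints against each other; this is a nontrivial case analysis, not a formality that ``should complete the proof.'' As written, the argument establishes the theorem only for the (easy) primes $2$ and $3$ and is therefore incomplete.
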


This result suggests the use of a generator of $G_P$ as a simple and compact invariant classifying empty $4$-simplces. 
Let us be more precise.
For each $p\in \R^d$, the \emph{barycentric coordinates} of $p$ with respect to the simplex $P=\conv(v_0,\dots,v_d)$  are the unique $(d+1)$-tuple $b=(b_0,\dots,b_d)\in \R^{d+1}$ such that $\sum b_i=1$ and $\sum b_i v_i=p$. 
Two points represent the same class modulo $\Lambda_P$ if, and only if, their barycentric coordinates differ by an integer vector. 
In particular, we call \emph{barycentric coordinates of a class $[p]\in G_P$} the barycentric coordinates of any representative $p\in [p]$, considered as a vector in $(\R/\Z)^{d+1}$. 
One can choose a canonical representative for barycentric coordinates by requiring all coordinates to be in $[0,1)$, but it will  typically be more convenient for us to choose a representative with sum of coordinates equal to zero. 

Observe that the barycentric coordinates of every $p\in \Lambda$ lie in $\frac1V \Z^{d+1}$, where $V$ is the  volume of $P$. Thus, we can express every $p\in \Lambda$ as an integer vector in $\Z^{d+1}$, namely $V$ times its barycentric coordinates.

\begin{definition}
\label{defi:tuple}
A \emph{$(d+1)$-tuple} for a cyclic $d$-simplex $P$ of volume $V$ is the vector of barycentric coordinates of any generator of $G_P$, multiplied by $V$. 
It is an integer vector in $\Z^{d+1}$ whose coordinates we consider modulo $V$.
\end{definition}

\begin{corollary}
\label{coro:cyclic}
Every cyclic $d$-simplex (in particular, every empty $d$-simplex with  $d\le 4$)  can be characterized by its volume $V$ and a $(d+1)$-tuple in ${\Z_V}^{d+1}$.
Two cyclic simplices $P$ and $P'$ of the same volume $V$ are isomorphic if and only if their tuples can be obtained from one another via mutiplication by a unit in $\Z_V$ and/or permutation of coordinates.
\qed
\end{corollary}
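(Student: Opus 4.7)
The plan is to set up a bijection between isomorphism classes of cyclic $d$-simplices of volume $V$ and orbits of tuples in $\Z_V^{d+1}$ under the action of $S_{d+1} \times \Z_V^*$ by coordinate permutation and multiplication. Conceptually, a cyclic simplex $P$ is determined, up to isomorphism, by two pieces of data: the pair $(\Lambda_P, \{v_0,\dots,v_d\})$, which is unimodular and therefore unique up to isomorphism, together with a cyclic overlattice $\Lambda \supset \Lambda_P$ of index $V$. Such an overlattice is in turn encoded by the barycentric-coordinate class of any generator of $\Lambda/\Lambda_P$.

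\textbf{Well-definedness of the invariant.} For a fixed $P$, the two sources of ambiguity in reading a tuple are the ordering of the vertices (which permutes the barycentric coordinates) and the choice of a generator $[p]\in G_P$. Since the generators of a cyclic group of order $V$ are precisely the elements $u\cdot[p]$ with $u\in\Z_V^*$, and since the barycentric coordinate map is $\Z$-linear modulo $\Lambda_P$, a change of generator multiplies the tuple by $u$. Different representatives of the same class $[p]$ in $\Lambda$ differ by an element of $\Lambda_P$, whose barycentric coordinates form an integer vector summing to zero, so after reducing modulo $V$ the tuple is unaffected.

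\textbf{Completeness.} Given two cyclic simplices $P$ and $P'$ of volume $V$ whose tuples are equivalent, after a suitable relabelling of the vertices of $P'$ and replacement of its chosen generator by a unit multiple we may assume the two tuples coincide in $\Z_V^{d+1}$. Let $f:\R^d\to\R^d$ be the affine map sending $v_i\mapsto v_i'$; it automatically satisfies $f(\Lambda_P)=\Lambda_{P'}$. Because $f$ preserves barycentric coordinates and the tuples of $p$ and $p'$ agree modulo $V$, one gets $f(p)\equiv p'\pmod{\Lambda_{P'}}$, and in particular $f(p)\in\Lambda'$. Since $\Lambda=\Lambda_P+\Z p$ and $\Lambda'=\Lambda_{P'}+\Z p'$, this implies $f(\Lambda)=\Lambda'$, giving an isomorphism of simplices. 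Conversely, any isomorphism $P\to P'$ bijects vertex sets (producing a permutation) and sends any generator of $G_P$ to a generator of $G_{P'}$ (producing the unit). I do not anticipate any serious obstacle here; the statement is essentially a bookkeeping consequence of Theorem~\ref{thm:cyclic} together with the preceding discussion of barycentric coordinates, which is why the authors mark it as immediate.
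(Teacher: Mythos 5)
Your proposal is correct and follows exactly the argument the paper intends: the corollary is stated with an immediate \verb|\qed| because it is bookkeeping from Theorem~\ref{thm:cyclic} and the preceding discussion of barycentric coordinates modulo $\Z^{d+1}$, and your well-definedness and completeness steps (reconstructing $\Lambda=\Lambda_P+\Z p$ from the tuple and transporting generators along the affine map $v_i\mapsto v_i'$) are precisely the omitted details. No gaps.
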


\begin{example}[Empty $3$-simplices]
\label{example:White}
The  $3$-simplex  of Theorem~\ref{thm:white}, $T(p,q)= \conv \{ (0,0,0), (1,0,0), (0,0,1),(p,q,1) \}$, has the associated $4$-tuple $(p,-p,-1,1)$, since $(0,1,0)$ is a generator for $G_P\cong \Z_q$ and
\[
(0,1,0) =  \left(1+\frac{p}q\right) (0,0,0) - \frac{p}q(1,0,0)   - \frac1q(0,0,1) +  \frac1q(p,q,1).
\]
\end{example}

One last ingredient that is useful in order to state (and prove) our classification is to look at what is the smallest dimension of a hollow projection of a given simplex. 
Here, a \emph{hollow} polytope is a lattice polytope with no interior lattice points and a hollow projection of a hollow lattice polytope $P\subset \R^d$ is an affine map $\pi: \R^d \to \R^k$ such that $\pi(P)$ is hollow with respect to the lattice $\pi(\Lambda)$.
It is obvious that the only hollow $1$-polytope is the unimodular simplex (a unit segment) and it is easy to show (see e.~g.~\cite[Theorem 2]{Schicho}) that the only hollow $2$-polytope that does not project to a unit segment is the second dilation of a unimodular triangle. We denote it $2\Delta_2$ and it is displayed in Figure~\ref{fig:2Delta}.
\begin{figure}[htb]
\label{fig:2Delta}
\includegraphics{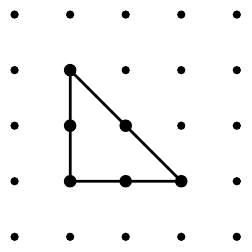}
\caption{The second dilation of a unimodular triangle $\Delta_2$, which is the only hollow $2$-polytope not projecting to a unit segment.}
\end{figure}
We can now state our main theorem:

\begin{theorem}[Classification of empty $4$-simplices]
\label{thm:main}
Let $P$ be an empty $4$-simplex of volume $V\in \N$ and let $k\in \{1,2,3,4\}$ be the minimum dimension of a hollow polytope that $P$ projects to. Then $P$ is as follows, depending on $k$:

\begin{itemize}

\item[\framebox{$k=1$:}] 
$P$ lies in the three-parameter family parametrized by $V$ and another two integer parameters $\alpha,\beta$ with $\gcd(\alpha,\beta,V)=1$; the $5$-tuple of $P$ is  $(\alpha+\beta,-\alpha,-\beta,-1,1)$. 

\item[\framebox{$k=2$:}] 
$P$ lies in one of the following two two-parameter families parametrized by $V$ and another
integer parameter $\alpha$ with $\gcd(\alpha,V)=1$:
\begin{align*}
 (1,-2,\alpha,-2\alpha,1+\alpha) &\quad\text{ with odd $V$,} \quad \text{ and } \\
\frac{V}2\left(0, 1 ,0, 1, 0\right)+ \fracnone (-1,-1,\alpha,-\alpha,2) &\quad\text{ with $V\in 4\Z$}.
\end{align*}
We call the first family \emph{primitive} and the second \emph{nonprimitive}.

\item[\framebox{$k=3$:}] Except for finitely many simplices (of volumes bounded by 72, see~Prop.~\ref{prop:sporadic-k=3}) $P$ belongs to one of the 29 \emph{primitive} + 17 \emph{nonprimitive} families with $5$-tuples shown in Tables~\ref{table:primitive} and~\ref{table:nonprimitive},
parametrized by the volume $V$ alone (plus a choice of sign in some of the nonprimitive families). 

The volume needs to satisfy the modular conditions stated in the caption of Table \ref{table:primitive} and in Table \ref{tab:nonprimitive-bipyr} (from Section~\ref{sec:k=3}), respectively.

\item [\framebox{$k=4$:}] There are finitely many possibilities for $P$, with their volumes bounded by 419.
See more details in Theorem~\ref{thm:sporadic}, below.
\end{itemize}
\end{theorem}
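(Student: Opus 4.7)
My plan is a case analysis on $k$, using Corollary~\ref{coro:cyclic} as the main computational tool: every empty $4$-simplex is encoded by its volume $V$ together with a $5$-tuple in $\Z_V^5$, with isomorphism governed by permutation of coordinates and multiplication by units of $\Z_V$. The role of $k$ is to reduce the classification in each case to a lower-dimensional problem where much is already known.

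For $k=1$ and $k=2$ the arguments are short and direct. A width-one $4$-simplex admits an affine lattice functional $\phi$ with value $0$ on three vertices and $1$ on the other two; choosing $\phi$ as the last coordinate, the three "bottom" vertices span a rank-$3$ affine sublattice of $\{\phi=0\}$, and the two "top" vertices contribute two additional lattice classes parametrized by integers $\alpha,\beta$ with $\gcd(\alpha,\beta,V)=1$. A direct barycentric computation modelled on Example~\ref{example:White} then produces the $5$-tuple $(\alpha+\beta,-\alpha,-\beta,-1,1)$, and emptiness is automatic since each width slice is at worst a unimodular triangle. For $k=2$ one has $\pi(P)\cong 2\Delta_2$, so the five vertices of $P$ distribute over the six lattice points of $2\Delta_2$. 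Because $P$ is a simplex and does not project further to a segment, a short combinatorial analysis of these distributions falls into exactly two patterns, producing the primitive and nonprimitive two-parameter families; the parity conditions on $V$ come from the primitive length of the fiber over the doubly-covered lattice point.

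For $k=3$ I would invoke the existing classification of hollow lattice $3$-polytopes that do not themselves project to a hollow $2$-polytope, which consists of finitely many maximal polytopes plus a bounded list of infinite families of thin ones. For each such base $Q$, I enumerate the empty-$4$-simplex lifts of $Q$ by matching the projected $5$-tuple with the vertex data of $Q$ in $G_Q$ and solving the resulting congruences on the remaining fiber coordinate. The residual degree of freedom produces a $1$-parameter family indexed by $V$, subject to modular conditions coming from the emptiness check. This should recover the $29$ primitive and $17$ nonprimitive families of Tables~\ref{table:primitive} and~\ref{table:nonprimitive}; the sporadic $k=3$ examples arise either over a sporadic base $Q$ or when $V$ is too small for the generic lift of a given family to actually be empty.

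The main obstacle is $k=4$. Here $P$ is itself hollow yet admits no lower-dimensional hollow projection, so there is no structural reduction and one must argue directly. My plan is to first prove an absolute upper bound on $V$ and then enumerate exhaustively. For the bound, since every projection of $P$ is non-hollow, $P$ has non-trivial lattice width in every relevant direction; combining this with dimension-$4$ width-volume inequalities (a quantitative flatness-type argument) should yield $V$ bounded by a few hundred, in line with the claimed $V\le 419$. Once $V$ is bounded, one runs an exhaustive computer enumeration over isomorphism classes of $5$-tuples in $\Z_V^5$, filtering for emptiness and for $k=4$. The real conceptual difficulty is concentrated in making the width bound sharp enough to keep the enumeration tractable: a naive flatness argument gives a bound orders of magnitude too large for direct search, so intermediate structural results bounding, for example, the covering radius or the width in specific directions are needed to reach the feasible range.
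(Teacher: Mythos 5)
Your outline for $k\le 3$ follows the same route as the paper: encode $P$ by its volume and $5$-tuple, project to a hollow configuration $S$ of five points in dimension $k$, and parametrize the lifts by the affine dependences of $S$ (this is exactly Corollary~\ref{coro:parameters}). For $k=1,2$ the only things you elide are proved via White's theorem: that the width-one functional can be taken to split the vertices $3$--$2$ rather than $4$--$1$, and that no four of the five projected points can lie on one edge of $2\Delta_2$; both are short but not automatic. For $k=3$, however, your plan is missing the input that actually produces the theorem's content: the finitely many admissible configurations $S$ are obtained by explicitly enumerating all five-point subconfigurations (not projecting to dimension two) of the twelve maximal hollow $3$-polytopes of Averkov--Kr\"umpelmann--Weltge, and the case split is by the combinatorics of $\conv(S)$. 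The $29+17$ families come precisely from the triangular bipyramids (with six of the $23$ nonprimitive ones ruled out by the coprimality conditions), while the finitely many exceptions with the bound $72$ come from the tetrahedron and quadrilateral-pyramid configurations, where a Radon-point/fiber-length argument bounds $\Vol(P)$ by a constant times $\Vol(\conv(S))$. Your attribution of the exceptions to ``$V$ too small for the generic lift to be empty'' is not how they arise.

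The serious gap is $k=4$. You correctly identify that everything hinges on an absolute volume bound, but you do not supply one, and the target you aim for (``a few hundred, in line with $V\le 419$'') is not what is provable this way: the paper proves $\Vol(P)\le 5184$ for hollow $4$-simplices of width two not projecting to a hollow $3$-polytope (width $\ge 3$ having been settled in prior work), and the value $419$ is an \emph{output} of the subsequent exhaustive enumeration, not of the bound. The actual mechanism is a recursive slicing argument: place $P\subset\R^3\times[-1,1]$, take the central slice $Q=P\cap\{x_4=0\}$ (a hollow half-integral $3$-polytope not projecting to a hollow $2$-polytope), and transfer volume bounds between a convex body and its central slice via a Schwarz-symmetrization inequality of the form $\Vol(K)\le a\bigl(\frac{a+b}{a}\bigr)^d\Vol(K_0)$; one then recurses on $Q$ down to hollow polygons with vertices of bounded denominator, where the Averkov--Wagner width--area inequalities close the argument. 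Without this (or an equivalent) quantitative step, your $k=4$ case is a statement of intent rather than a proof, and the finiteness claim in the theorem is unestablished.
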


\begin{remark}[From $d+1$-tuples to coordinates]
\label{remark:quintuple}
For the convenience of the reader, here comes an explicit recipe to translate a volume $V$  and a tuple $b=(b_0,\dots,b_d)\in {\Z_V}^{d+1}$ to actual coordinates for a cyclic $d$-simplex $P$ that they represent. 
%
%
Suppose one of the entries in $b$, say the first one $b_0$, is a unit modulo $V$; this property is equivalent to the corresponding facet of $P$ being unimodular (see Lemma~\ref{lemma:facetvolumes}) and it is a fact that all empty $4$-simplices have at least two such unimodular facets (Corollary~\ref{coro:2unimodular}).
Then, since we can multiply $b$ by a unit modulo $V$ there is no loss of generality in assuming $b_0=-1$. Also, since the entries of $b$ are important only modulo $V$ and add up to zero, without loss of generality we assume that $\sum_{i=0}^d b_i=V$. In these conditions, the simplex $P$ can be taken to be
 \[
\conv(e_1,\dots ,e_d, v),
\]
where $v=(b_1,\dots,b_d)$. Indeed, this simplex is clearly of volume $V$ (the last vertex lies at lattice distance $\sum_{i=1}^d b_i -1 = V$ from the facet spanned by the standard basis, which is unimodular) and it is represented by our tuple since the origin has barycentric coordinates $\frac{1}{V} b$ for it:
\[
(0,0,0,0) = \frac{b_1}{V}e_1 + \cdots + \frac{b_d}{V}e_d - \frac{1}{V}v.
\]

For a concrete example, consider the first $5$-tuple $(9,1, -2, -3, -5)$ of Table~\ref{table:primitive} and let $V=100$. We first modify $b$ to have sum of entries equal to $V$ and one entry $-1$ (we do this with the first entry, but it could be done with the second or fourth):
\begin{align*}
11 \cdot (9,1, -2, -3, -5) &= (-1,11,-22,-33,-55) \\ &= (-1, 11, -22, 67, 45) \pmod{100}.
\end{align*}
Then, the simplex $P$ can be taken to be
\[
\conv(e_1,e_2,e_3,e_4, (11,-22,67,45)).
\]

As another example, the simplex of $5$-tuple $(-1,-\alpha-\beta,\alpha,\beta,1)$ and volume $V$  (obtained  from the case $k=1$ of Theorem~\ref{thm:main} by permuting coordinates and reflecting about the origin) is isomorphic to 
\begin{align*}
\conv\{e_1,e_2,e_3,e_4,(V-\alpha-\beta, \alpha,\beta,1)\}
\cong
\conv\{0, e_2,e_3,e_4, (V,\alpha,\beta,1)\}.
\end{align*}
The last isomorphism is via the map $(x_1,x_2,x_3,x_4) \mapsto (\sum_ix_i-1,x_2,x_3,x_4)$.
\end{remark}

\begin{table}[h]
\hrule
\small
\renewcommand{\arraystretch}{1.4}
$\begin{array}{l}
 (9,1,-2,-3,-5) \\ 
 (9,2,-1,-4,-6) \\ 
 (12,3,-4,-5,-6) \\ 
 (12,2,-3,-4,-7) \\ 
 (9,4,-2,-3,-8) \\ 
 (12,1,-2,-3,-8) \\ 
 (12,3,-1,-6,-8) \\ 
 (15,4,-5,-6,-8) \\ 
 (12,2,-1,-4,-9) \\ 
 (10,6,-2,-5,-9) \\ 
\end{array}
\qquad 
\begin{array}{l}
 (15,1,-2,-5,-9) \\ 
 (12,5,-3,-4,-10) \\ 
 (15,2,-3,-4,-10) \\ 
 (6,4,3,-1,-12) \\ 
 (7,5,3,-1,-14) \\ 
 (9,7,1,-3,-14) \\ 
 (15,7,-3,-5,-14) \\ 
 (8,5,3,-1,-15) \\ 
 (10,6,1,-2,-15) \\
 (12,5,2,-4,-15) \\ 
\end{array}
\qquad
\begin{array}{l}
 (9,6,4,-1,-18) \\ 
 (9,6,5,-2,-18) \\ 
 (12,9,1,-4,-18) \\ 
 (10,7,4,-1,-20) \\ 
 (10,8,3,-1,-20) \\ 
 (10,9,4,-3,-20) \\ 
 (12,10,1,-3,-20) \\ 
 (12,8,5,-1,-24) \\ 
 (15,10,6,-1,-30) \\ 
\end{array}$
\bigskip
\caption{The 29 primitive $1$-parameter families of empty $4$-simplices (they coincide with the \emph{stable quintuples} of Mori, Morrison, and Morrison~\cite{MMM}). 
The necessary and sufficient conditions on $V$ for each $5$-tuple to define an empty simplex are that no factor of $V$ divides two (or more) of the entries in the $5$-tuple. 
E.g., 
in the first one $V\not\in3\Z$ since  $3$ divides two entries of $(9,1,-2,-3,-5)$;
in the last one $V\not\in2\Z\cup 3\Z\cup 5\Z$ since each of $2$, $3$ and $5$ divide 
two or three entries of $(15,10,6,-1,-30)$.
}
\label{table:primitive}
\hrule\end{table}

\begin{table}
\hrule
\small
\setlength\tabcolsep{0pt}
\renewcommand{\arraystretch}{1.4}

\[
\begin{array}{cc}
\begin{array}{l}
 \textbf{\quad\qquad\qquad Index 2:}\\
  \frac{V}2( 0,   0,  1, 1,  0 ) \ + \  
\fracnone  (3,   -1,  -6,    2,    2  ) \\
  \frac{V}2(1,  0,   0,   0,  1 ) \ + \  
 \fracnone  ( 4,   -3,    1,   -4,    2  ) \\
  \frac{V}2(1,  0,   0,   0,  1 ) \ + \  
 \fracnone  ( 2,    3,   -1,  -8,    4  ) \\
  \frac{V}2( 0,  1, 1,  0,   0 ) \ + \  
 \fracnone  ( 1,  -6,    2,   6,   -3  ) \\
  \frac{V}2(1,  0,  1,  0,   0 ) \ + \  
 \fracnone  (6,  -8,    4,   -3,    1  ) \\
  \frac{V}2(1,  0,   0,   0,  1 ) \ + \  
 \fracnone  ( 4,    3,   -1,  -12,   6  ) \\
\\
 \textbf{\quad\qquad\qquad Index 4:}\\
 \pm \frac{V}4(2, 1, 1,  0,   0 ) \ + \  
 \fracnone  (3,  -3,    1,   -2,    1  ) \\
\pm  \frac{V}4( 0, 1, 1,  0,  2 ) \ + \  
 \fracnone  ( 1,   3,   -1,  -6,   3  ) \\
%
\end{array}
& 
\begin{array}{l}
 \textbf{\quad\qquad\qquad Index 3:}\\
\pm  \frac{V}3( 0,   0,  2, 1,  0 ) \ + \  
 \fracnone  (-3,    2,    1,    1,   -1  ) \\
\pm  \frac{V}3(1,  0,  2,  0,   0 ) \ + \  
 \fracnone  ( 3,   -3,    1,   -2,    1  ) \\
\pm  \frac{V}3( 0,   0,  1, 2,  0 ) \ + \  
 \fracnone  (-3,    1,    2,    2,   -2  ) \\
\pm  \frac{V}3( 0,   0,  1, 2,  0 ) \ + \  
 \fracnone  (4,   -2,  -4,    1,    1  ) \\
\pm  \frac{V}3(1,  0,  2,  0,   0 ) \ + \  
 \fracnone  ( 3,  -6,    2,    2,   -1  ) \\
\pm  \frac{V}3(1,  0,  2,  0,   0 ) \ + \  
 \fracnone  (4,  -6,    1,    2,   -1  ) \\
\pm  \frac{V}3(1,  0,  2,  0,   0 ) \ + \  
 \fracnone  (4,   -3,    1,  -4,    2  ) \\
\pm  \frac{V}3( 0,   0,  1, 1, 1 ) \ + \  
 \fracnone  ( 1,  -6,    2,   6,   -3  ) \\
 \textbf{\quad\qquad\qquad Index 6:}\\
\pm  \frac{V}6(1,   0,   0,  4, 1 ) \ + \  
 \fracnone  ( 1,  -3,    1,   2,   -1  ) \\
\end{array}
\end{array}
\]
\medskip
\caption{The 17 non-primitive $1$-parameter families of empty $4$-simplices. 
$V$ needs to be a multiple of the index $I\in \{2, 3, 4, 6\}$ and satisfy certain additional restrictions, specified in Table~\ref{tab:nonprimitive-bipyr}.
}
\label{table:nonprimitive}
\hrule\end{table}

Some comments about the statement of Theorem~\ref{thm:main}:

\begin{itemize}
\item The classification is not irredundant. The same empty simplex may belong to several families, since it may project to different lower dimensional configurations. 

\item By Corollary~\ref{coro:cyclic}, the parameters $\alpha$ and $\beta$ are important only modulo $V$; also, multiplying a $5$-tuple by a unit modulo $V$ does not change the simplex.

\end{itemize}

 In all the families we have stated some restrictions on the volume $V\in \N$ or the parameters $\alpha,\beta\in \Z_V$ (e.g., the condition $\gcd(V,\alpha,\beta)=1$ when $k=1$). 
Without these restrictions the $5$-tuples represent \emph{hollow} cyclic $4$-simplices.
That these restrictions are necessary for emptyness is part of Theorem~\ref{thm:main}, and their sufficiency is shown in Propositions~\ref{prop:sufficent1and2}, \ref{prop:primitiverestrictions} and \ref{prop:nonprimitiverestrictions}. That is, we have the following converse of Theorem~\ref{thm:main}:

\begin{theorem}
\label{thm:sufficient}
All the cyclic $4$-simplices described in Theorem~\ref{thm:main} are empty.
\end{theorem}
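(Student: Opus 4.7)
My approach rests on a combinatorial criterion for emptiness of a cyclic $d$-simplex $P$ of volume $V$ with tuple $b = (b_0, \dots, b_d) \in {\Z_V}^{d+1}$: $P$ is empty if and only if, for every $\lambda \in \{1, \dots, V-1\}$,
\[
\sum_{i=0}^{d} (\lambda b_i \bmod V) \ne V,
\]
where each residue is taken in $\{0, 1, \dots, V-1\}$. This follows directly from the discussion preceding Corollary~\ref{coro:cyclic}: the residues scaled by $1/V$ are the barycentric coordinates of a lift of $\lambda g \in G_P$ (with $g$ the generator encoded by $b$), which produces a non-vertex lattice point of $P$ precisely when these coordinates are non-negative and sum to $1$; otherwise the sum is a strictly larger multiple of $V$, forcing the lift outside $P$.

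For the $k=1$ family with tuple $(\alpha+\beta, -\alpha, -\beta, -1, 1)$, the last two entries contribute $\lambda + (V - \lambda) = V$ to the residue sum for any $\lambda \in \{1, \dots, V-1\}$. The first three entries sum to $0$ modulo $V$, so their residues sum to an element of $\{0, V, 2V\}$, and the total equals $V$ exactly when this partial sum is $0$, i.e., when $\lambda\alpha \equiv \lambda\beta \equiv 0 \pmod V$. A non-trivial such $\lambda$ exists if and only if the annihilator of $\langle \alpha, \beta \rangle \subseteq \Z_V$ is non-trivial, equivalently $\gcd(\alpha, \beta, V) > 1$. The stated hypothesis $\gcd(\alpha, \beta, V) = 1$ is therefore exactly what is needed.

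For the two $k=2$ families and the forty-six one-parameter families at $k=3$ in Tables~\ref{table:primitive} and~\ref{table:nonprimitive}, I would apply the same residue-sum criterion. Each tuple has rigid structure---small constant coordinates in the primitive cases, or a shift by $\frac{V}{I}$ times a fixed $0/1$ vector for the nonprimitive ones---so each residue $\lambda b_i \bmod V$ is a piecewise-linear function of $\lambda$ with breakpoints at multiples of $V/c$ for small constants $c$ depending on the entries and on the index $I$. The plan is to split $\lambda \in \{1, \dots, V-1\}$ into classes modulo a common small divisor; within each class the sum of residues is a linear function of $\lambda$, and the equation ``sum $= V$'' reduces to a single linear congruence whose solvability would force $V$ to violate one of the divisibility conditions listed in Table~\ref{table:primitive} or Table~\ref{tab:nonprimitive-bipyr}. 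For the nonprimitive families, the $\frac{V}{I}$-shift reduces $\lambda$ effectively modulo $V/I$, which is what encodes the index-$I$ constraint on $V$.

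Finally, the finitely many sporadic simplices at $k=3$ ($V \le 72$, per Proposition~\ref{prop:sporadic-k=3}) and $k=4$ ($V \le 419$, per Theorem~\ref{thm:sporadic}) are verified empty by direct evaluation of the same criterion. The main obstacle is the bookkeeping for the forty-six parametric $k=3$ families: there is no single unified argument, each family requiring its own modular case analysis. I expect to contain the workload by exploiting the ${\Z_V}^\times$-action on tuples and the permutation action on coordinates from Corollary~\ref{coro:cyclic}, both of which preserve emptiness and identify many nominally distinct cases, as well as by reducing each piecewise-linear sum to its finitely many ``active'' subregions.
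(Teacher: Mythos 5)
Your residue-sum criterion is correct: since every class of $\Lambda/\Lambda_P$ is $\lambda g$ for some $\lambda$, and a non-vertex lattice point of $P$ must have all barycentric coordinates in $[0,1)$ summing to $1$, emptiness is indeed equivalent to $\sum_i(\lambda b_i \bmod V)\ne V$ for all $\lambda\in\{1,\dots,V-1\}$. Your treatment of the $k=1$ family is complete and correct, and reaches the same conclusion as the paper (that $\gcd(\alpha,\beta,V)=1$ is exactly the emptiness condition).

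The gap is that for the two $k=2$ families and the $46$ one-parameter $k=3$ families you give only a plan, and that is where essentially all of the content of the theorem lies. These are not finite checks: the $k=2$ families carry a free parameter $\alpha\in{\Z_V}^\times$, and already the primitive tuple $(1,-2,\alpha,-2\alpha,1+\alpha)$ requires a genuine argument (one must show that the configuration minimizing the residue sum, namely $2x<V$, $2y<V$, $x+y\ge V$ with $x=\lambda\bmod V$, $y=\lambda\alpha\bmod V$, is infeasible); for tuples such as $(15,10,6,-1,-30)$ or the index-$6$ family $\frac{V}{6}(1,0,0,4,1)+(1,-3,1,2,-1)$ the piecewise-linear bookkeeping over all residues of $\lambda$ becomes substantial, and you have not verified that your ``single linear congruence'' reduction actually closes in each of the $46+2$ cases. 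The paper avoids this entirely by a structural reduction you do not use: all these simplices are \emph{hollow} by construction (they project to hollow configurations, Corollary~\ref{coro:parameters}), so a hollow simplex is empty if and only if its five facets are empty tetrahedra; by White's theorem this becomes the condition of Proposition~\ref{prop:emptyfacets}, a statement about the $5$-tuple modulo the facet volumes $V_i=\gcd(V,b_i)$, which are small ($\le 13$, and $\le 3$ in the nonprimitive cases, where the check is even automatic because any four nonzero residues mod $2$ or $3$ summing to zero form two opposite pairs). That reduction is what turns the $46$-family verification into the routine table checks of Propositions~\ref{prop:sufficent1and2}, \ref{prop:primitiverestrictions} and~\ref{prop:nonprimitiverestrictions}. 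To complete your proof you must either carry out the residue-sum analysis family by family, or import this facet-based reduction.
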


In order to have a complete classification we need to enumerate the finitely many exceptions of the cases $k=3,4$. For this, in Section \ref{sec:k=4} we first prove an upper bound for their volume (Theorem~\ref{thm:bound}) and then enumerate all empty simplices up to that volume. This yields:

\begin{theorem}
\label{thm:sporadic}
Apart of the infinite families described in Theorem~\ref{thm:main}, there are exactly 2461 sporadic empty $4$-simplices.
Their volumes range from 24 to 419 and the number of them for each volume is as listed in Table~\ref{table:sporadic}.
\end{theorem}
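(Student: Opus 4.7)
The plan is to combine an a priori upper bound on the volume of the sporadic simplices with an exhaustive computer enumeration, so the argument splits naturally into three stages.

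First, I would separate the sporadic simplices according to the invariant $k\in\{3,4\}$ of Theorem~\ref{thm:main} and bound $V$ in each case. For $k=3$, the simplex admits a hollow projection onto a $3$-dimensional hollow lattice polytope. Using the classification of hollow $3$-polytopes, one analyses, for each combinatorial type, which lifts produce empty $4$-simplices: generically these populate the $29+17$ families of Tables~\ref{table:primitive} and~\ref{table:nonprimitive}, plus a short list of exceptional lifts whose volume is bounded (this is Proposition~\ref{prop:sporadic-k=3}, giving $V\le 72$ in the $k=3$ sporadic case).

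Second, for $k=4$ the simplex has no lower-dimensional hollow projection. Here the central ingredient is the lattice width of the simplex. The plan is to prove Theorem~\ref{thm:bound}, a quantitative finiteness statement: an empty $4$-simplex with no hollow projection of lower dimension has volume at most $419$. The underlying mechanism is that a ``fully $4$-dimensional'' hollow simplex has bounded lattice width in every direction (via the flatness theorem and explicit width bounds for hollow simplices), and the volume can be estimated from above in terms of products of widths along $4$ independent primitive functionals. This is the step I expect to be the main obstacle, because producing a usable explicit constant (and not merely abstract finiteness, as follows for instance from results of Nill--Ziegler) requires a careful interplay between the width analysis and the combinatorial constraint that no coordinate projection of $P$ be hollow.

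Third, once the uniform bound $V\le 419$ is in place, enumeration proceeds via the tuple description of Corollary~\ref{coro:cyclic}. For each $V\le 419$ I would iterate over all $5$-tuples in $\Z_V^5$ modulo the action of coordinate permutations and multiplication by units, and for each candidate tuple verify emptiness by checking that every nonzero class in $G_P$ has at least one vanishing barycentric coordinate modulo $V$ (equivalently, that no nontrivial multiple of the tuple represents an interior lattice point). Pruning can be done by the lattice width, by the presence of unimodular facets (Remark~\ref{remark:quintuple}), and by discarding tuples whose projections already belong to the $k\le 3$ families. Finally, one removes from the resulting list of empty $4$-simplices all members of the infinite families of Theorem~\ref{thm:main} and tallies the remainder, yielding the count $2461$, the volume range $[24,419]$, and the per-volume distribution recorded in Table~\ref{table:sporadic}. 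Apart from the $k=4$ bound, the remaining difficulty is purely computational: organising the enumeration so that it is both exhaustive and tractable at volumes in the hundreds, which is handled by the width-based pruning above.
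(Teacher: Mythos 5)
Your overall architecture (volume bound for the exceptional cases, then exhaustive enumeration, then pruning of the infinite families) is the same as the paper's, and your treatment of the $k=3$ exceptions via Proposition~\ref{prop:sporadic-k=3} is correct. But there is a genuine gap in the $k=4$ step. You state Theorem~\ref{thm:bound} as giving the a priori bound $V\le 419$; in fact $419$ is the \emph{empirically observed} maximum volume of a sporadic simplex, i.e.\ an output of the enumeration, not an input to it. The bound that can actually be proved is $\Vol(P)\le 5184$ for hollow $4$-simplices of width two not projecting to a hollow $3$-polytope (width $\ge 3$ being handled separately by the earlier classification of wide empty $4$-simplices), and the enumeration must therefore be carried out up to that provable bound (the paper in fact enumerates up to volume $7600$). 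Taking $V\le 419$ as the enumeration range before the enumeration has been done is circular: if the bound you prove were really $419$, fine, but nothing in your sketch produces that constant.

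Relatedly, the mechanism you propose for proving the bound does not work as stated. A hollow $4$-simplex with no lower-dimensional hollow projection does \emph{not} have bounded lattice width ``in every direction'': the flatness theorem controls only the minimal width, and widths along generic functionals are unbounded (already empty simplices of width one have arbitrarily large volume). So one cannot bound the volume by a product of widths along four independent functionals without further input. The paper's argument instead fixes a width-two direction, takes the middle slice $Q=P\cap\{x_4=0\}$, bounds $\Vol(P)\le 16\Vol(Q)$ by a Schwarz-symmetrization estimate (Lemma~\ref{lemma:slice}, Corollary~\ref{coro:PtoQ}), and then recursively analyses the half-integral hollow $3$-polytope $Q$ (which cannot project to a hollow polygon), splitting on the width of $Q$ and using the Averkov--Wagner area bounds for hollow polygons. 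You would need to supply an argument of this kind (or some genuine use of covering minima) to obtain any explicit constant; as written, this step of your proposal would fail.
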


\begin{table}
\hrule
\small
\setlength\tabcolsep{-1.5pt}
\renewcommand{\arraystretch}{1.2}

\centerline{
\begin{tabular}{c|c|c|c|c|c|c|c|c}
$\begin{array}{rr}V& \# \\ \hline
    24  &  1\\
    27  &  1\\
    29  &  3\\
    30  &  2\\
    31  &  2\\
    32  &  3\\
    33  &  4\\
    34  &  5\\
    35  &  3\\
    37  &  6\\
    38  &  8\\
    39  &  9\\
    40  &  1\\
    41  &  14\\
    42  &  5\\
    43  &  20\\
    44  &  8\\
    45  &  6\\
    46  &  7\\
    47  &  30\\
    48  &  5\\
    49  &  17\\
    50  &  8\\
    51  &  16\\
    52  &  6\\
    53  &  38\\
    54  &  11\\
    55  &  20\\
    56  &  3\\
\end{array}$&
$\begin{array}{rr}V& \# \\ \hline
    57  &  16\\
    58  &  13\\
    59  &  51\\
    60  &  4\\
    61  &  38\\
    62  &  26\\
    63  &  17\\
    64  &  9\\
    65  &  27\\
    66  &  3\\
    67  &  41\\
    68  &  13\\
    69  &  26\\
    70  &  4\\
    71  &  50\\
    72  &  3\\
    73  &  44\\
    74  &  18\\
    75  &  22\\
    76  &  14\\
    77  &  19\\
    78  &  3\\
    79  &  55\\
    80  &  7\\
    81  &  18\\
    82  &  13\\
    83  &  60\\
    84  &  7\\
    85  &  27\\
\end{array}$&
$\begin{array}{rr}V& \# \\ \hline
    86  &  11\\
    87  &  24\\
    88  &  5\\
    89  &  55\\
    90  &  6\\
    91  &  18\\
    92  &  9\\
    93  &  17\\
    94  &  12\\
    95  &  35\\
    96  &  3\\
    97  &  46\\
    98  &  9\\
    99  &  13\\
    100  &  8\\
    101  &  41\\
    102  &  3\\
    103  &  51\\
    104  &  8\\
    105  &  7\\
    106  &  8\\
    107  &  54\\
    108  &  5\\
    109  &  44\\
    110  &  5\\
    111  &  13\\
    112  &  2\\
    113  &  40\\
    114  &  4\\
\end{array}$&
$\begin{array}{rr}V& \# \\ \hline
    115  &  21\\
    116  &  11\\
    117  &  10\\
    118  &  9\\
    119  &  22\\
    120  &  3\\
    121  &  18\\
    122  &  9\\
    123  &  17\\
    124  &  8\\
    125  &  25\\
    127  &  24\\
    128  &  9\\
    129  &  17\\
    130  &  2\\
    131  &  29\\
    132  &  5\\
    133  &  14\\
    134  &  8\\
    135  &  6\\
    136  &  6\\
    137  &  28\\
    138  &  2\\
    139  &  37\\
    140  &  5\\
    141  &  6\\
    142  &  9\\
    143  &  13\\
    144  &  1\\
\end{array}$&
$\begin{array}{rr}V& \# \\ \hline
    145  &  14\\
    146  &  5\\
    147  &  10\\
    148  &  7\\
    149  &  26\\
    150  &  2\\
    151  &  19\\
    152  &  6\\
    153  &  9\\
%
    154  &  3\\
    155  &  12\\
    156  &  2\\
    157  &  11\\
    158  &  10\\
    159  &  9\\
    160  &  3\\
    161  &  13\\
    163  &  17\\
    164  &  6\\
    165  &  1\\
    166  &  7\\
    167  &  18\\
    168  &  3\\
    169  &  13\\
    170  &  2\\
    171  &  6\\
    172  &  3\\
    173  &  15\\
    174  &  3\\
\end{array}$&
$\begin{array}{rr}V& \# \\ \hline
    175  &  8\\
    176  &  4\\
    177  &  5\\
    178  &  2\\
    179  &  21\\
    180  &  1\\
    181  &  13\\
    182  &  5\\
    183  &  5\\
    184  &  5\\
    185  &  7\\
    186  &  2\\
    187  &  7\\
    188  &  5\\
    189  &  2\\
    190  &  2\\
    191  &  8\\
    192  &  1\\
    193  &  12\\
    194  &  3\\
    196  &  4\\
    197  &  13\\
    199  &  11\\
    200  &  4\\
    201  &  3\\
    202  &  2\\
    203  &  7\\
    204  &  1\\
    205  &  4\\
\end{array}$&
$\begin{array}{rr}V& \# \\ \hline
    206  &  4\\
    207  &  2\\
    208  &  1\\
    209  &  10\\
    211  &  4\\
    212  &  2\\
    213  &  3\\
    214  &  2\\
    215  &  5\\
    216  &  1\\
    218  &  5\\
    219  &  4\\
    220  &  1\\
    221  &  3\\
    222  &  1\\
    223  &  7\\
    225  &  2\\
    226  &  4\\
    227  &  9\\
    229  &  6\\
    230  &  3\\
    232  &  1\\
    233  &  9\\
    234  &  1\\
    235  &  3\\
    237  &  1\\
    238  &  2\\
    239  &  3\\
    241  &  6\\
\end{array}$&
$\begin{array}{rr}V& \# \\ \hline
    244  &  2\\
    245  &  3\\
    247  &  3\\
    248  &  3\\
    249  &  2\\
    250  &  1\\
    251  &  5\\
    254  &  1\\
    256  &  2\\
    257  &  3\\
    259  &  2\\
    261  &  1\\
    263  &  7\\
    265  &  1\\
    267  &  1\\
    268  &  1\\
    269  &  2\\
    271  &  4\\
    272  &  1\\
    274  &  1\\
    275  &  1\\
    278  &  2\\
    283  &  2\\
    287  &  1\\
    289  &  4\\
    290  &  1\\
    291  &  1\\
    292  &  1\\
    293  &  5\\
\end{array}$&
$\begin{array}{rr}V& \# \\ \hline
    299  &  2\\
    304  &  1\\
    308  &  1\\
    310  &  1\\
    311  &  1\\
    313  &  1\\
    314  &  1\\
    317  &  1\\
    319  &  2\\
    321  &  1\\
    323  &  1\\
    331  &  1\\
    332  &  1\\
    334  &  2\\
    335  &  1\\
    347  &  1\\
    349  &  2\\
    353  &  1\\
    355  &  1\\
    356  &  1\\
    376  &  1\\
    377  &  2\\
    397  &  1\\
    398  &  1\\
    419  &  1\\
    \\ \\ \\ \\
\end{array}$
\end{tabular}
\bigskip}

\caption{Statistics of the 2461 sporadic empty $4$-simplices by volume. The complete list of pairs $[V,  (b_0, b_1, b_2, b_3, b_4)]$ for them, is in the first author's website:~\url{https://personales.unican.es/iglesiasvo/empty4simplices}, and as an ancillary file supplementing this arXiv version.
}
\label{table:sporadic}
\hrule\end{table}

Summing up, the whole classification of empty $4$-simplices consists of 1 three-parameter family (case $k=1$), 2 two-parameter families (case $k=2$), 1+17=46 one-parameter families (part of case $k=3$) and 2461 individual examples.

\medskip

The structure of the proof (and of the paper) is as follows. In Section \ref{sec:hollow} we show a general scheme to classify hollow polytopes in fixed dimension into families, and a more explicit one for the case of cyclic simplices. In Sections~\ref{sec:k<=2} and \ref{sec:k=3} we implement this approach for empty $4$-simplices that project to hollow $3$-polytopes, thus proving the cases $k\le3$ of Theorem~\ref{thm:main}.
The derivation of cases $k=1$ and $2$ is quite easy, but the case $k=3$ requires a close look at the classification of hollow $3$-polytopes, recently developed in \cite{AverkovKrumpelmannWeltge}, together with an analysis of hollow lifts of hollow polytopes using ideas from \cite{BlancoHaaseHoffmanSantos}.

The case $k=4$ of Theorem~\ref{thm:main} and its more explicit version in Theorem~\ref{thm:sporadic} are proved in Section~\ref{sec:k=4}. 
In it, using the theory of successive minima and covering minima for convex bodies, we prove an upper bound of $5184$ for the volume of empty $4$-simplices that do not project to lower dimension (Theorem~\ref{thm:bound}). More precisely, the case where $P$ has width at least three was already studied in \cite{IglesiasSantos}, and here we look at the case of width two.
Remember that the \emph{(lattice) width} of a convex body $K$ with respect to a lattice $\Lambda$ is the minimum 
length of the interval $f(K)$ where $f:\R^d\to \R$ runs over all lattice functionals in $\Lambda^*\setminus \{0\}$. The width of a lattice polytope $P$ is a nonnegative integer that equals zero if and only if $P$ is lower-dimensional and equals one if and only if $P$ projects to a unit segment (the case $k=1$ in Theorem~\ref{thm:main}).
The width of hollow $d$-simplices is known to be bounded by $O(d\log d)$~\cite{Banaszczyk_etal} and empty $d$-simplices of width $(1+\epsilon) d$ for an $\epsilon>0$ and arbitrarily large $d$ are constructed in \cite{CodenottiSantos2}.

Once the volume bound is proven, a brute force enumeration of all empty $4$-simplices up to that bound gives the statements. Details on how we implemented this enumeration appear in \cite{IglesiasSantos}. The only new ingredient is pruning the output to discard all empty $4$-simplices from the infinite families of Theorem~\ref{thm:main}.

In Section~\ref{sec:facets} we summarize what volumes can arise for the facets of empty $4$-simplices in the different cases of Theorem~\ref{thm:main}. In particular, we observe that every empty $4$-simplex has at least two unimodular facets and very few of them have exactly two (Corollary~\ref{coro:2unimodular}). Part of the interest in looking at facets volume comes from the fact that for an empty $4$-simplex knowing its volume $V$ and surface area $S$ (by which we mean the sum of normalized volumes of the five facets) is equivalent to knowing the Ehrhart polynomial and $h^*$-vector. Indeed, as seen in Proposition~\ref{prop:hstar}:
\[
h^*=(h^*_0, h^*_1, h^*_2, h^*_3, h^*_4) = \left(1,0, \frac{V+S}2-3, \frac{V-S}2+2, 0\right).
\]

Some of the techniques and intermediate results in our proof of Theorem~\ref{thm:main} are valid for all \emph{hollow} $4$-simplices, not only empty ones. 
In the final Section~\ref{sec:hollow2} we state a classification result for hollow $4$-simplices (Theorem~\ref{thm:mainhollow}) along the same lines as Theorem~\ref{thm:main} yet much less explicit. One of the complications is that they are no longer cyclic in general (e.g., the hollow simplex $4\Delta_4$ has quotient group ${\Z_4}^4$); also, although the volume bound in Theorem~\ref{thm:bound} holds for all hollow $4$-simplices with $k=4$, the enumeration of hollow $4$-simplices up to that bound is computationally much more difficult than that of empty ones.

\medskip
Let us put our results and techniques in context. As already mentioned, the classification of empty $4$-simplices was started in 1988 by S.~Mori, D.~R.~Morrison, and I.~Morrison~\cite{MMM}. At that time it was not known that all empty $4$-simplices are cyclic, but they were only interested in those of prime volume, for which cyclicity is obvious. 
This allowed them to specify $4$-simplices via their $5$-tuples.
They showed the existence of the 1+1+29 \emph{primitive} families with three, two and one parameter that we describe in Theorem~\ref{thm:main} and conjectured, based on an exhaustive enumeration up to $V=1600$, that all empty $4$-simplices of \emph{prime} volume belonged to them, with finitely many exceptions of volume $V\le419$. This  conjecture, without the volume bound, was proved by Sankaran~\cite{Sankaran} (although the published version of his paper omits some cases of a lengthy case study) and later, with simpler arguments, by Bober \cite{Bober}. 

Another empirical exploration of empty $4$-simplices was done by Haase and Ziegler~\cite{HZ00} who enumerated them, without the restriction to prime volumes, for $V\le 1000$. Their main interest was not in classifying them but in bounding their width. 
They found 178 empty $4$-simplices of width three (with volumes between 49 and 179) and a single one of width four (with volume 101). They conjectured that these were all the empty $4$-simplices of width larger than two. These conjectures were proven in our previous paper~\cite{IglesiasSantos}.

In 2011, Barile, Bernardi, Borisov and Kantor~\cite{BBBK11} proved that all empty $4$-simplices are cyclic (Theorem~\ref{thm:cyclic} here). 
They also claimed to have proved that all but finitely many empty $4$-simpices have width one or two, but their proof implicitly assumed that cyclicity (weaker than primality of volume) was enough for the classification of empty $4$-simplices with $k\le 3$ to contain only the $1+1 +29$ infinite families found by Mori et al., a statement  that was proved wrong by Blanco et al.~\cite{BlancoHaaseHoffmanSantos}. 

\begin{table}
\hrule
\small
\setlength\tabcolsep{-1.5pt}
\renewcommand{\arraystretch}{1.2}

\centerline{
\begin{tabular}{c|c|c}
$\begin{array}{rr}
V & \# \\ \hline
17  &  \bf 9\\
19  &  \bf 13\\ 
23  &  \bf 28\\ 
29  &  \bf 39\\ 
31  &  \bf 30\\ 
37  &  \bf 50\\ 
41  &  \bf 76\\ 
43  &  \bf 110\\ 
47  &  \bf 100\\ 
53  &  \bf 195\\ 
59  &  \bf 260\\ 
61  &  186\\ 
67  &  205\\
71  &  250\\
73  &  220\\
79  &  275\\
83  &  300\\
89  &  275\\
97  &  230\\
101  &  201\\ 
103  &  255\\
\end{array}$&
$\begin{array}{rr}
V & \# \\ \hline
107  &  270\\
109  &  220\\
113  &  200\\
127  &  120\\
131  &  145\\
137  &  140\\
139  &  185\\
149  &  130\\
151  &  95\\
157  &  55\\
163  &  85\\
167  &  90\\
173  &  75\\
179  &  105\\
181  &  65\\
191  &  40\\
193  &  60\\
197  &  65\\
199  &  55\\
211  &  20\\
223  &  35\\
\end{array}$&
$\begin{array}{rr}
V & \# \\ \hline
227  &  45\\
229  &  30\\
233  &  45\\
239  &  15\\
241  &  30\\
251  &  25\\
257  &  15\\
263  &  35\\
269  &  10\\
271  &  20\\
283  &  10\\
293  &  25\\
311  &  5\\
313  &  5\\
317  &  5\\
331  &  5\\
347  &  5\\
349  &  10\\
353  &  5\\
397  &  5\\
419  &  5\\
\end{array}$
\end{tabular}
\qquad\quad
\begin{tabular}{c|c|c}
$\begin{array}{rr}
V & \# \\ \hline
     17  &  \bf 0\\ 
     19  &  \bf 0\\ 
     23  &  \bf 0\\ 
     29  &  \bf 15\\ 
     31  &  \bf 10\\ 
     37  &  \bf 30\\ 
     41  &  \bf 66\\ 
     43  &  \bf 100\\ 
     47  &  \bf 150\\ 
     53  &  \bf 190\\ 
     59  &  \bf 255\\ 
     61  &  186\\ 
     67  &  205\\
     71  &  250\\
     73  &  220\\
     79  &  275\\
     83  &  300\\
     89  &  275\\
     97  &  230\\
    101  &  201\\ 
    103  &  255\\
\end{array}$&
$\begin{array}{rr}
V & \# \\ \hline
    107  &  270\\
    109  &  220\\
    113  &  200\\
    127  &  120\\
    131  &  145\\
    137  &  140\\
    139  &  185\\
    149  &  130\\
    151  &  95\\
    157  &  55\\
    163  &  85\\
    167  &  90\\
    173  &  75\\
    179  &  105\\
    181  &  65\\
    191  &  40\\
    193  &  60\\
    197  &  65\\
    199  &  55\\
    211  &  20\\
    223  &  35\\
\end{array}$&
$\begin{array}{rr}
V & \# \\ \hline
    227  &  45\\
    229  &  30\\
    233  &  45\\
    239  &  15\\
    241  &  30\\
    251  &  25\\
    257  &  15\\
    263  &  35\\
    269  &  10\\
    271  &  20\\
    283  &  10\\
    293  &  25\\
    311  &  5\\
    313  &  5\\
    317  &  5\\
    331  &  5\\
    347  &  5\\
    349  &  10\\
    353  &  5\\
    397  &  5\\
    419  &  5\\
\end{array}$
\ \ 
\end{tabular}
\medskip
}

\caption{ 
Number of sporadic terminal quotient singularities of prime volume. Left: the results from~\cite[Table 1.14]{MMM}. Right: our results; unless a sysmmetric empty simplex of that volume exists, each entry is five times the corresponding entry from  Table~\ref{table:sporadic}. Discrepant entries are in boldface. }
\label{table:tqs}
\hrule
\end{table}

We have compared our computation of sporadic examples with the one by Mori et al., who list the number of them for each prime volume up to $419$; see the left part of Table~\ref{table:tqs}, which is Table 1.14 in~\cite{MMM}. The right part of the same table is our count of them. This is not exactly the same count as in Table~\ref{table:sporadic} since we are here counting terminal quotient singularities rather than simplices: each simplex is counted as many times as orbits of vertices are there in its affine-unimodular symmetry group.

As seen in the table, there are some discrepancies between our results and those from~\cite{MMM}. We approached the authors of~\cite{MMM} about this issue and I.~Morrison (personal communication) told us that they no longer have their full output, so it is not possible to verify their numbers, or to look at what particular simplices produce the discrepancies. 
Observe that, when there is a discrepancy, the value in \cite{MMM} is higher than ours (with a single exception for $V=47$ that might  be a typographic error). Our guess is that their mistake was not in the enumeration part but in the search for redundancies, where $5$-tuples defining isomorphic simplices may look different, specially when $V$ is not big with respect to the other entries in the $5$-tuple. This guess is consistent with the facts that all discrepancies have $V < 60$ and discrepancies are bigger for smaller values of $V$. Most entries in both tables, and most discrepancies between the two tables, are multiples of five since most simplices have no symmetries.

\subsection*{Acknowledgements}
We thank M\'onica Blanco for the computations summarized in Lemma~\ref{lemma:dim3enum}, and Martin Henk for suggesting the use of Schwarz symmetrization to prove Lemma~\ref{lemma:slice}. 

\section{How to classify hollow polytopes}
\label{sec:hollow}

\subsection{A general classification scheme}

A lattice polytope $P\subset \R^d$ (with respect to a lattice $\Lambda\cong \Z^d)$ is \emph{hollow} if $P\cap \Lambda \subset \partial P$ and \emph{empty} if $P\cap \Lambda=\vertices(P)$. 
If there is a lattice projection $\pi: \R^n\to \R^k$ sending a polytope $P$ to a polytope $Q$ and $Q$ is hollow with respect to the projected lattice $\pi(\Lambda)$, then $P$ is automatically hollow; (the same is not true for empty). In this situation we say that $\pi$, or $Q$, is a \emph{hollow projection} of $P$, and that $P$ is a \emph{lift} of $Q$.
The starting point to a general classification of hollow lattice polytopes is the following result of Nill and Ziegler:

\begin{theorem}[Nill-Ziegler~\protect{\cite[Thm.~1.2]{NillZiegler}}]
\label{thm:NillZiegler}
For each dimension $d$ there is only a finite number of hollow $d$-polytopes that do not  project onto a hollow  $(d - 1)$-polytope.
\end{theorem}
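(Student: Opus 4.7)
The plan is to show that such a $P$ has volume bounded by a constant $V(d)$ depending only on $d$. Finiteness then follows from the classical fact that, in each fixed dimension, lattice polytopes of bounded volume form finitely many unimodular equivalence classes.

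\textbf{Step 1: width bound.} First, I would apply Khintchine's flatness theorem: there exists $w(d)$ such that every hollow $d$-dimensional convex body has lattice width at most $w(d)$. Let $\phi \in \Lambda^*$ be a primitive lattice functional realizing the width $w \le w(d)$ of $P$, and let $\pi\colon \R^d \to \R^{d-1}$ be the corresponding lattice projection along its dual direction. Then $P \subset \{0 \le \phi \le w\}$, and $Q := \pi(P)$ is a lattice $(d-1)$-polytope in the quotient lattice $\pi(\Lambda)$. Note in particular that every slice $P_t := P \cap \{\phi = t\}$ embeds into $Q$ via $\pi$, so $\Vol(P) \le w \cdot \Vol(Q)$, and the task reduces to bounding $\Vol(Q)$.

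\textbf{Step 2: a short fiber over every interior lattice point.} By hypothesis $Q$ is not hollow, so it contains a lattice point $y$ in its relative interior. The preimage $\pi^{-1}(y) \cap P$ is a segment on a lattice line; hollowness of $P$ forces every lattice point on this line to lie outside $\mathrm{int}(P)$, so that segment has length strictly less than $1$. More generally, \emph{every} interior lattice point of $Q$ has short fiber, which should drastically restrict how large $Q$ can be.

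\textbf{Step 3: inductive bound on $\Vol(Q)$.} I would proceed by induction on $d$. The base case $d = 2$ is the known fact that the only hollow lattice polygon not projecting to a unit segment is $2\Delta_2$. For the inductive step, the intermediate slices $P_i := P \cap \{\phi = i\}$, $i = 1, \dots, w-1$, are each a hollow $(d-1)$-polytope (any interior lattice point of $P_i$ in its affine span would be interior to $P$). Applying the induction hypothesis to each $P_i$, either $\Vol(P_i) \le V(d-1)$, or $P_i$ admits a hollow $(d-2)$-projection via some lattice direction $v_i$ in the hyperplane $\{\phi = i\}$. The key sub-claim is that if the $v_i$ can be chosen to lie in a common lattice direction $v$ independent of $i$, those slice-wise collapses assemble into a single hollow $(d-1)$-projection of $P$ along $v$, contradicting the hypothesis. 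Once coherent collapses are ruled out, a bound on $\Vol(Q)$ in terms of $w(d)$ and $V(d-1)$ would follow.

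\textbf{Main obstacle.} The hard part will be the incoherent case in Step 3, where every intermediate slice projects to a hollow $(d-2)$-polytope but via \emph{different} lattice directions $v_i$ that do not align. No single $(d-1)$-projection of $P$ is then hollow, so the contradiction has to be produced indirectly, most likely by exploiting the short-fiber constraint of Step 2 simultaneously at all interior lattice points of $Q$ to bound how $Q$ can grow while admitting many hollow one-dimensional collapses of its sections. Controlling this combinatorial stacking of hollow lifts, in the spirit of \cite{BlancoHaaseHoffmanSantos}, is where the real work lies.
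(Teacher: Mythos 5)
This theorem is not proved in the paper at all: it is imported verbatim from Nill--Ziegler \cite{NillZiegler}, so your sketch has to stand on its own. As a strategy it is reasonable in outline (flatness, projection along a width direction, short fibers over interior lattice points of $Q$), but it contains two genuine gaps that prevent it from being a proof. The first is a type mismatch in Step 3: the intermediate slices $P_i=P\cap\{\phi=i\}$ are hollow, but they are \emph{rational} $(d-1)$-polytopes, not lattice polytopes, so the induction hypothesis --- a statement about lattice polytopes up to lattice equivalence --- simply does not apply to them. Closing the induction requires a version of the theorem for hollow rational polytopes with controlled denominators, or for hollow convex bodies altogether; that is a strictly stronger statement and is not furnished by the inductive hypothesis. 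Section~\ref{sec:k=4} of the paper is a case study of exactly this difficulty: even for $d=4$, width two, and simplices only, handling the half-integral middle slice forces the authors to invoke Schwarz symmetrization (Lemma~\ref{lemma:slice}), the Averkov--Wagner width--area inequalities for hollow convex \emph{bodies}, and a denominator analysis (Lemma~\ref{lemma:polygon_coordinates}). This is why the known proofs of the Nill--Ziegler theorem run through covering-minima/convex-body machinery rather than a slice-by-slice lattice induction.

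The second gap is the one you name yourself: the incoherent case, in which the slices collapse along non-aligned directions, is left entirely open, and it is not a fringe case --- it is precisely case (III.b) of Section~\ref{sec:k=4}, where two integral slices lie in width-one bands of different directions and a separate ad hoc argument (Lemma~\ref{lem:hollow_polygon_bands}) is required even in that very special situation. Two smaller corrections: the fiber over an interior lattice point of $Q$ has lattice length at most $1$, not strictly less than $1$ (its endpoints may be lattice points lying on $\partial P$); and even in the branch where every slice has $\Vol(P_i)\le V(d-1)$ you still owe an inequality of the form $\Vol(Q)\le C(d,w)\cdot\max_i \Vol(P_i)$ before concluding, since a projection can be much larger than any single slice --- this is again essentially Lemma~\ref{lemma:slice} and is not free.
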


To rephrase this statement we introduce the following definition:

\begin{definition}
\label{defi:coarse}
Let $d\in \N$ be fixed and let $Q$ be a $k$-dimensional lattice hollow polytope that does not project to any $(k-1)$-hollow polytope, with $k\le d$. We call  \emph{coarse family of $Q$} the collection of all hollow $d$-polytopes that have $Q$ as a hollow projection.
\end{definition}

\begin{corollary}
\label{coro:coarse}
The hollow $d$-polytopes of any fixed dimension $d$ belong to a finite number of coarse families.
\end{corollary}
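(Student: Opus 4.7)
The plan is to deduce Corollary~\ref{coro:coarse} directly from Theorem~\ref{thm:NillZiegler} in two short steps: first bound the number of possible base polytopes $Q$ that can index a coarse family, and then check that every hollow $d$-polytope actually sits in at least one of them.

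For the first step, observe that the polytopes admissible as a base $Q$ in Definition~\ref{defi:coarse} are, for each fixed dimension $k \in \{1, \dots, d\}$, precisely the $k$-dimensional hollow lattice polytopes that do not project onto any hollow $(k-1)$-polytope. Theorem~\ref{thm:NillZiegler}, applied in dimension $k$, says there are finitely many such $Q$ up to lattice isomorphism. Summing over the finite range $k \in \{1, \dots, d\}$ yields only finitely many coarse families in total, which is the bound we want.

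For the second step, I would produce, for an arbitrary hollow $d$-polytope $P$, a base $Q$ as follows by iterated projection. Set $P_0 := P$; inductively, whenever $P_i$ admits a lattice projection onto a hollow polytope of strictly smaller dimension, let $P_{i+1}$ be any such image, and otherwise stop. Because dimension drops at each step, the process terminates after at most $d$ iterations at some $P_m =: Q$, which by construction is a hollow $k$-polytope (for some $k \le d$) that does not project onto any hollow $(k-1)$-polytope, hence qualifies as a base in Definition~\ref{defi:coarse}. Composing the successive lattice projections gives a single lattice projection $P \to Q$ whose image is $Q$ itself, so $P$ lies in the coarse family of $Q$. In the degenerate case where $P$ itself already admits no hollow projection to lower dimension, one takes $Q = P$ with the identity projection.

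I do not anticipate any real obstacle here, since the corollary is essentially a bookkeeping consequence of Nill--Ziegler. The only minor checks are that the composition of lattice projections remains a lattice projection, and that our iterative process respects hollowness of the intermediate images; both are immediate from the definitions of lattice projection and of hollow polytope.
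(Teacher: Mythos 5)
Your proof is correct and follows essentially the same route as the paper: one coarse family for each of the finitely many non-projecting hollow $k$-polytopes given by Theorem~\ref{thm:NillZiegler}, for $k=1,\dots,d$. The iterated-projection argument you spell out in your second step is left implicit in the paper's one-line proof, but it is the right (and only) way to see that every hollow $d$-polytope lands in some coarse family.
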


\begin{proof}
There is one family for each of the finitely many polytopes of Theorem \ref{thm:NillZiegler}, for each $k=1,\dots,d$.
\end{proof}

\begin{example}
\label{exm:coarse2dim}
A lattice polytope $P$ projects to a hollow $1$-polytope if and only if $P$ has \emph{width one}. That is, if $P$ is contained between two consecutive parallel lattice hyperplanes. It is easy to check that the only hollow $2$-polytope without that property is the second dilation $2\Delta_2$ of a unimodular triangle. Thus, the coarse classification of hollow lattice $2$-polytopes is as follows:
\begin{itemize}
\item The dilated unimodular triangle $2\Delta_2$ is a coarse family with a single element.

\item The lattice polygons of width one form a second family. Each of them is isomorphic to a trapezoid $\{0\}\times[0,a] \cup \{1\}\times [0,b]$ with $a,b\in \Z_{\ge 0}$ and $a+b>0$. (The trapezoid degenerates to a triangle if $a$ or $b$ equal zero).
\end{itemize}
\end{example}

\begin{example}
\label{exm:coarse3dim}
The coarse classification of hollow $3$-polytopes is:
\begin{itemize}
\item The coarse family of width one; each of which can be expressed as a pair of lattice polygons.
\item The coarse family projecting to $2\Delta_2$. As before, these can be written as  the convex hull of six hollow lattice segments $\{p_i\} \times [a_i,b_i]$ where $p_i$, $i=1,\dots, 6$, are the six lattice points in $2\Delta_2$ and $[a_i,b_i]$ is an integer interval. 
\item Each of the finitely many (by Theorem ~\ref{thm:NillZiegler}) hollow $3$-polytopes that do not project to dimension two is a coarse family in itself.
These were enumerated by Averkov et al.~\cite{AverkovWagnerWeismantel,AverkovKrumpelmannWeltge}, who showed that there are 12 maximal ones. See Theorem~\ref{thm:AKW}.

\end{itemize}
\end{example}

Observe that the families just defined may not be disjoint. For example, the Cartesian product of $2\Delta_2$ with a unit segment belongs to the first two families of Example~\ref{exm:coarse3dim}, since it projects both to $2\Delta_2$ and to a unit segment.

We are interested in a finer classification, which takes into account the number of lattice points. A \emph{hollow configuration} is a finite set $S$ of lattice points such that $\conv(S)$ is a hollow polytope.

\begin{definition}
\label{defi:fine}
Let $d\in \N$ be fixed and let $S$ be a configuration of $n$ lattice points (perhaps with repetition) in $\R^k$, with $n>d\ge k$.
Assume that $\conv(S)$ is hollow but it does not project to a hollow $(k-1)$-polytope.
We call  \emph{fine family of $S$} the collection of all hollow $d$-polytopes with $n$ vertices that admit a lattice projection 
sending $\vertices(P)$ to $S$.
\end{definition}

\begin{corollary}
\label{coro:fine}
All hollow $d$-simplices belong to a finite number of fine families. More generally, for each fixed $n$, all hollow $d$-polytopes with $n$ vertices belong to a finite number of fine families.
\end{corollary}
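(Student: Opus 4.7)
My plan is to reduce this statement directly to Theorem~\ref{thm:NillZiegler}, using the fact that a fine family is determined by finite combinatorial data once its ``shape'' (the image polytope $\conv(S)$) is fixed. The corollary is really an easy consequence of Nill--Ziegler plus the discreteness of $\Lambda$; the only thing worth spelling out is why fixing the volume-bounded ambient polytope leaves only finitely many possibilities for the multiset $S$.

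First I would observe that in Definition~\ref{defi:fine} the polytope $Q:=\conv(S)$ is itself a hollow $k$-polytope which, by hypothesis, does not project to any hollow $(k-1)$-polytope. Therefore, running $k$ from $1$ up to $d$ and applying Theorem~\ref{thm:NillZiegler} in each dimension, there are only finitely many isomorphism classes of such $Q$. This gives the ``coarse'' skeleton on top of which the fine classification sits, and it is essentially the content already used in Corollary~\ref{coro:coarse}.

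Next, for each such $Q$, I would note that $Q\cap\pi(\Lambda)$ is a finite set, simply because $Q$ is compact and the lattice is discrete. The configuration $S$ is (by definition) a multiset of $n$ lattice points whose convex hull is $Q$, so $S$ is contained in $Q\cap\pi(\Lambda)$; in particular, the number of admissible $S$ is bounded by $\binom{|Q\cap\pi(\Lambda)|+n-1}{n}$, which is finite once $n$ is fixed. Summing over the (finitely many) dimensions $k\le d$ and the (finitely many) isomorphism classes of $Q$ in each dimension yields the bound for a fixed $n$. The first sentence of the corollary, about hollow $d$-simplices, is then the special case $n=d+1$.

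I do not foresee any real obstacle: the statement is essentially the combination of Nill--Ziegler finiteness with the triviality that a bounded lattice polytope has finitely many lattice points. The only small point to verify is that ``configuration with repetition'' in Definition~\ref{defi:fine} behaves well under the counting above, which it does because a multiset of size $n$ drawn from a finite set is a finite combinatorial object.
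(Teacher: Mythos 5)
Your proposal is correct and follows essentially the same route as the paper: both arguments invoke Theorem~\ref{thm:NillZiegler} for each $k\le d$ and then count fine families by multisubsets of size $n$ of the (finitely many) lattice points of each of the finitely many hollow $k$-polytopes that do not project to dimension $k-1$. Your write-up merely makes explicit the finiteness of $Q\cap\pi(\Lambda)$ and the multiset count, which the paper leaves implicit.
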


\begin{proof}
There is one for each multisubset of size $n$ of the lattice points in each of the finitely many polytopes of Theorem \ref{thm:NillZiegler}, for $k=1,\dots,d$.
\end{proof}

\begin{example}
\label{exm:fine2dim}
There are three fine families of hollow lattice $2$-polytopes:
\begin{itemize}
\item The dilated unimodular triangle $2\Delta_2$ is still a fine family with a single element. The corresponding $S$ has size three (the three vertices of $2\Delta_2$).

\item The lattice polygons of width one fall into two fine families, one projecting to the set $S_1=\{0,1,1\}$ ($n=3, k=1$) and one projecting to the set $\{0,0,1,1\}$ ($n=4, k=1$). Members of the first family are isomorphic to a triangle $\{(0,0)\} \cup \{1\}\times [0,b]$, with $b\in \Z_{\ge 1}$. Members of the second family are trapezoids $\{0\}\times[0,a] \cup \{1\}\times [0,b]$ with $a,b\in \Z_{\ge 1}$.
\end{itemize}
\end{example}

\begin{example}
\label{exm:fine3dim}
There are infinitely many fine families of hollow $3$-polytopes of width one, since they can have arbitrarily many vertices and all polytopes in the same fine family have the same number of vertices, by definition.
\end{example}

One key difference between coarse and fine families is that in the latter we fix the number $n$ of vertices. In particular, if we take $n=d+1$ we are looking at hollow simplices.
Observe that in Example \ref{exm:fine2dim} each fine family is parametrized by $n-k-1$ parameters. 
In the next section we analyze this phenomenon in more detail in the case of interest to us.

Let us finish this section by pointing out that these finiteness results are similar in spirit to Theorem~2.1 in \cite{Borisov-class}, which Borisov derives from the following more general statement of Lawrence~\cite{Lawrence}: for any open subset $U$ of the torus $\T^d:=\R^d/\Z^d$ the family of subgroups of $\T^d$ not intersecting $U$ has finitely many maximal elements. The relation is as follows: let $U$  be the interior of the standard simplex in $\T^d$. Then, discrete subgroups $G\in \T^d$ not meeting $U$ correspond to hollow $d$-simplices $P\subset \R^d$ via the correspondence $P \leftrightarrow G_P:=\Lambda/\Lambda_P$. If $G$ is not discrete (e.g., $G$ corresponds to  positive dimensional linear subspace $V\le \R^d$) then the discrete subgroups of $G$ form a fine family of hollow simplices, in the sense of Definition~\ref{defi:fine}.

\subsection{The case of cyclic simplices}
\label{sec:cyclic}
In this section we relate the $(d+1)$-tuple of a cyclic simplex $P$ to a hollow projection.
Let us fix the following notation:

Let $P=\conv(v_0,\dots,v_d)\subset \R^d$ be a cyclic lattice $d$-simplex of  volume $V$, and let $\Lambda_P$ be the affine lattice generated by its vertices (we assume without loss of generality that $0\in \Lambda_P$). By definition of cyclic simplex, the quotient group $G(P):= \Lambda/\Lambda_P$ is cyclic of order $V$. Let $\pi: \R^d \to \R^k$ be a linear projection and denote
\[
S:=\pi(\vertices(P)) = \{\pi(v_0), \dots, \pi(v_d)\}.
\]
Observe that both $\vertices(P)$ and $S$ are considered as ordered sets, and their ordering corresponds to the order of coordinates in a $(d+1)$-tuple representing $P$.

Let $\Lambda_S$ be the affine lattice generated by $S$, which is a sublattice of $\pi(\Lambda)$. Then  $\pi(\Lambda) / \Lambda_S$ is a cyclic group too, since $\pi$ induces a surjective homomorphism 
\[
\tilde \pi : \Lambda / \Lambda_P \twoheadrightarrow \pi(\Lambda) / \Lambda_S.
\]
Let $I$ be the index of $\Lambda_S$ in $\pi(\Lambda)$ which, by the above remark, divides $V$.
We say that $S$, and the fine family defined by it, are \emph{primitive} if $I=1$; that is, if $\Lambda_S=\pi(\Lambda)$.

We need the following elementary fact about cyclic groups:

\begin{lemma}
\label{lem:cycliclemma}
Let $\pi: \Z_V \to \Z_I$ be a surjective homomorphism between the cyclic groups of orders $V$ and $I$. Then, for every generator $q$ of $\Z_I$ threre is a generator $p$ of $\Z_V$ with $\pi(p)=q$.
\end{lemma}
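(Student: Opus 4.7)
Since $\pi$ is surjective, $I$ divides $V$; write $V=IJ$. Fix a generator $g$ of $\Z_V$. Then $\pi(g)$ is a generator of $\Z_I$, so every generator $q$ of $\Z_I$ can be written as $q=a\,\pi(g)$ for some integer $a$ with $\gcd(a,I)=1$. The plan is to lift $a$ to an integer $c$ with $c\equiv a\pmod I$ and $\gcd(c,V)=1$; then $p:=cg$ is a generator of $\Z_V$ and satisfies $\pi(p)=c\,\pi(g)=a\,\pi(g)=q$.

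The construction of $c$ is a direct application of the Chinese Remainder Theorem. Let $P$ be the product of the primes that divide $V$ but do not divide $I$; then $\gcd(P,I)=1$, so by CRT there exists an integer $c$ with
\[
c\equiv a\pmod I,\qquad c\equiv 1\pmod P.
\]
I claim $\gcd(c,V)=1$. Let $\ell$ be any prime divisor of $V$. If $\ell\mid I$, then $c\equiv a\pmod \ell$, and since $\gcd(a,I)=1$ we have $\ell\nmid a$, whence $\ell\nmid c$. If $\ell\nmid I$, then $\ell\mid P$, so $c\equiv 1\pmod\ell$, again giving $\ell\nmid c$. Thus $c$ is coprime to every prime dividing $V$, i.e.~$\gcd(c,V)=1$, as required.

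No part of the argument is really hard; the only point that might look delicate is ensuring the lifted element is a generator of $\Z_V$ and not merely a preimage of $q$. This is exactly what the CRT step arranges, by handling separately the primes of $V$ that ramify in $I$ (where coprimality with $c$ is forced by $\gcd(a,I)=1$) and the primes of $V$ coprime to $I$ (where we have the freedom to choose $c\equiv 1$).
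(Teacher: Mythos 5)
Your proof is correct, and it takes a genuinely different route from the paper. The paper's proof is a one-liner invoking Dirichlet's theorem on primes in arithmetic progressions: identifying $\pi$ with reduction modulo $I$, it picks $p$ to be any prime in the progression $\{q+nI : n\in\Z\}$ that does not divide $V$; such a prime is coprime to $V$, hence a generator. Your argument instead establishes the underlying elementary fact directly -- namely that the reduction map $(\Z/V\Z)^{*}\to(\Z/I\Z)^{*}$ is surjective when $I\mid V$ -- by splitting the primes of $V$ into those dividing $I$ (where coprimality of $c$ is forced by $\gcd(a,I)=1$) and those not dividing $I$ (where you impose $c\equiv 1$ via CRT). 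Your version is entirely elementary and constructive, avoiding an analytic number theory input for what is at heart a finite group-theoretic statement; the paper's version buys brevity at the cost of invoking a much deeper theorem. Both are complete; the only stylistic caution is the degenerate case where every prime of $V$ divides $I$, so that $P$ is the empty product $P=1$, but then your congruence conditions reduce to $c\equiv a\pmod I$ alone and the argument goes through unchanged.
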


\begin{proof}
Take as $p$ any prime not dividing $V$ from the arithmetic progression $\{q+nI: n\in \Z\}$. Such a prime exists since, by Dirichlet's prime number theorem, the arithmetic progression contains infinitely many primes.
\end{proof}

\begin{proposition}
\label{prop:tuple}
With the above notation, let $q\in \pi(\Lambda)$ be a generator of the quotient group $\pi(\Lambda) / \Lambda_S$. Then:
\begin{enumerate}
\item There is a vector $a \in \frac1I \Z^{d+1}$ such that 
\[
q= \sum_{i=0}^d a_i\pi(v_i),
\quad\text{and}\quad
1= \sum_{i=0}^d a_i.
\]

\item There is a generator $p \in \Lambda$ of the quotient group $\Lambda / \Lambda_P$ such that the barycentric coordinates of $p$ with respect to $\{v_0,\dots,v_d\}$ have the form
\[
a + \frac1V  b,
\]
with $b\in \Z^{d+1}$  the coefficient vector of an integer affine dependence on $S$.
\end{enumerate}
\end{proposition}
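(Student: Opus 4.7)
The plan is a short bookkeeping argument after normalizing so that $v_0=0$, and hence $\pi(v_0)=0$. This identifies the affine lattice $\Lambda_S$ with the linear sublattice of $\pi(\Lambda)$ generated by $\pi(v_1),\dots,\pi(v_d)$, so $\pi(\Lambda)/\Lambda_S$ may be treated as a linear cyclic quotient of order $I$ with no further affine-versus-linear fuss. For part (1), since $q$ has order exactly $I$ in this quotient, there will exist integers $c_1,\dots,c_d$ with $Iq=\sum_{i=1}^d c_i\pi(v_i)$. Setting $a_i:=c_i/I$ for $i\ge 1$ and $a_0:=1-\sum_{i\ge 1}a_i$ then produces a vector $a\in\tfrac{1}{I}\Z^{d+1}$ with $\sum_i a_i=1$ and $\sum_i a_i\pi(v_i)=q$, as required.

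For part (2), I would apply Lemma~\ref{lem:cycliclemma} to the surjection $\tilde\pi:\Lambda/\Lambda_P\twoheadrightarrow\pi(\Lambda)/\Lambda_S$, which between cyclic groups of orders $V$ and $I$ yields a generator $p\in\Lambda$ of $\Lambda/\Lambda_P$ with $\tilde\pi([p])=[q]$, i.e., $\pi(p)-q\in\Lambda_S$. Writing $\pi(p)-q=\sum_{i=1}^d e_i\pi(v_i)$ and subtracting $\sum_{i=1}^d e_i v_i\in\Lambda_P$ from $p$ replaces $p$ by an element of the same class modulo $\Lambda_P$ (hence still a generator of $\Lambda/\Lambda_P$), but now satisfying $\pi(p)=q$ exactly. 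Let $\beta\in\tfrac{1}{V}\Z^{d+1}$ denote the barycentric coordinates of this adjusted $p$ with respect to $(v_0,\dots,v_d)$.

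It remains to verify that $b:=V(\beta-a)$ is the claimed integer affine dependence on $S$. Summing coordinates gives $\sum_i b_i=V(1-1)=0$ and applying $\pi$ componentwise gives $\sum_i b_i\pi(v_i)=V(\pi(p)-q)=0$; integrality of $b$ then follows because $I\mid V$ forces $a\in\tfrac{1}{I}\Z^{d+1}\subseteq\tfrac{1}{V}\Z^{d+1}$, while $\beta\in\tfrac{1}{V}\Z^{d+1}$ by the standard description of lattice points in a volume-$V$ simplex. The only subtle step is the adjustment of $p$ so that $\pi(p)=q$ holds on the nose and not merely modulo $\Lambda_S$; once this is done both parts fall out, and I expect no serious obstacle beyond Lemma~\ref{lem:cycliclemma}.
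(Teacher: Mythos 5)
Your proof is correct and follows essentially the same route as the paper: part (1) from $Iq\in\Lambda_S$ (equivalently $\pi(\Lambda)\le\frac1I\Lambda_S$), and part (2) by lifting $q$ to a generator via Lemma~\ref{lem:cycliclemma} and checking that $V(\beta-a)$ is an integer affine dependence on $S$. Your explicit adjustment of $p$ by an element of $\Lambda_P$ so that $\pi(p)=q$ holds exactly (not just modulo $\Lambda_S$) is a small point the paper's proof leaves implicit, and it is handled correctly.
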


\begin{proof}
For part (1), observe that since $\Lambda_S$ has index $I$ in $\pi(\Lambda)$, we have $\pi(\Lambda) \le \frac1I \Lambda_S$. In particular, the point $q\in \pi(\Lambda)$ can be written as an affine combination, with coefficients in $\frac1I \Z$,  of the points in $S$. The vector $a$ is the vector of coefficients in this dependence.

For part (2), let $p\in \Lambda$ be a generator of $\Lambda/\Lambda_P$ with $\pi(p) = q$, which exists by Lemma~\ref{lem:cycliclemma}.
Let $c= (c_0,\dots, c_d)\in \frac1V  \Z^{d+1}$ be the barycentric coordinates of $p$ with respect to $\{v_0,\dots,v_d\}$. 
That is, $\sum c_i=1$ and $\sum c_i v_i =p$. By construction, $c-a \in  \frac1V  \Z^{d+1}$. The only thing that remains to be shown is that $b:=V(c-a)\in  \Z^{d+1}$ is the coefficient vector of an affine dependence among the $\pi(v_i)$s. This is easy:
\[
\sum_{i=0}^d \left(c - a\right)_i \pi(v_i) =
\pi \left( \sum_{i=0}^d c_i v_i \right) -
\sum_{i=0}^d  a_i  \pi(v_i) =
\pi(p)-q=0.
\]
and 
\[
\sum_{i=0}^d \left(c - a\right)_i  =
 \sum_{i=0}^d c_i  -
\sum_{i=0}^d  a_i =
1-1=0.
\]
\end{proof}

The above statement implicitly gives a parametrization of the fine family of cyclic simplices projecting to $S$. Let us make it more explicit.

\begin{corollary}
\label{coro:parameters}
Let $\Lambda_0$ be a lattice in $\R^k$ and let $S$ be a multiset of $d+1$ lattice points affinely spanning $\R^k$. 
Assume that $\Lambda/\Lambda_S$ is cyclic, of index $I$, and let $a$ be as in part (1) of Proposition~\ref{prop:tuple}. 
Then, the cyclic $d$-simplices of a given volume $V\in I\cdot \N$ and projecting to $S$ are parametrized as having $(d+1)$-tuples
\[
Va +  b,
\]
where $b\in \Z^{d+1}$ runs over all the integer affine dependences of $S$. Moreover, $b$ is only important modulo $V$, and satisfies $\gcd(V,b_0,\dots,b_d)=1$.
\qed
\end{corollary}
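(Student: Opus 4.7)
This corollary is essentially a repackaging of Proposition~\ref{prop:tuple}, so the plan is to read off both directions of the parametrization from it and then settle the two normalisation claims at the end.

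First I would handle the forward direction. Let $P$ be a cyclic $d$-simplex of volume $V$ projecting to $S$; the induced surjection $G_P \twoheadrightarrow \pi(\Lambda)/\Lambda_S$ forces $I \mid V$. Fixing any generator $q$ of $\pi(\Lambda)/\Lambda_S$, I obtain the coefficient vector $a$ from part~(1) of Proposition~\ref{prop:tuple}, and a generator $p$ of $G_P$ with barycentric coordinates $a+b/V$ for some integer affine dependence $b$ of $S$ from part~(2). Multiplying by $V$ yields the tuple $Va+b$ of $P$, as claimed.

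For the converse, given $V \in I\N$ and an integer affine dependence $b$ of $S$ satisfying the gcd condition, I would show that $c := Va+b \in \Z^{d+1}$ is a $(d+1)$-tuple (it sums to $V$ since $\sum a_i = 1$ and $\sum b_i = 0$) of a cyclic $d$-simplex of volume $V$ that projects to $S$. Abstractly $c$ already determines such a simplex by Remark~\ref{remark:quintuple}; to identify the projection explicitly, I would reverse-engineer the construction of Proposition~\ref{prop:tuple} by choosing lifts $\tilde v_i \in \Lambda$ of the $s_i$ in a suitable ambient lattice so that $\sum a_i \tilde v_i$ is a lattice point lifting the chosen generator $q$, and the adjustment by $b$ (which is a valid affine dependence of $S$) is absorbed into the fiber-direction coordinates of the $\tilde v_i$. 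The freedom to choose $b$ is precisely the freedom in these fiber coordinates, which is why every integer affine dependence yields a simplex in the family.

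Finally, the ``only important modulo $V$'' clause is immediate from the convention that tuples live in $\Z_V^{d+1}$. For the gcd condition, an element $[p] \in G_P$ with barycentric coordinates $c/V$ generates the cyclic group $G_P \cong \Z_V$ iff it has order $V$; using the structural relation $c = Va + b$ together with the fact that the entries of $Va$ lie in $(V/I)\Z$, this order condition translates into $\gcd(V, b_0, \dots, b_d) = 1$. The main obstacle I anticipate is making the converse construction rigorous, in particular verifying that the resulting tuple is \emph{exactly} $Va+b$; this is essentially a reverse reading of Proposition~\ref{prop:tuple} but requires careful bookkeeping of the chosen lifts.
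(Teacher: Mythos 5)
Your plan matches the paper exactly: the corollary carries no proof (just a \qed) because it is meant as an immediate repackaging of Proposition~\ref{prop:tuple}, and your forward direction is precisely that repackaging, with the converse (reversing the lift construction) and the normalisation claims being the routine bookkeeping you describe. The only step deserving a second look is your translation of the order-$V$ condition $\gcd(V, (Va+b)_0,\dots,(Va+b)_d)=1$ into $\gcd(V,b_0,\dots,b_d)=1$: for a prime dividing $V/I$ all entries of $Va$ are divisible by it and the two conditions visibly agree, but for a prime whose full multiplicity in $V$ divides $I$ the entries $Va_i$ need not all be divisible by it (since $\gcd(I,Ia_0,\dots,Ia_d)=1$), so that equivalence requires either an extra word or the freedom to modify $a$ by a $\frac1I$-integral affine dependence of $S$.
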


The proof of Theorem~\ref{thm:main} is by applying Corollary~\ref{coro:parameters} to the case of cyclic empty $4$-simplices, that is, by looking at hollow configurations of five points in $\R^k$, $k<4$. 
Observe that for primitive families (that is, for $I=1$), the only generator $q$ of $\Lambda/\Lambda_S$ is the zero class, represented (for example) by the first element of $S$. This gives us $a=(1,0,\dots,0)$ but, since we are interested in the tuples modulo the integers, we can as well take $a=0$. This is our convention in all the primitive families of Theorem~\ref{thm:main}.

Observe that if $\conv(S)$ is hollow then all the cyclic simplices of  Corollary~\ref{coro:parameters}  are automatically \emph{hollow}, but not necessarily \emph{empty}. 
Let us now address the issue of the restrictions needed for them to be empty. 
They are related to the volumes of facets and the following observation.

\begin{lemma}
\label{lemma:facetvolumes}
Let $P$ be a cyclic $d$-simplex of volume $V$ with $(d+1)$-tuple $(b_0,\dots,b_d)$. Then, the volume of the $i$th facet of $P$ ($i=0,\dots,d)$ equals
\medskip\\
\centerline{
\hfill
$V_i:=\gcd(V,b_i).
\hfill \qed$%
}
\end{lemma}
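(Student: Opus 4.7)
The plan is to exploit the fundamental relation $V = h_i \cdot \Vol(F_i)$, where $F_i$ is the facet opposite $v_i$ and $h_i$ is the lattice distance from $v_i$ to $\aff(F_i)$, and to show that $h_i = V/\gcd(V,b_i)$; the formula for $\Vol(F_i)$ then follows.

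First I would introduce the primitive affine lattice functional $\ell_i:\R^d\to\R$ that vanishes on $\aff(F_i)$ and takes a positive value on $v_i$, setting $h_i := \ell_i(v_i)$. Primitivity gives $\ell_i(\Lambda)=\Z$, and the volume formula gives $\Vol(F_i)=V/h_i$, so the lemma reduces to the identity $h_i = V/\gcd(V,b_i)$.

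Next, let $p\in\Lambda$ be a representative of a generator $[p]$ of $G_P$ whose barycentric coordinates are $(b_0/V,\dots,b_d/V)$. Applying $\ell_i$ to $p = \sum_j (b_j/V) v_j$ and using $\ell_i(v_j)=0$ for $j\neq i$ and $\ell_i(v_i)=h_i$ yields $\ell_i(p)=h_ib_i/V$; since $p\in\Lambda$ this is an integer, so $V\mid h_ib_i$. Now observe that $\ell_i(\Lambda_P)=h_i\Z$ (the non-$i$ vertices contribute $0$ and $v_i$ contributes $h_i$), so $\ell_i$ descends to a homomorphism $\bar\ell_i: G_P\to\Z/h_i\Z$ that is surjective by primitivity of $\ell_i$. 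Since $[p]$ generates the cyclic group $G_P$, its image $\bar\ell_i([p])\equiv h_ib_i/V\pmod{h_i}$ must generate $\Z/h_i\Z$; writing $m:=h_ib_i/V\in\Z$, this says $\gcd(m,h_i)=1$.

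Finally, from $mV=h_ib_i$ together with $\gcd(m,h_i)=1$ I conclude $m\mid b_i$; writing $b_i=mb'$ gives $V=h_ib'$ and $\gcd(V,b_i)=b'\gcd(h_i,m)=b'$, so $h_i=V/\gcd(V,b_i)$ and $\Vol(F_i)=\gcd(V,b_i)$. The step that needs the most care is the identification of the image of $[p]$ under $\bar\ell_i$: this is precisely where the combinatorial coordinate $b_i$ of the tuple is linked to the geometric lattice distance $h_i$, and everything else is routine bookkeeping around primitivity, cyclicity, and divisibility.
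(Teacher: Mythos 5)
Your argument is correct. Note that the paper states Lemma~\ref{lemma:facetvolumes} with a \emph{qed} symbol and supplies no proof at all, so there is nothing to compare against; your write-up is a legitimate way to fill that gap. The chain of reasoning --- the pyramid formula $V=h_i\cdot\Vol(F_i)$, the computation $\ell_i(p)=h_ib_i/V\in\Z$, the identification $\ell_i(\Lambda_P)=h_i\Z$ so that the induced map $\bar\ell_i:G_P\to\Z/h_i\Z$ is a surjection carrying the generator $[p]$ to $m=h_ib_i/V$, and the divisibility bookkeeping giving $h_i=V/\gcd(V,b_i)$ --- is sound, including the degenerate case $b_i\equiv 0$, where $\gcd(0,h_i)=h_i=1$ forces the facet to have volume $V$. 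The only point worth making explicit is that you are choosing the integer representative of the tuple with $\sum_j b_j=V$ so that $(b_0/V,\dots,b_d/V)$ are genuine barycentric coordinates of a lattice point $p$; this is harmless since shifting any $b_j$ by a multiple of $V$ changes neither the class $[p]$ nor $\gcd(V,b_j)$, and it matches the normalization used in Remark~\ref{remark:quintuple}.
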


\begin{proposition}
\label{prop:coprime}
Let $P$ be a cyclic $d$-simplex of volume $V$ with tuple $ (b_0,\dots,b_d)$. A necessary condition for $P$ to be empty is that no $d-2$ of the $b_i$s (equivalently, no $d-2$ of the facet volumes $V_i$) have a factor in common with $V$.
\end{proposition}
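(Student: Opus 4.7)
The plan is to prove the contrapositive: if some prime $p$ dividing $V$ also divides $b_i$ for every $i$ in a subset $I \subseteq \{0,\dots,d\}$ of size $d-2$, then I exhibit a non-vertex lattice point in $P$. Write $J := \{0,\dots,d\}\setminus I$, which has three elements, and let $F_J := \conv(v_j : j \in J)$, a triangular $2$-face of $P$. The aim is to locate a lattice point of $P$ lying in $F_J$ that is not one of the three vertices $v_j$.

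Since $G_P \cong \Z_V$ is cyclic and $p \mid V$, there is a unique subgroup of order $p$ in $G_P$, generated by $[h] := (V/p)[g]$, where $[g]$ is the generator corresponding to $(b_0,\dots,b_d)$. The barycentric coordinates of $[h]$ modulo $\Z$ equal $(b_i/p)\pmod 1$, so they vanish on $I$ and equal $\bar b_j/p$ on $J$, where $\bar b_j \in \{0,\dots,p-1\}$ denotes the canonical residue of $b_j$ modulo $p$. Using the tuple convention $\sum_i b_i = 0$ (or just $p \mid V$ together with $p \mid b_i$ on $I$), the sum $\sum_{j \in J}\bar b_j$ is a nonnegative multiple of $p$ bounded by $3(p-1)$, hence lies in $\{0, p, 2p\}$. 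The value $0$ would force $p \mid b_i$ for every $i$, contradicting $\gcd(V, b_0,\dots,b_d) = 1$ (a condition equivalent to $(b_0,\dots,b_d)$ representing a generator of $G_P$).

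If the sum equals $p$, then the vector $(c_i)$ with $c_i = 0$ for $i \in I$ and $c_j = \bar b_j/p$ for $j \in J$ is a valid set of barycentric coordinates (nonnegative, summing to $1$) agreeing modulo $\Z$ with those of $[h]$; thus it defines a lattice representative of $[h]$ lying in $F_J$. If the sum equals $2p$, the same construction applied to $-[h]$ works, since the residues $p - \bar b_j$ are all positive (otherwise two of them would sum to $2p$ with each less than $p$, which is impossible) and sum to $p$. A brief check shows that in either case at least two of the three entries on $J$ are positive (no single $\bar b_j$ can equal $p$), so the representative lies in the relative interior of an edge of $F_J$ or of $F_J$ itself, and is therefore not a vertex of $P$. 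This contradicts the emptiness of $P$ and completes the proof. The only delicate step is the trichotomy on $\sum_{j \in J}\bar b_j$, which is purely arithmetic and poses no real obstacle.
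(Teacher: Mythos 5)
Your proof is correct and follows essentially the same route as the paper's: both identify the order-$p$ class $(V/p)[g]$, whose barycentric coordinates vanish modulo $\Z$ on the $d-2$ indices sharing the factor $p$ with $V$, and conclude that the triangular $2$-face on the remaining three indices is not empty. The only difference is in the last step: the paper invokes the fact that every empty triangle is unimodular, whereas you make this self-contained by explicitly producing the non-vertex lattice point via the residue trichotomy $\sum_{j\in J}\bar b_j\in\{0,p,2p\}$ (replacing $[h]$ by $-[h]$ in the last case), which is a valid and slightly more elementary finish.
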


\begin{proof}
Recall that the $V$ tuples $j b$, $j=0,\dots,V-1$, represent the $V$ classes of lattice points in $\Lambda/\Lambda_P$. If $d-2$ of the $b_i$s have a factor in common with $V$ then there is a $j\ne 0$ such that $jb$ has three (or less) nonzero entries. That implies one of the non-zero classes in $\Lambda/\Lambda_P$ to have representatives in a 2-plane spanned by a 2-face of $P$, which implies $P$ has a 2-face that is not unimodular, hence not empty. That is a contradiction since every face of an empty simplex is empty.
\end{proof}
\begin{proposition}
\label{prop:emptyfacets}
Let $P$ be a cyclic hollow $4$-simplex of volume $V$ with $5$-tuple $(b_0,\dots,b_4)$ and, as above, let $V_i:= \gcd(V, b_i)$ (the volume of the \emph{$i$-th} facet of $P$).
The following are equivalent:
\begin{enumerate}
\item $P$ is empty.

\item For each $i$, if $V_i\ne 1$ then the multiset $\{b_0,\dots,b_4\}$ coincides,  modulo $V_i$,
 with the multiset $\{0,\alpha, -\alpha, \beta,-\beta\}$ for some $\alpha$ and $\beta$ coprime with $V_i$.
\end{enumerate}
\end{proposition}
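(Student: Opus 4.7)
The plan is to reduce the equivalence to White's classification of empty $3$-simplices (Theorem~\ref{thm:white} and Example~\ref{example:White}), applied facet by facet. Since $P$ is hollow by hypothesis, every lattice point of $P$ lies on $\partial P=\bigcup_i F_i$, so $P$ is empty if and only if each of its five facets is empty as a $3$-simplex.

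The key step is to identify the $4$-tuple of $F_i$ from the $5$-tuple of $P$. Writing $\Lambda_i:=\Lambda\cap\aff(F_i)$, the inclusion $\Lambda_i\hookrightarrow\Lambda$ induces an injection $\Lambda_i/\Lambda_{F_i}\hookrightarrow G_P$ (since $\Lambda_i\cap\Lambda_P=\Lambda_{F_i}$), which identifies $\Lambda_i/\Lambda_{F_i}$ with the unique subgroup of order $V_i$ inside the cyclic group $G_P$; in particular $F_i$ is cyclic. If $p\in\Lambda$ represents a generator of $G_P$ whose $5$-tuple is $(b_0,\dots,b_4)$, then $\tfrac{V}{V_i}p$ represents a generator of this subgroup, and any lift $p'$ of $[\tfrac{V}{V_i}p]$ to $\aff(F_i)$ (which exists precisely because the subgroup coincides with $\Lambda_i/\Lambda_{F_i}$) yields a $4$-tuple for $F_i$. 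A direct change-of-coordinates computation, using $V_i\mid V$ and $V_i\mid b_i$, shows that this $4$-tuple reduces modulo $V_i$ to $(b_0,\dots,\widehat{b_i},\dots,b_4)$.

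Combining this with Example~\ref{example:White} and Corollary~\ref{coro:cyclic}, the cyclic $3$-simplex $F_i$ of volume $V_i$ is empty if and only if its $4$-tuple has multiset form $\{\alpha,-\alpha,\beta,-\beta\}\pmod{V_i}$ with $\alpha,\beta$ coprime to $V_i$: the $4$-tuple of $T(p,V_i)$ is $(p,-p,-1,1)$ and multiplication by a unit preserves this shape, while conversely any tuple of this shape becomes $(\alpha\beta^{-1},-\alpha\beta^{-1},1,-1)$ after multiplication by $\beta^{-1}$, so it represents an empty tetrahedron. Since $b_i\equiv 0\pmod{V_i}$ automatically, this is equivalent to $\{b_0,\dots,b_4\}\equiv\{0,\alpha,-\alpha,\beta,-\beta\}\pmod{V_i}$, trivially satisfied when $V_i=1$ (which explains the ``if $V_i\ne 1$'' clause in the statement). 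Requiring this for every $i$ is condition~(2), and by the first paragraph this is the same as $P$ being empty.

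The main obstacle is the bookkeeping in the second step: tracking barycentric coordinates (which must sum to $1$) through the passage from $\tfrac{V}{V_i}p$ to a lift $p'\in\aff(F_i)$, and handling separately the cases where the deleted vertex $v_i$ is the origin $v_0=0$ or not. Once the divisibilities $V_i\mid V$ and $V_i\mid b_i$ are invoked, the reduction modulo $V_i$ is clean and condition~(2) falls out.
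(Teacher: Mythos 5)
Your proposal is correct and follows essentially the same route as the paper: reduce emptiness of the hollow simplex $P$ to emptiness of its five facets, identify the lattice points in the hyperplane of the $i$-th facet with the order-$V_i$ subgroup of $G_P$ generated by $\frac1{V_i}(b_0,\dots,b_4)$, and conclude via White's classification (Example~\ref{example:White}). Your write-up is merely more explicit about extracting the facet's $4$-tuple from the $5$-tuple of $P$, a computation the paper leaves implicit.
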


\begin{proof}
Once we know that $P$ is hollow, it will be empty if and only if its facets are empty tetrahedra. The (classes of) lattice points in $\Lambda/\Lambda_P$ lying in the hyperplane of the $i$-th facet are those that have a zero in the $i$-th position of their barycentric coordinates; these, as multiples of the generator $\fracV (b_0,\dots,b_4)$ for the quotient group, are precisely the multiples of $\frac1{V_i}(b_0,\dots,b_4)$. 
The necessary and sufficient condition for the facet to be empty is, by Example~\ref{example:White}, that the four non-zero entries in  
$\frac1{V_i}(b_0,\dots,b_4)$ come in two pairs of opposite entries modulo $V_i$, and that the entries are prime with $V_i$.
\end{proof}

\section{Proof of the main theorem, cases $k=1,2$}
\label{sec:k<=2}

\begin{proof}[Proof of Theorem~\ref{thm:main}, case $k=1$]
There are two possibilities for a hollow configuration $S$ of five points in dimension one. Either $S=\{0,0,0,1,1\}$ or 
$S=\{0,0,0,0,1\}$. We first show that every cyclic simplex projecting to the latter projects also to the former.
Indeed, let $P=\conv(v_0,\dots,v_4)$ be a cyclic simplex and $\pi: \R^4 \to \R$ a lattice projection sending $v_0,\dots,v_3$ to $0$ and $v_4$ to $1$.  Since the facet $F=\conv(v_0,\dots,v_3)$ is an empty tetrahedron, by Theorem~\ref{thm:white} there is a lattice functional $f:\R^4\to \R$ sending two of its vertices (say $v_0$ and $v_1$) to $0$ and the other two to $1$. Let $c=f(v_4)\in \Z$. Then the functional $f-c\cdot \pi$  sends $v_0$, $v_1$ and $v_4$ to $0$ and $v_2$ and $v_3$ to $1$.

That is to say, we have a single fine family, projecting to $S=\{0,0,0,1,1\}$. It is a primitive configuration and the linear space of affine dependences among its five points equals
\[
\{(\alpha+\beta,-\alpha,-\beta,-\gamma,\gamma): \alpha,\beta,\gamma \in \R\}. 
\]
Thus, by Corollary~\ref{coro:parameters}, every cyclic simplex of volume $V$ projecting to $S$ has a $5$-tuple of the form
\[
 (\alpha+\beta,-\alpha,-\beta,-\gamma,\gamma), 
\]
with $\alpha,\beta,\gamma \in \Z$. Fix such a $5$-tuple.  By Proposition~\ref{prop:coprime}, for the simplex to be empty we need
$\gcd(\gamma,V)=\gcd(\alpha,\beta,V)=1$. Multiplying the $(d+1)$-tuple by the inverse of $\gamma$ modulo $V$, there is no loss of generality in taking $\gamma=1$. 
\end{proof}

\begin{proof}[Proof of Theorem~\ref{thm:main}, case $k=2$]
Our set $S$ consists now of five of the six lattice points in $2\Delta_2$ (see Figure~\ref{fig:2Delta}), perhaps with repetition. 
In order for $S$ not to project to dimension 1, we need to use the three vertices of $\Delta$, which leaves six
possibilities for the two additional elements of $S$, modulo affine symmetry.

{\bf Claim:} \emph{no four of the five elements of $S$ can be on the same edge of $2\Delta_2$:} Suppose that $\pi: P \to 2\Delta$ projects an empty $4$-simplex $P$ to $2\Delta$, with four of the vertices going to the same edge of $2\Delta$. Let  $f:\R^2 \to \R$ be the lattice functional taking the value $0$ on that edge and the value $2$ at the opposite vertex. 
Let $\tilde f:= f\circ \pi: \R^4 \to \R$. Since the facet of $P$ where $\tilde f$ vanishes is an empty tetrahedron, by Theorem~\ref{thm:white} there is a lattice functional $g:\R^4\to \R$ sending two of its vertices to $0$ and the other two to $1$. Let $c$ be the value of $g$ at the fifth vertex of $P$. Then $g - \lfloor\frac{c}2\rfloor \tilde f$  takes values $0$ or $1$ at all vertices of $P$, contradicting the fact that $P$ does not project to a hollow segment.

The claim implies that $S$ is, modulo symmetries of $2\Delta_2$, one of the two point configurations in Figure~\ref{fig:2Delta-cases}. 
\begin{figure}
\label{fig:2Delta-cases}
\includegraphics[scale=0.7]{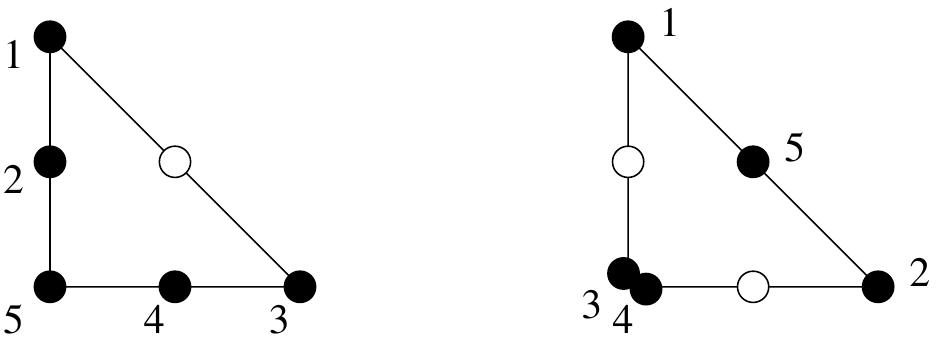}
\bigskip \\
\caption{The six possibilities for a size $5$ subconfiguration of $2\Delta_2$ containing the three vertices. Only the first two arise as the projection of empty $4$-simplices with $k=2$.}
\end{figure}
Their respective spaces of linear dependences are as follows, where coordinates are labeled as shown in the figure.
\[
\{(\beta,-2\beta,\alpha,-2\alpha,\alpha+\beta): \alpha,\beta \in \R^2\}
\qquad{and}\qquad
\{(-\beta,-\beta,\alpha,-\alpha,2\beta): \alpha,\beta \in \R^2\};
\]
 the integer dependences are the same, with $\alpha,\beta\in \Z$. The first configuration is primitive ($I=1$), but in the second one we have $I=2$ and we can choose as barycentric coordinates for the unique generator of the quotient group the vector $\left(0,\frac12,0,\frac12,0\right)$.
Thus, by Corollary~\ref{coro:parameters}, the cyclic simplices of volume $V$ projecting to these configurations are parametrized by 
\begin{align*}
 (\beta,-2\beta,\alpha,-2\alpha,\alpha+\beta)  \quad \text{ and } \quad
\frac{V}2 \left(0,1, 0,1, 0\right)+  (-\beta,-\beta,\alpha,-\alpha,2\beta),
\end{align*}
respectively. In the first case $V$ must be odd, by Proposition~\ref{prop:coprime}. In the second case $V$ must be even, since $V$ is a multiple of the index $I=2$. Proposition~\ref{prop:coprime} also implies that $\gcd(\alpha,V)=\gcd(\beta,V)=1$ for empty simplices. This allows us to multiply the $5$-tuple by the inverse of $\beta$ modulo $V$, producing $5$-tuple in the form of Theorem~\ref{thm:main}. (In the second one, observe that both $\beta$ and its inverse are odd, so that multiplying by $\beta^{-1}$ leaves the part $\left(0,\frac12,0,\frac12,0\right)$ intact). Finally, in the second $5$-tuple $V$ must be a multiple of four since for $V = 2 \pmod4$ the $5$-tuple
\[
\left(0,\frac{V}2,0,\frac{V}2,0\right)+  (-1,-1,\alpha,-\alpha,2)=
  \left(-1,\frac{V}2-1,\alpha,\frac{V}2-\alpha,2 \right)
\]
has two even $b_i$s, contradicting Proposition~\ref{prop:coprime} (observe that $\alpha$ is odd, since $\gcd(\alpha, V)=1$ and $V$ is even). 
\end{proof}

Let us finally check that the conditions stated in Theorem~\ref{thm:main} for $\alpha,\beta$ and $V$ are not only necessary but also sufficient for the corresponding simplices to be empty. To show this via Proposition~\ref{prop:emptyfacets} we need to look at the facets of volumes in each case:

\begin{lemma}
\label{lemma:volumes1and2}
Let $P$ be an empty simplex as in Theorem~\ref{thm:main} with $k\in \{1,2\}$. Then, the volumes $(V_0,V_1,\dots,V_4)$ of its facets are:
\begin{enumerate}
\item If $k=1$, we have $V_0=\gcd(V,\alpha+\beta)$, 
$V_1=\gcd(V,\alpha)$,  $V_2=\gcd(V,\beta)$ and $V_3=V_4=1$. In particular, there can be up to three nonunimodular facets.
\item If $k=2$ then $V_0=V_1=V_2=V_3=1$. In the primitive case $V_4=\gcd(V,\alpha+1)$ and in the nonprimitive case $V_4=2$.
In particular, there is at most one nonunimodular facet.
\end{enumerate}
\end{lemma}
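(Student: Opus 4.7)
The plan is simply to apply Lemma~\ref{lemma:facetvolumes} (every facet volume equals $\gcd(V,b_i)$) to the explicit $5$-tuples produced by Theorem~\ref{thm:main} in the two cases, and to verify by elementary gcd manipulations that the stated coprimality conditions on $\alpha,\beta,V$ reduce each $\gcd$ to the claimed value.

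For $k=1$, the $5$-tuple is $(\alpha+\beta,-\alpha,-\beta,-1,1)$, so $V_0=\gcd(V,\alpha+\beta)$, $V_1=\gcd(V,\alpha)$, $V_2=\gcd(V,\beta)$ follow immediately, and $V_3=V_4=\gcd(V,\pm 1)=1$. Since $\gcd(V,\alpha,\beta)=1$, it is still possible for any two (but not all three) of $V_0,V_1,V_2$ to exceed $1$, which accounts for the remark that there can be up to three non-unimodular facets.

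For $k=2$ primitive, the tuple is $(1,-2,\alpha,-2\alpha,1+\alpha)$ with $V$ odd and $\gcd(\alpha,V)=1$. Then $V_0=1$ trivially, $V_1=\gcd(V,-2)=1$ because $V$ is odd, $V_2=\gcd(V,\alpha)=1$, $V_3=\gcd(V,-2\alpha)=1$ for the same two reasons, and $V_4=\gcd(V,1+\alpha)$ as claimed.

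For $k=2$ non-primitive, the tuple after expanding $\tfrac V2(0,1,0,1,0)+(-1,-1,\alpha,-\alpha,2)$ is
\[
(-1,\; V/2-1,\; \alpha,\; V/2-\alpha,\; 2).
\]
Here $V_0=1$, $V_2=\gcd(V,\alpha)=1$, and $V_4=\gcd(V,2)=2$ since $V\in 4\Z$. The only slightly delicate part, which I expect to be the main obstacle (and the reason $V\in 4\Z$ rather than $V\in 2\Z$ appears in the hypothesis), is showing $V_1=V_3=1$. Since $V$ is even, $\alpha$ must be odd; since moreover $V/2$ is even (because $4\mid V$), both $V/2-1$ and $V/2-\alpha$ are odd, hence so are $\gcd(V,V/2-1)$ and $\gcd(V,V/2-\alpha)$. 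On the other hand, these gcds divide $\gcd(V,2(V/2-1))=\gcd(V,-2)=2$ and $\gcd(V,2(V/2-\alpha))=\gcd(V,-2\alpha)=2$ respectively, so they equal $1$. Thus only $V_4=2$ is non-unimodular, as stated.
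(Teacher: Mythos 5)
Your gcd computations are correct and this is exactly the paper's (one-line) argument: apply Lemma~\ref{lemma:facetvolumes} to the explicit $5$-tuples and use the parity conditions on $V$; your treatment of the nonprimitive $k=2$ case, where $4\mid V$ forces $V/2-1$ and $V/2-\alpha$ to be odd while their gcds with $V$ divide $2$, is the right way to nail down $V_1=V_3=1$.

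One genuine error, though, in the $k=1$ case: the parenthetical claim that not all three of $V_0,V_1,V_2$ can exceed $1$ is false, and it contradicts the very conclusion you draw from it (``up to three non-unimodular facets''). The hypothesis $\gcd(V,\alpha,\beta)=1$ only rules out a \emph{single} prime dividing $V$, $\alpha$ and $\beta$ simultaneously; it does not prevent three \emph{different} primes from dividing $V$ together with $\alpha$, $\beta$ and $\alpha+\beta$ respectively. For instance $V=30$, $\alpha=3$, $\beta=5$ gives the $5$-tuple $(8,-3,-5,-1,1)$ with $V_0=\gcd(30,8)=2$, $V_1=3$, $V_2=5$, and one checks via Proposition~\ref{prop:emptyfacets} that this simplex is empty. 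So all three facets $0,1,2$ can be non-unimodular at once, which is precisely what the lemma's ``up to three'' is asserting (the constraint from Proposition~\ref{prop:coprime} is only that no prime divides two of the $V_i$ simultaneously). Replace the parenthetical with this observation and the proof is complete.
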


\begin{proof}
This follows directly from Lemma~\ref{lemma:facetvolumes} and the expression of the $5$-tuples, taking into account that if $k=2$ and $P$ is primitive then $V$ is required to be odd, while if $k=2$ and $P$ is nonprimitive then $V$ is required to be a multiple of four.
\end{proof}

\begin{proposition}
\label{prop:sufficent1and2}
All the cyclic simplices in the conditions stated in parts $k=1$ and $k=2$ of Theorem~\ref{thm:main} are empty.
\end{proposition}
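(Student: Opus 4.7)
The plan is to reduce everything to checking the criterion of Proposition~\ref{prop:emptyfacets}. Since in all cases the cyclic simplex projects (by construction) to a hollow $1$- or $2$-polytope, it is automatically hollow, so only the facet condition in Proposition~\ref{prop:emptyfacets} needs verification. By Lemma~\ref{lemma:volumes1and2} we already know which facets can possibly have $V_i > 1$, so the task reduces to a finite check: at each such facet, reduce the $5$-tuple modulo $V_i$, verify that the multiset of reductions has the form $\{0,\alpha,-\alpha,\beta,-\beta\}$, and verify that $\alpha$ and $\beta$ are coprime to $V_i$.

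For $k=1$, with $5$-tuple $(\alpha+\beta,-\alpha,-\beta,-1,1)$, the only possibly non-unimodular facets correspond to $V_0=\gcd(V,\alpha+\beta)$, $V_1=\gcd(V,\alpha)$ and $V_2=\gcd(V,\beta)$. Modulo $V_0$ we have $-\beta\equiv\alpha$, so the multiset becomes $\{0,-\alpha,\alpha,-1,1\}$; the same symmetric reduction happens at the other two facets. The only non-trivial step is coprimality, which follows from the hypothesis $\gcd(\alpha,\beta,V)=1$: any prime $p$ dividing both $V_0$ and $\alpha$ would also divide $\beta$ (because $p\mid \alpha+\beta$) and $V$, contradicting the hypothesis; and similarly for $V_1$ and $V_2$.

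For $k=2$ primitive, the $5$-tuple is $(1,-2,\alpha,-2\alpha,1+\alpha)$ with $V$ odd, and the only possibly non-unimodular facet has $V_4=\gcd(V,1+\alpha)$. Reducing modulo $V_4$ we get $\{1,-2,-1,2,0\}=\{0,1,-1,2,-2\}$, which is of the required form with $\alpha'=1,\beta'=2$; coprimality holds since $V_4$ divides the odd number $V$, hence $\gcd(2,V_4)=1$. For $k=2$ non-primitive, the $5$-tuple expanded is $(-1,\tfrac{V}{2}-1,\alpha,\tfrac{V}{2}-\alpha,2)$, with $V\in 4\Z$. A short computation (using $V\in 4\Z$ and $\gcd(\alpha,V)=1$, so $\alpha$ and $\tfrac{V}{2}-\alpha$ are odd) shows $V_0=V_1=V_2=V_3=1$ and $V_4=2$. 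Modulo~$2$ the tuple becomes $(1,1,1,1,0)=\{0,1,-1,1,-1\}\pmod{2}$, again of the required form with $\alpha'=\beta'=1$, which are trivially coprime with $2$.

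In each situation the main (and essentially only) obstacle is the coprimality check at the special facet, which in every case is forced by the numerical hypotheses placed on $\alpha,\beta,V$ in the statement of Theorem~\ref{thm:main} (respectively $\gcd(\alpha,\beta,V)=1$ for $k=1$, $V$ odd for $k=2$ primitive, and $V\in 4\Z$ with $\gcd(\alpha,V)=1$ for $k=2$ non-primitive). Once these are verified, Proposition~\ref{prop:emptyfacets} delivers emptiness immediately.
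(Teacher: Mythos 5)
Your proof is correct and follows essentially the same route as the paper's: hollowness is automatic from the projection, and emptiness is then a finite residue check via Proposition~\ref{prop:emptyfacets} using the facet volumes $V_i=\gcd(V,b_i)$. You are in fact slightly more explicit than the paper on the coprimality of the nonzero residues with $V_i$ (e.g.\ deducing it from $\gcd(\alpha,\beta,V)=1$ in the $k=1$ case), which the paper leaves implicit.
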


\begin{proof}
By Corollary~\ref{coro:parameters}, all integer values of $V$, $\alpha$ and $\beta$ produce hollow simplices, since they produce simplices projecting to hollow configurations in dimensions $1$ and $2$. Hence, we can apply Proposition~\ref{prop:emptyfacets}, taking into account the facet volumes computed in Proposition~\ref{lemma:volumes1and2}:
\begin{itemize}
\item For the $5$-tuple  $ (\alpha+\beta,-\alpha,-\beta,-1,1)$ we have:
\begin{gather*}
(\alpha+\beta,-\alpha,-\beta,-1,1) = (0, -\alpha,\alpha,-1,1) \pmod{\gcd(V,\alpha+\beta)},\\
(\alpha+\beta,-\alpha,-\beta,-1,1) = (\beta, 0 ,-\beta,-1,1) \pmod{\gcd(V,\alpha)}, \\
(\alpha+\beta,-\alpha,-\beta,-1,1) = (\alpha,-\alpha,0,-1,1) \pmod{\gcd(V,\beta)},
\end{gather*}
as required. 

\item For the case $k=2$, primitive, ($5$-tuple $ (1,-2,\alpha,-2\alpha,1+\alpha)$):
\[
(1,-2,\alpha,-2\alpha,1+\alpha) = (1,-2, -1, 2, 0) \pmod{\gcd(V,\alpha+1)}.
\]

\item For the case $k=2$, nonprimitive, with $5$-tuple
\[
\left(0,\frac{V}2,0,\frac{V}2,0\right)+  (-1,-1,\alpha,-\alpha,2) =
  \left(-1,\frac{V}2-1,\alpha,\frac{V}2-\alpha,2\right)
\]
we have
\[
\left(-1,\frac{V}2-1,\alpha,\frac{V}2-\alpha,2\right) = \left(-1,1,-1,1,0\right) \pmod{2}.
\qedhere
\]
\end{itemize}
\end{proof}

\section{Proof of the main theorem, case $k=3$}
\label{sec:k=3}

We now look at the case $k=3$. That is, let $P= \conv(v_0,\dots,v_4)$ be a hollow cyclic $4$-simplex (later we will add the constraint that $P$ is empty) and $\pi:\R^4\to \R^3$ be a projection map sending the vertices of $P$ to a hollow $3$-dimensional configuration $S=\{s_0,\dots,s_4\}$ with the property that $S$ does not project to dimension two. 
There are finitely many possibilities for $S$, by Theorem~\ref{thm:NillZiegler}. Their exhaustive computation was done in \cite{AverkovKrumpelmannWeltge} and can be summarized as follows:

\begin{theorem}[\cite{AverkovKrumpelmannWeltge}]
\label{thm:AKW}
There are twelve maximal $3$-dimensional hollow polytopes that do not project to dimension two. Their  volumes are bounded by $36$ (attained by the tetrahedron $\conv(0,6e_1, 3e_2, 2e_3)$).
\end{theorem}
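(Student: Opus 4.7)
The plan is to combine effective bounds from the geometry of numbers with a finite computer enumeration. I would begin by establishing that every such maximal $Q$ has bounded lattice width in at least one direction, using the flatness theorem specialized to dimension $3$. Since hollow lattice polytopes in dimension three have width bounded by a small explicit constant (the flatness constant in dim $3$ is known to be small, following Hurkens/Banaszczyk et al.), I can fix a direction $v\in (\Z^3)^*$ in which $\w_v(Q)\le w_{\max}$, while at the same time $\w_v(Q)\ge 2$, since $Q$ does not project to a hollow segment.

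Next, I would analyze the family of $2$-dimensional lattice slices $Q_t := Q \cap \pi_v^{-1}(t)$ for integer $t$. Each slice is a hollow lattice polygon, and so, by the two-dimensional classification (Example~\ref{exm:coarse2dim}), either projects to a unit segment or is isomorphic to $2\Delta_2$. Since $Q$ does not project to $2\Delta_2$ nor to a hollow segment, the stack of slices cannot collectively fit inside a common width-one projection to $\R^2$, nor can they all be subsets of $2\Delta_2$ in compatible positions. This combinatorial constraint, combined with the bound on $\w_v(Q)$, yields an explicit bound on the area of each $Q_t$ and hence on $\Vol(Q)$. Maximality of $Q$ is then used to ensure every facet is ``tight'': any enlargement in a facet-normal direction would introduce an interior lattice point or force a hollow projection.

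Once an effective volume bound is in hand, I would enumerate all hollow lattice $3$-polytopes up to that bound (reusing, e.g., the enumeration tools for lattice $3$-polytopes of Blanco--Santos), filter out those admitting a hollow projection to dimension $\le 2$, and finally identify the maximal ones by pairwise inclusion tests. From the resulting list one reads off that there are exactly twelve maximal polytopes and that the maximum volume is $36$, realized by $\conv(0,6e_1,3e_2,2e_3)$.

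The main obstacle is obtaining a volume bound sharp enough that the enumeration is computationally tractable: the bound extracted from Theorem~\ref{thm:NillZiegler} is non-effective, and even a direct use of the flatness constant in dimension $3$ would permit a search space in the hundreds or thousands of volume units. Making the enumeration feasible requires careful algorithmic pruning, for example by always fixing the direction of smallest width, reducing the slices via the $2$D classification, and extending one slice at a time while discarding candidates that already admit a forbidden projection.
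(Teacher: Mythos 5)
This statement is not proved in the paper at all: it is imported verbatim from Averkov--Kr\"umpelmann--Weltge \cite{AverkovKrumpelmannWeltge} (building on \cite{AverkovWagnerWeismantel}) and used as a black box. So there is no ``paper proof'' to compare against; what you have written is an attempt to reconstruct the external result. Your high-level strategy (flatness bound in a chosen direction, control of the two-dimensional slices, an explicit volume bound, then exhaustive enumeration and maximality tests) is indeed the general shape of the argument in those references, but as written it has genuine gaps.

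First, the slices $Q_t=Q\cap\pi_v^{-1}(t)$ at integer heights are polygons with \emph{rational}, not integral, vertices, so the classification of hollow lattice polygons in Example~\ref{exm:coarse2dim} does not apply to them. What one actually needs is an area bound for lattice-point-free planar \emph{convex bodies} in terms of their width (this is precisely the Averkov--Wagner inequality \cite{AverkovWagner}, used elsewhere in this paper in Lemma~\ref{lemma:polygon_volume}); your step ``this combinatorial constraint \dots yields an explicit bound on the area of each $Q_t$'' is asserted rather than derived, and it is the quantitative heart of the matter. Second, the notion of maximality here is subtle: maximal lattice-free sets in $\R^3$ can be unbounded polyhedra, and \cite{AverkovKrumpelmannWeltge} is largely devoted to sorting out which notion of maximality (among lattice-free convex sets versus among integral lattice-free polytopes) yields a finite list; your sketch treats maximality as a routine pairwise-inclusion filter at the end. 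Third, the enumeration itself---all hollow lattice $3$-polytopes up to a volume bound in the hundreds, filtered by projection behaviour---is the bulk of the work in the cited papers and cannot be dismissed as a reuse of existing tools. In short, your outline points in the right direction but does not constitute a proof; for the purposes of this paper the correct move is simply to cite \cite{AverkovKrumpelmannWeltge}, as the authors do.
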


Observe that $\conv(S)$ is a $3$-polytope with four or five vertices, for which there are combinatorially three cases: it is either a tetrahedron, a pyramid over a quadrilateral, or a triangular bipyramid (a convex union of two tetrahedra with a common facet).
Our proof mixes a (computationally straightforward) enumeration of the subconfigurations of the twelve maximal $3$-polytopes from Theorem~\ref{thm:AKW} with some theoretical observations. What we need from the enumeration is summarized in the following statement. The computations giving it were done by M\'onica Blanco:

\begin{lemma}
\label{lemma:dim3enum}
The subconfigurations of size five and not projecting to dimension two of the twelve polytopes of Theorem~\ref{thm:AKW} are as follows, according to the combinatorics of their convex hulls: 
\begin{enumerate}
\item A certain number of tetrahedra (with an additional boundary point).
\item 24 quadrilateral pyramids, all of them containing some lattice point in the interior of the quadrilateral facet.
\item 29 primitive bipyramids, whose affine dependences are generated by the $5$-tuples in Table~\ref{table:primitive}.
\item 23 nonprimitive bipyramids, whose data are specified in Table~\ref{tab:nonprimitive-bipyr}.
\qed
\end{enumerate}
\end{lemma}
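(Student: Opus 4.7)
The plan is a finite exhaustive enumeration. By Theorem~\ref{thm:AKW}, any size-$5$ lattice configuration in $\R^3$ that does not project to dimension two is contained in one of the twelve maximal hollow $3$-polytopes $Q_1,\dots,Q_{12}$, each of volume at most $36$ and hence containing only a bounded (small) number of lattice points. The set of size-$5$ multisubsets of $\bigcup_j (Q_j\cap \Z^3)$ is therefore finite and can be listed by brute force.

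For each candidate multisubset $S=\{s_0,\dots,s_4\}$, I would first test that $\conv(S)$ is full-dimensional and that $S$ admits no lattice projection to a hollow $2$-configuration: concretely, that no primitive functional $f\in(\Z^3)^*$ makes $f(S)$ fit inside a unit interval, and that $S$ is not a subconfiguration of $2\Delta_2$ up to unimodular equivalence. I would then classify $\conv(S)$ combinatorially as a tetrahedron (with a redundant fifth point lying in the interior or on the boundary of the other four), a quadrilateral pyramid, or a triangular bipyramid. For each bipyramid I would compute, in the notation of Section~\ref{sec:cyclic}, the index $I=[\pi(\Lambda):\Lambda_S]$ to separate primitive from nonprimitive cases, and read off a generator $(b_0,\dots,b_4)$ of the rank-$1$ lattice of integer affine dependences among the $s_i$.

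To avoid overcounting, I would quotient by affine unimodular equivalence. The resulting representatives can be normalized to the form used in Tables~\ref{table:primitive} and~\ref{tab:nonprimitive-bipyr}, modulo multiplication of the $5$-tuple by a unit, permutation of coordinates, and (in the nonprimitive cases) a choice of generator of the index-$I$ quotient, as encoded in the vector $a$ of Proposition~\ref{prop:tuple}. Matching the normalized output against those tables then yields the claimed counts $24$, $29$ and $23$, together with the property that every pyramid on the list carries a lattice point in the interior of its quadrilateral facet.

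The main obstacle is bookkeeping rather than any single deep step. Multisubsets with repeated points must be included, since they arise when distinct vertices of a lift $P$ project to the same point of $S$; the unimodular equivalence relation between $5$-tuples, which simultaneously involves multiplication by units modulo an unknown future volume $V$, coordinate permutations, and the choice of coset representative for the $I$-torsion part, must be implemented carefully to identify tuples that look different but represent the same family. The computation was carried out by M.~Blanco with computer algebra, and its correctness is readily amenable to independent verification by re-implementing the same enumeration.
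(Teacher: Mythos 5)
Your proposal is correct and coincides with what the paper actually does: the lemma carries no written argument (it is stated with a \qed and attributed to a computer enumeration by M.~Blanco), and that enumeration is exactly the finite brute-force search over size-five multisubsets of the lattice points of the twelve maximal hollow $3$-polytopes of Theorem~\ref{thm:AKW}, filtered by the no-projection condition, sorted by combinatorial type and index, and reduced modulo unimodular equivalence, as you describe. The only cosmetic remark is that the fifth point of a tetrahedral configuration is automatically a boundary point (hollowness forbids interior points), which your enumeration would detect in any case.
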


\begin{table}
\hrule
\setlength\tabcolsep{0pt}
\renewcommand{\arraystretch}{1.4}
\small

\[
\begin{array}{c|cc|cc}
I&a\in \frac1I\Z^5  & b\in \Z^5 & 
a\in \frac1I\Z^5&b\in \Z^5 \\ 
\hline
\multirow{5}{*}{2}
&  \frac12( 0,   0,  1, 1,  0 ) &  
    (3,   -1,  -6,    2,    2  ) &
 \frac12(1,  0,   0,   0,  1 ) &  
     ( 4,   -3,    1,   -4,    2  )  

\\ 
 & \frac12( 0,   0,  1,  0,   1 ) &  
    ( 4,   -2,  -6,    3,    1  ) 
& \frac12(1,  0,   0,   0,  1 ) &  
     ( 2,    3,   -1,  -8,    4  ) \\ 
 & \frac12( 0,  1, 1,  0,   0 ) &  
     ( 1,  -6,    2,   6,   -3  ) &

  \frac12(1,  0,  1,  0,   0 ) &  
     (6,  -8,    4,   -3,    1  ) \\ 
&  \frac12( 0,  1,  0,   0,  1  ) &%
       ( 1,   6,   -4,  -6,    3  ) &
  \frac12(1,  0,   0,   0,  1 ) &  
     ( 4,    3,   -1,  -12,   6  ) \\ 
 & \frac12( 0,  1,  0,   0,  1 ) &  
        ( 3,   -1,    4,  -12,   6  ) &
&
\\ 
\hline\hline
\multirow{2}{*}{4}
&  \frac14(2, 1, 1,  0,   0 ) &  
     (3,  -3,    1,   -2,    1  ) &

 \frac14(0,  1,  1,  0,  2 ) &  
     ( 1,    2,   -1,  -4,    2  ) \\
 & \frac14( 0,  0,  1,  2,  1 ) &  
     ( 1,  -4,    1,   4,   -2  ) &
 \frac14( 0, 1, 1,  0,  2 ) &  
     ( 1,   3,   -1,  -6,   3  )\\ 
%
\hline\hline
\multirow{5}{*}{3}
&  \frac13( 0,   0,  2, 1,  0 ) &  
     (-3,    2,    1,    1,   -1  ) 
 & \frac13(1,  0,  2,  0,   0 ) &  
     ( 3,   -3,    1,   -2,    1  )\\ 
 & \frac13( 0,   0,  1, 2,  0 ) &  
     (-3,    1,    2,    2,   -2  ) &
  \frac13( 0,   0,  1, 2,  0 ) &  
     (4,   -2,  -4,    1,    1  ) \\ 
&  \frac13(1,  0,  2,  0,   0 ) &  
     ( 3,  -6,    2,    2,   -1  ) &
  \frac13(1,  0,  2,  0,   0 ) &  
     (4,  -6,    1,    2,   -1  ) \\ 
&  \frac13(1,  0,  2,  0,   0 ) &  
     (4,   -3,    1,  -4,    2  ) &
  \frac13(1,  0,  2,  0,   0 ) &  
     ( 2,   -1,    2,  -6,    3  ) \\ 
&  \frac13( 0,   0,  1, 1, 1 ) &  
     ( 1,  -6,    2,   6,   -3  ) &

\\ 
\hline\hline
\multirow{1}{*}{6}
&  \frac16(1,   0,   0,  4, 1 ) &  
     ( 1,  -3,    1,   2,   -1  ) \\ 
\end{array}
\]

\medskip
\caption{The 23 non-primitive hollow triangular bipyramids of Lemma~\ref{lemma:dim3enum}.
In each of them we give a generator $a\in \frac1I\Z^5$ for the quotient group $\Z^3/\Lambda_S\cong \Z_I$ (written in barycentric coordinates with respect to the vertex set $S$ of the bipyramid) and the primitive affine dependence $b\in \Z^5$ among $S$.
 }
\label{tab:nonprimitive-bipyr}
\hrule\end{table}

The following statement shows that we do not need to care much about tetrahedra and quadrilateral pyramids:

\begin{proposition}
\label{prop:sporadic-k=3}
Let $S$ be a hollow configuration of five points in $\R^3$ such that $\conv(S)$ is one of the tetrahedra or quadrilateral pyramids of Lemma~\ref{lemma:dim3enum}.  Then, any empty $4$-simplex projecting to $S$ has volume bounded by $72$.
\end{proposition}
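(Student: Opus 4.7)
The plan is to combine Corollary~\ref{coro:parameters}, which parametrizes cyclic $4$-simplices projecting to a fixed $5$-point hollow configuration $S \subset \R^3$, with the emptiness criterion of Proposition~\ref{prop:emptyfacets}, applied to each $S$ in the finite lists provided by Lemma~\ref{lemma:dim3enum} and Theorem~\ref{thm:AKW}. For any such $S$, set $I := [\pi(\Lambda) : \Lambda_S]$, which is a divisor of the lattice volume of $\conv(S)$ and hence at most $36$; let $a \in \tfrac{1}{I}\Z^5$ be barycentric coordinates of a generator of $\pi(\Lambda)/\Lambda_S$ (cyclic, by Theorem~\ref{thm:cyclic}); and let $b \in \Z^5$ be the primitive integer affine dependence of $S$. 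Then every cyclic $4$-simplex of volume $V$ projecting to $S$ has a $5$-tuple of the form $Va + kb \pmod V$ for some $k$ coprime with $V/I$.

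Next I exploit the special shape of $b$ in the two cases. In case (1), since $\conv(S)$ is a tetrahedron and $s_4$ lies on a proper face, at least one of $b_0,b_1,b_2,b_3$ is zero; if $s_4$ lies on an edge, two such entries vanish. In case (2), $\conv(S)$ is a quadrilateral pyramid with (say) apex $s_0$, forcing $b_0 = 0$, and $(b_1, b_2, b_3, b_4)$ is the unique affine dependence of the four coplanar base vertices, whose entries are bounded by the (small) area of the base. Moreover, in case (2) the lattice point in the interior of the quadrilateral base implies that the tetrahedral facet $F_0 = \conv(v_1,\ldots,v_4)$ of $P$ is an empty tetrahedron whose projection is a lattice polygon with a non-vertex interior lattice point; by the analysis of hollow lifts of polygons from \cite{BlancoHaaseHoffmanSantos}, such an $F_0$ has small width, and hence small volume, in the fiber direction.

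With this input I apply Proposition~\ref{prop:emptyfacets}: for every facet volume $V_i := \gcd(V, (Va+kb)_i)$ that is greater than~$1$, the multiset $\{(Va+kb)_j \bmod V_i : 0 \le j \le 4\}$ must take the form $\{0, \alpha, -\alpha, \beta, -\beta\}$. The zero entries of $b$ identified above, combined with the bound $I \le 36$, turn this into strong congruence restrictions on the pair $(V, k)$. For each configuration $S$ in Lemma~\ref{lemma:dim3enum}(1)--(2), only finitely many pairs $(V, k\bmod V)$ survive these restrictions, and a direct enumeration (over the finite, explicit list of $S$'s and the finitely many admissible $k$) shows that the largest resulting volume is $72$.

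The main obstacle is that the explicit bound $72$ is not transparent from any single uniform argument: it emerges from the interaction of the modular conditions for each of the many configurations $S$ (tetrahedra plus 24 pyramids), and the maximum value is attained by an isolated sporadic example rather than an infinite family. Consequently the argument is necessarily case-based, but each case is elementary once the $5$-tuple has been parametrized and Proposition~\ref{prop:emptyfacets} has been imposed, which makes the computational verification short.
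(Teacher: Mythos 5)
Your route is genuinely different from the paper's: the paper does not touch the arithmetic of $5$-tuples here at all, but argues geometrically via the Radon point of $S$. For a tetrahedron, the fifth point $s_0$ is the Radon point, and \cite[Lemma 3.1]{IglesiasSantos} gives $\Vol(P)=\Vol(\conv(S))\cdot\length(\pi^{-1}(s_0)\cap P)\le\Vol(\conv(S))\le 36$, since a fiber over a lattice point meeting the empty $P$ in a segment of length $\ge 1$ would contain a non-vertex lattice point. For a pyramid the Radon point $x$ (the intersection of the diagonals of the base $Q$) need not be a lattice point, so the paper routes the bound through the interior lattice point $y$ of $Q$: the fiber over $y$ has length $<1$, and a ray argument gives $\length(\pi^{-1}(x)\cap F_0)\le |xz|/|yz|$, whence $\Vol(P)\le\Vol(\conv(S))\cdot|xz|/|yz|$, which evaluates to at most $72$ over the $24$ pyramids.

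The gap in your version is the unjustified claim that the congruences coming from Proposition~\ref{prop:emptyfacets} ``leave only finitely many pairs $(V,k)$.'' Those conditions are a priori congruence conditions on $V$, not divisibility conditions $V\mid(\text{bounded positive integer})$, and you must show the latter. For tetrahedra this does work: the vanishing barycentric coordinate of $s_0$ makes one facet volume $V_i$ a large divisor of $V$, and White's condition on that facet forces $V_i$ to divide a positive sum of the remaining (small, positive) coordinates of $b$. But for a quadrilateral pyramid the base facet $F_0$ has tuple $(k\nu_1,-k\nu_2,k\nu_3,-k\nu_4)\bmod V$, and if the base's dependence already pairs into opposite coefficients (e.g.\ $\nu_1=\nu_2$, $\nu_3=\nu_4$, a trapezoid/parallelogram-type base), White's condition is satisfied for \emph{every} $V$ coprime to the $\nu_i$; the conditions on the remaining facets are only modulo small numbers and bound nothing. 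What actually kills these cases is exactly the interior lattice point of $Q$, used \emph{geometrically}: an empty $F_0$ cannot contain a fiber of length $\ge 1$ over that point, which bounds $\Vol(F_0)$ and hence $\Vol(P)$. You mention this input, but it is not a congruence on the $5$-tuple and cannot be folded into your ``enumeration of admissible $(V,k)$''; as written, the pyramid case (and with it the constant $72$, which is produced precisely by the pyramids) is not established. Either carry out the fiber-length argument as the paper does, or prove separately that no pyramid in Lemma~\ref{lemma:dim3enum}(2) has a base dependence making White's condition on $F_0$ vacuous --- which again requires a geometric fact (a trapezoid whose vertices generate the ambient planar lattice and whose dependence pairs off has height one, hence no interior lattice point) plus a case analysis of the nonprimitive pyramids.
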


\begin{remark}
The existence of a global bound in Proposition~\ref{prop:sporadic-k=3} follows from results in \cite{BlancoHaaseHoffmanSantos}. For tetrahedra this is Corollary 4.2 in that paper, and for pyramids over non-hollow polytopes it is the combination of Corollary 4.4 and Lemma 4.1. We include a proof of Proposition~\ref{prop:sporadic-k=3} in order to give the explicit bound of 72.
\end{remark}

\begin{proof}
Let $P=\conv(v_0,\dots,v_4)$ be empty and projecting to $S=\{s_0,\dots,s_4)$ and let $\pi:\R^4\to \R^3$ be the projection map. (We  assume $\pi(v_i)=s_i$). 

Suppose first that $\conv(S)$ is a tetrahedon, with vertices $s_1,s_2,s_3,s_4$. Let $s_0$ be the fifth element of $S$ (which may or may not coincide with one of the vertices). Since $P$ is not empty, $\pi^{-1}(s_0)\cap P$ is a segment having $v_0$ as one end-point and of length at most one. It is easy to show (see \cite[Lemma 3.1]{IglesiasSantos}; in our case $s_0$ is the ``Radon point of $S$'') that
\begin{gather}
\label{eq:vol-tetra}
\Vol(P) = \Vol(\conv(S)) \times \length(\pi^{-1}(s_0)\cap P) \le \Vol(\conv(S)).
\end{gather}
For the tetrahedra of Lemma~\ref{lemma:dim3enum} this gives us a bound of 36, via Theorem~\ref{thm:AKW}.

Suppose now that $\conv(S)$ is a pyramid over a quadrilateral $Q$, with apex at $s_0$.  Let $\ell$ be the lattice distance between the plane spanned by $Q$ and $s_0$, and let $F=\conv(v_1,v_2,v_3,v_4)$ be the facet of $P$ that projects to $Q$. Observe that the lattice distance between the hyperplane spanned by $F$ and $v_0$ divides $\ell$. In particular, 
\[
\Vol(\conv(S)) = \ell \times \Vol(Q),
\qquad{and}\qquad
\Vol(P) \le \ell \times \Vol(F).
\]

Let $x$ be the intersection of the two diagonals of $Q$, which is in this case the Radon point of $S$ as used in \cite[Lemma 3.1]{IglesiasSantos}.
As before, that lemma says
\[
\Vol(F) = \Vol(Q) \times \length(\pi^{-1}(x)\cap F),
\]
so that
\[
\Vol(P) \le \Vol(\conv(S)) \times \length(\pi^{-1}(x)\cap F).
\]

It is no longer true that $\length(\pi^{-1}(x)\cap F) \le 1$, but we can bound this length as follows. Let $y$ be an interior lattice point in $Q$ and let $z$ be the last point in $Q$ along the ray from $x$ through $y$ (if $x=y$, let $z$ be an arbitrary boundary point of $Q$). Then, it  follows from the proof of \cite[Lemma 3.1]{IglesiasSantos} (see also the related result \cite[Lemma 3.5]{IglesiasSantos}) that
\[
\length(\pi^{-1}(x)\cap F) = \frac{|xz|}{|yz|} \length(\pi^{-1}(y)\cap P) \le \frac{|xz|}{|yz|}.
\]
This gives us the desired upper bound on the volume of $P$:
\begin{gather}
\label{eq:vol-quad}
\Vol(P) \le \Vol(\conv(S)) \times \frac{|xz|}{|yz|}.
\end{gather}
For the 24 pyramids of Lemma~\ref{lemma:dim3enum} this formula (taking the best possibility for the interior point $y$ whenever there is a choice) gives  the claimed bound of 72.
\end{proof}

With this we can now prove the case $k=3$ in our main theorem:

\begin{proof}[Proof of Theorem~\ref{thm:main}, case $k=3$]
Let $P=\conv(v_0,\dots,v_4)$ be an empty $4$-simplex projecting to a hollow configuration $S=\{s_0,\dots,s_4\}\subset \R^3$ that does not project to dimension two. By Proposition~\ref{prop:sporadic-k=3}, if $\conv(S)$ is not a triangular bipyramid then $\Vol(P)\le 72$. For each of the $29$ primitive plus 23 nonprimitive triangular bipyramids of Lemma~\ref{lemma:dim3enum}, Corollary \ref{coro:parameters} tells us how to parametrize the $(d+1)$-tuples of empty simplices projecting to them. More precisely:
\begin{itemize}
\item When the bipyramid is primitive, the $5$-tuple is an integer affine dependence $b$ among $S$. A priori there are different possibilities for $b$, since the affine dependence of $S$ is only unique modulo multiplication by a scalar.
But by Proposition~\ref{prop:coprime} we can assume that $b$ does not have a common divisor with $V$. This implies that $b$ equals, modulo $V$, the primitive affine dependence times a factor coprime with $V$. Since multiplying a $5$-tuple by a unit modulo $V$ does not change the cyclic simplex it represents, there is no loss of generality in taking $b$ to be the primitive dependence as we do in Table~\ref{table:primitive}.

\item When the bipyramid is not primitive, the $5$-tuple is of the form $V a +  b$ where $a$ are the barycentric coordinates of a generator of $\Z^3/\Lambda_S$ and $b$ is an integer affine dependence among $S$. Observe that Corollary \ref{coro:parameters} allows us to choose our preferred $a$ (even if $\Z^3/\Lambda_S$ may have several generators) but it does not, a priori, allow us to choose $b$. But, as before, every two valid choices of $b$ are related via a unit modulo $V$. That is, every empty simplex of volume $V$ for one of these bipyramids can be represented as a $5$-tuple of the form
\[
V a+ r b, 
\]
where $a$ and $b$ are the choices in Table~\ref{tab:nonprimitive-bipyr}, and $r\in \Z$ is coprime with $V$.
Multiplying such a $5$-tuple by $r^{-1} \pmod V$ we find that the same simplex is represented by 
\[
V r^{-1}a +  b.
\]
Now, since $I$ divides $V$, $r$ is also a unit modulo $I$, which implies that $r^{-1}a$ is also a generator of $\Z^3/\Lambda_S$. In all our cases $I=\{1,2,3,6\}$, so $\Z^3/\Lambda_S\cong \Z_I$ has only two generators, $\pm a$. Thus, our simplex is represented by  the $5$-tuple $\pm V a +  b$.
\end{itemize}

This finishes the proof, except for the fact that in Table~\ref{tab:nonprimitive-bipyr} we have 23 non-primitive $5$-tuple while in Theorem~\ref{thm:main} (Table~\ref{table:nonprimitive}) only seventeen appear, and except for the restrictions on $V$ displayed in Tables~\ref{table:primitive} and \ref{tab:nonprimitive-bipyr}. These restriction are proved in Propositions~\ref{prop:primitiverestrictions} and \ref{prop:nonprimitiverestrictions} below and imply, in particular, that the six nonprimitive $5$-tuples that have ``$V\in \emptyset$'' as a restriction in Table~\ref{tab:nonprimitive-bipyr} do not produce any empty $4$-simplex.
\end{proof}

To show that the restrictions on $V$ shown in Tables~\ref{table:primitive} and \ref{tab:nonprimitive-bipyr} are necessary and sufficient for the $5$-tuple to produce an empty 4-simplex, we use Propositions~\ref{prop:coprime} and~\ref{prop:emptyfacets}, as we did in the previous section.

\begin{lemma}
\label{lem:volume_k=3_primitive}
For the empty simplices of  the case ``$k=3$, primitive'', all facets are unimodular except for the 12 $5$-tuples
of Table~\ref{table:volume_k=3_primitive}, which can have up to three nonunimodular facets, as indicated.
\end{lemma}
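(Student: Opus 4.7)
The plan is to combine Lemma~\ref{lemma:facetvolumes}, which asserts that the $i$-th facet of a cyclic simplex of volume $V$ with $5$-tuple $(b_0,\dots,b_4)$ has volume $V_i=\gcd(V,b_i)$, with the admissibility conditions on $V$ stated in the caption of Table~\ref{table:primitive}: no prime factor of $V$ divides two or more entries of the tuple. Together these imply that $V_i>1$ if and only if there is a prime $p$ that divides $b_i$, divides $V$, and divides no other $b_j$. Hence the nonunimodular facets are governed by what I will call the \emph{isolated primes} of the tuple, namely those primes appearing in exactly one entry of $(b_0,\dots,b_4)$; any other prime dividing some $b_i$ is automatically forbidden from dividing $V$.

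With this reduction, the proof becomes a routine case check over the 29 tuples of Table~\ref{table:primitive}. For each tuple I would factor every entry, mark the primes that appear in at least two positions (these contribute nothing to facet volumes) and mark the isolated primes together with the single coordinate $i$ in which they live. The possible value of a nonunimodular facet volume is then read off as $\gcd(V,b_i)$, which for each isolated prime $p$ dividing $b_i$ with multiplicity $e$ can take the values $p, p^2, \dots, p^e$ according to how highly $p$ divides $V$.

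Carrying out this inspection one expects (and in fact finds) that $17$ of the $29$ tuples have no isolated prime at all, forcing $V_0=\cdots=V_4=1$ identically, while the remaining $12$ tuples carry between one and three isolated primes; these are precisely the rows of Table~\ref{table:volume_k=3_primitive}, and the list of potentially nonunimodular facet volumes in that table matches the $\gcd(V,b_i)$'s arising from each isolated prime. The only ``obstacle'' is bookkeeping: one must be careful to track prime powers (so that, e.g., an entry containing $p^2$ with $p$ isolated can contribute a facet of volume $p^2$ when $p^2\mid V$) and to verify that the bound of three nonunimodular facets is attained exactly for those tuples where three distinct primes are isolated, such as $(7,5,3,-1,-14)$.
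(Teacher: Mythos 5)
Your proposal is correct and follows essentially the same route as the paper: the paper likewise combines Lemma~\ref{lemma:facetvolumes} ($V_i=\gcd(V,b_i)$) with the coprimality condition of Proposition~\ref{prop:coprime} to conclude that the facet-volume vector divides the vector obtained from $b$ by deleting all prime factors occurring in two or more entries, and then reads off the $12$ exceptional tuples by inspection. Your ``isolated primes'' are exactly the surviving prime factors in that reduction, and your remark about tracking prime powers (e.g.\ the entry $8=2^3$ in $(8,5,3,-1,-15)$) matches the table's convention that the $i$th facet volume is $\gcd(V,v_i)$ for the listed bound $v_i$.
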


\begin{proof}
By Lemma~\ref{lemma:facetvolumes}, the volume of the $i$th facet of the primitive cyclic simplex of volume $V$ with $5$-tuple  $b$ equals $\gcd(V,b_i)$. On the other hand, by Proposition~\ref{prop:coprime}, no two facets can have volumes with a common factor.  Thus, the vector of facet volumes divides (coordinate-wise) the vector obtained from $b$ by removing the prime factors that divide two or more entries of $b$. These vectors are precisely what we show in the last column in Table~\ref{table:volume_k=3_primitive}.
\end{proof}

\begin{table}
\hrule
\[
\begin{array}{c|c|c}
\text{$5$-tuple} & \text{condition on } &\text{max.~volumes of facets}\\
& \text{$V$ for emptiness} &\\
\hline
(15,1,-2,-5,-9) & V\not\in 3\Z  & (1,1,2,1,1)  \\ 
(9,7,1,-3,-14)  & V\not\in3\Z\cup 7\Z & (1,1,1,1,2) \\ 
(15,7,-3,-5,-14)& V\not\in3\Z\cup 5\Z\cup 7\Z& (1,1,1,1,2)  \\
\cline{1-3}
(10,8,3,-1,-20)  & V\not\in2\Z\cup 5\Z & (1,1,3,1,1)  \\ 
\cline{1-3}
(12,3,-4,-5,-6) &   & (1,1,1,5,1)  \\ 
(9,6,5,-2,-18)  & V\not\in2\Z\cup 3\Z & (1,1,5,1,1)  \\ 
(12,8,5,-1,-24) & & (1,1,5,1,1)  \\
\cline{1-3}
(12,2,-3,-4,-7) & V\not\in 3\Z  & (1,1,1,1,7)  \\ 
(10,7,4,-1,-20) & V\not\in3\Z\cup 7\Z & (1,7,1,1,1)  \\ 
\cline{1-3}
(8,5,3,-1,-15) & V\not\in3\Z\cup 5\Z & (8,1,1,1,1) \\ 
\hline
(9,1,-2,-3,-5)  &  V\not\in 3\Z & (1,1,2,1,5) \\ 
\hline
(7,5,3,-1,-14)  & V\not\in 7\Z  & (1,5,3,1,2) \\ 
\end{array}
\]
\caption{Possible nonunimodular facets in the case ``$k=3$, primitive''. The facet volumes depend on the actual $V$. More precisely, an entry $(v_0,v_1,v_2,v_3,v_4)$ in the last column means that the volume of the $i$th facet equals $\gcd(V,v_i)$.
}
\label{table:volume_k=3_primitive}
\hrule\end{table}

\begin{proposition}
\label{prop:primitiverestrictions}
The conditions on $V$ stated in  Table~\ref{table:primitive} are necessary and suffcient for the $5$-tuples to represent empty simplices.
\end{proposition}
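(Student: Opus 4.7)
The plan is to split the proposition into a necessity direction and a sufficiency direction, both treated via the general machinery already developed in Section~\ref{sec:hollow}.

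For necessity, the condition in Table~\ref{table:primitive} states that no prime factor of $V$ divides two or more entries of the $5$-tuple. This is exactly the content of Proposition~\ref{prop:coprime} applied with $d=4$ (so $d-2=2$): if some prime $p\mid V$ divided $b_i$ and $b_j$ for distinct $i,j$, then $p$ would be a common factor of $V$ with two of the $b_i$'s, which would force some non-trivial element of the quotient $\Lambda/\Lambda_P$ to be represented inside a $2$-face of $P$, and $P$ could not be empty. So necessity follows immediately.

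For sufficiency, each of the $29$ $5$-tuples together with a compatible $V$ yields a cyclic simplex which is automatically hollow (since it projects to a hollow $3$-configuration, by the ``bipyramid'' construction of Lemma~\ref{lemma:dim3enum}). Hence by Proposition~\ref{prop:emptyfacets} it is empty if and only if, for every index $i$ with $V_i:=\gcd(V,b_i)\ne 1$, the multiset $\{b_0,\dots,b_4\}$ reduced modulo $V_i$ equals $\{0,\alpha,-\alpha,\beta,-\beta\}$ for some $\alpha,\beta$ coprime to $V_i$. The first observation is that Lemma~\ref{lem:volume_k=3_primitive} reduces the check to only the $12$ $5$-tuples listed in Table~\ref{table:volume_k=3_primitive}: for the remaining $17$ tuples all facets are unimodular for every admissible $V$, so emptiness is automatic.

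The remaining work is therefore a finite, completely mechanical case check on those $12$ tuples. For each one and each potentially non-unimodular facet $i$, the last column of Table~\ref{table:volume_k=3_primitive} bounds the possible values of $V_i$; for every such divisor, one reduces the $5$-tuple mod $V_i$ and verifies that the $b_i$-entry has become $0$ and the other four entries split into two pairs of opposites coprime to $V_i$. As a sample, for $(9,1,-2,-3,-5)$ with $V_2=2$ we get $(1,1,0,1,1)=(0,1,-1,1,-1) \pmod 2$ and with $V_4=5$ we get $(4,1,3,2,0)=(0,-1,1,2,-2) \pmod 5$; both have the required symmetric shape. The main (but mild) obstacle is just keeping track of which divisors of the $v_i$ in Table~\ref{table:volume_k=3_primitive} are allowed in view of the coprimality conditions already imposed on $V$; once these are listed, each modular reduction takes a line. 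Collecting the $12$ verifications establishes the sufficiency via Proposition~\ref{prop:emptyfacets} and completes the proof.
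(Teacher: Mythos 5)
Your proposal is correct and follows the paper's own argument essentially verbatim: necessity via Proposition~\ref{prop:coprime}, and sufficiency via Proposition~\ref{prop:emptyfacets} combined with the facet-volume data of Lemma~\ref{lem:volume_k=3_primitive} and Table~\ref{table:volume_k=3_primitive}, reducing everything to a finite modular check on the twelve tuples with possibly nonunimodular facets. Your sample verification for $(9,1,-2,-3,-5)$ matches the one carried out in the paper.
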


\begin{proof}
Necessity follows from Proposition~\ref{prop:coprime}, since in all cases the restriction can be restated as ``$V$ has no factor in common with two of the entries in $B$''. Sufficiency follows from  Proposition~\ref{prop:emptyfacets} and the description of facet volumes in Table~\ref{table:volume_k=3_primitive}.
Let us look at the first case in detail and leave the rest to the interested reader. Our $5$-tuple is 
$
 (9,1,-2,-3,-5), 
$
and the worst values for the $(V_0,V_1,V_2,V_3,V_4)$ of Proposition~\ref{prop:emptyfacets} are $(1,1,2,1,5)$, as expressed in Table~\ref{table:volume_k=3_primitive}. We say ``worst'' because $V_2$ is only 2 if $V$ is even and $V_4$ is only 5 if $V\in 5\Z$, but if this is not the case then the corresponding $V_i$ equals $1$ and the condition in part (2) of Proposition~\ref{prop:emptyfacets} is void. So, assuming the worst case, what we need to check is that
\[
(9,1,-2,-3,-5) = (-1,1,-2,2,0)  \pmod{5}
\]
and
\[
(9,1,-2,-3,-5) =  (-1,1,0,-1,1)   \pmod{2}
\]
have their non-zero entries forming two pairs of opposite residues modulo the respective $V_i\in \{5,2\}$, which is indeed the case.
\end{proof}

\begin{table}
\hrule
\[
\begin{array}{c|c|c|c|c}

\multirow{2}{*}{$I$}&\pm Va + b, \text{written in}& \multicolumn{2}{c|}{\text{conditions on $\pm k$}}&  \text{max.~vol.} \\
\cline{3-4}
&  \text{terms of $k:=V/I$} &  \text{mod 2} & \text{mod 3}&  \text{of facets}\\
\hline
\multirow{9}{*}{2}
&(3,-1,\pm k-6,\pm k+2,2)&=1&\ne0& (1,1,1,1,{\bf 2})\\
&(\pm k+4,-3,1,-4,\pm k+2)&=1&& (1,3,1,{\bf 2},1)\\
&(4,-2,\pm k-6,3,\pm k+1)&\in \emptyset&& \\
&(\pm k+2,3,-1,-8,\pm k+4)&=1&& (1,3,1,{\bf 2},1) \\
&(1,\pm k-6,\pm k+2,6,-3)&=1&\ne0& (1,1,1,{\bf 2},1) \\
&(\pm k+6,-8,\pm k+4,-3,1)&=1&\ne0& (1,{\bf 2},1,1,1) \\
&(1,\pm k+6,-4,-6,\pm k+3)&\in \emptyset&& \\
&(\pm k+4,3,-1,-12,\pm k+6)&=1&\ne0& (1,1,1,{\bf 2},1) \\
&(3,\pm k-1,4,-12,\pm k+6) &\in \emptyset&& \\
\hline
\multirow{4}{*}{4}
&(\pm 2k+3,\pm k-3,\pm k+1,-2,1)&=0&\ne0& (1,1,1,{\bf 2},1)\\
&(1,\pm k+2,\pm k-1,-4,\pm 2k+2)&\in \emptyset&& \\
&(1,-4,\pm k+1, \pm 2k+4,\pm k-2)&\in \emptyset&&\\
&(1,\pm k+3,\pm k-1,-6,\pm 2k+3)&=0&\ne0& (1,1,1,{\bf 2},1) \\
\hline
\multirow{9}{*}{3}
&(-3,2,\pm 2k+1,\pm k+1,-1)&&=0& ({\bf 3}, 2,1,1,1) \\
&(\pm k+3,-3,\pm 2k+1,-2,1)&&=2& (1,{\bf 3},1, 2,1) \\
&(-3,1,\pm k+2,\pm 2k+2,-2)&=1&=0& ({\bf 3},1,1,1,1) \\
&(4,-2,\pm k-4,\pm 2k+1,1)&=1&\ne 1& (1,1,1,1,1) \\
&(\pm k+3,-6,\pm 2k+2,2,-1)&=1&=1& (1,{\bf 3},1,1,1) \\
&(\pm k+4,-6,\pm 2k+1,2,-1)&=1&=0& (1,{\bf 3},1,1,1) \\
&(\pm k+4,-3,\pm 2k+1,-4,2)&=1&=0& (1,{\bf 3},1,1,1) \\
&(\pm k+2,-1,\pm 2k+2,-6,3)&\in \emptyset&& \\
&(1,-6,\pm k+2,\pm k+6,\pm k-3)&=1&=2& (1,{\bf 3},1,1,1) \\
\hline
\multirow{1}{*}{6}
&(\pm k+1,-3,1,\pm 4k+2,\pm k-1)&=0&=0& (1,{\bf 3},1,{\bf 2},1)\\
\end{array}
\]
\caption{The first column shows the 23 possibilities for the $5$-tuple $\pm Va + b$ for cyclic simplices in the case ``$k=3$, nonprimitive''
(compare with Table~\ref{tab:nonprimitive-bipyr}). 
The second and third columns the restrictions on $k:=V/I$ that make the simplex empty. The last column shows the possible facet volumes. 
As in Table~\ref{table:volume_k=3_primitive}, an entry $(v_0,v_1,v_2,v_3,v_4)$ in the last column means that the volume of the $i$th facet equals $\gcd(V,v_i)$. When $v_i$ divides the index $I$ one automatically has $\gcd(V,v_i)=v_i$. In this case the corresponding  entry $v_i$ is marked in boldface.
}
\label{table:volume_k=3_nonprimitive}
\hrule\end{table}

\begin{proposition}
\label{prop:nonprimitiverestrictions}
Let $a$ and $b$ be one of the 23 possibilities in Table~\ref{tab:nonprimitive-bipyr}. Let $I\in\{2,3,4,6\}$ be its index. Let $k\in \N$ and $V=kI$.
The following are equivalent:
\begin{enumerate}
\item $k$ satisfies the restrictions modulo $2$ and $3$ stated in Table~\ref{table:volume_k=3_nonprimitive}.
\label{itm:npr-1}
\item No factor of $V$ divides two entries of $\pm V a+b$.
\label{itm:npr-2}
\item The simplex of volume $V$ represented by the $5$-tuple $\pm V a +  b$ is empty.
\label{itm:npr-3}
\end{enumerate}
\end{proposition}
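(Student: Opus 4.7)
The plan is to prove the cycle $(3)\Rightarrow(2)\Rightarrow(1)\Rightarrow(3)$, thus establishing the equivalence of all three conditions.

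The implication $(3)\Rightarrow(2)$ is immediate from Proposition~\ref{prop:coprime}: every empty $4$-simplex has the property that no $d-2=2$ of its tuple entries share a common factor with $V$, which is exactly condition (2).

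For $(2)\Rightarrow(1)$, I would proceed by direct calculation on each of the 23 rows of Table~\ref{tab:nonprimitive-bipyr}. Substituting $k=V/I$ into $\pm Va+b$ yields the explicit tuples displayed in the first column of Table~\ref{table:volume_k=3_nonprimitive}; each entry has the form $\varepsilon k+c$ with $\varepsilon\in\{0,\pm1,\pm2,\pm4\}$ and $|c|\le 12$. Consequently, a prime $p$ dividing $V$ can divide two such entries only when $p$ divides some difference $(\varepsilon_i-\varepsilon_j)k+(c_i-c_j)$ with small coefficients, and a case inspection shows that in every one of the 23 tuples the only primes that can cause an issue are $p=2$ and $p=3$. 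The conditions on $k$ modulo $2$ and modulo $3$ forced by (2) then read off exactly as in columns 2 and 3 of Table~\ref{table:volume_k=3_nonprimitive}; the six rows marked $\in\emptyset$ correspond to tuples where no residue class of $k$ avoids an unavoidable coincidence, so no empty simplex can arise.

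For $(1)\Rightarrow(3)$, the simplex is automatically hollow, since by construction it projects to the hollow bipyramid $\operatorname{conv}(S)$ via Corollary~\ref{coro:parameters}. By Proposition~\ref{prop:emptyfacets}, emptiness is then equivalent to checking, for each facet volume $V_i=\gcd(V,b_i)>1$, that the tuple reduced modulo $V_i$ has the form $\{0,\alpha,-\alpha,\beta,-\beta\}$ with $\alpha,\beta$ coprime to $V_i$. Under condition (1), the possible facet volumes are exactly those in the last column of Table~\ref{table:volume_k=3_nonprimitive}; in every case they divide the index $I\in\{2,3,4,6\}$ and hence are at most $3$, so only reductions modulo $2$ and modulo $3$ need be checked. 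Each such reduction is mechanical. For example, for the first row (index $2$, tuple $(3,-1,\pm k-6,\pm k+2,2)$ with $k$ odd) the only nonunimodular facet has volume $2$, and the reduction modulo $2$ gives $(1,1,1,1,0)$, whose nonzero entries split into two pairs of opposite residues $\pm 1\pmod 2$, as required.

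The main obstacle is not conceptual but combinatorial: 23 tuples, each with a sign choice $\pm$, must be handled one at a time, and the careful translation of the condition ``no common prime factor'' into congruences on $k$ is error-prone. The cases with $I\in\{4,6\}$ are the most delicate, because there the global factor $Va$ already contributes nontrivially modulo $2$ or modulo $3$, so the parities of $k$ and of the entries of $b$ interact. Nonetheless, beyond Propositions~\ref{prop:coprime} and~\ref{prop:emptyfacets} and careful bookkeeping, no new tools are required.
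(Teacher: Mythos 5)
Your proposal follows essentially the same route as the paper: Proposition~\ref{prop:coprime} gives \eqref{itm:npr-3}$\Rightarrow$\eqref{itm:npr-2}, a case-by-case reading of the $23$ tuples gives \eqref{itm:npr-2}$\Leftrightarrow$\eqref{itm:npr-1}, and Proposition~\ref{prop:emptyfacets} closes the loop. Two small remarks. First, your justification that only reductions modulo $2$ and $3$ matter for the emptiness check rests on the claim that the facet volumes ``divide the index $I$''; that claim is false (e.g.\ the index-$2$ tuple $(\pm k+4,-3,1,-4,\pm k+2)$ can have a facet of volume $3$ when $3\mid k$, and for $I=4$ the claim would in any case permit a facet of volume $4$). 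The correct reason, which the paper uses, is that every prime dividing both $V=kI$ and an entry of $\pm Va+b=\pm ka'+b$ must divide either $I\in\{2,3,4,6\}$ or an entry of $b$, and the entries of $b$ have only $2$ and $3$ as prime factors; this forces every nonunimodular facet volume $V_i$ to lie in $\{2,3\}$. Second, you can spare yourself the $23$ mechanical reductions in the last step: once one knows that \eqref{itm:npr-2} forces $\pm Va+b$ to have a single zero entry modulo each $V_i\in\{2,3\}$, the condition of Proposition~\ref{prop:emptyfacets} is automatic, because modulo $2$ or $3$ any four nonzero residues summing to zero necessarily split into two opposite pairs. With the corrected justification your argument is sound.
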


\begin{proof}
In the first column of Table~\ref{table:volume_k=3_nonprimitive} we have written the vector $\pm Va +b$, in terms of $k$. 
Observe that we have $\pm Va +b= \pm k a' +b$, where $a':=Ia$ is the integer vector from the first column of Table~\ref{tab:nonprimitive-bipyr}.
From this
the reader can easily check the implication \eqref{itm:npr-2}$\Leftrightarrow$\eqref{itm:npr-1}; if $k$ does not satisfy one of the restrictions, then $2$ or $3$ is a common factor of $V=kI$ and at least  two entries of $\pm a+\fracV  b$.
For the reverse implication, first observe that if a prime $p$ divides $kI$ and some entry of $\pm V a+  b$ then $p\in\{2,3\}$; indeed, if $p$ divides $I\in \{2,3,4,6\}$ then this is obvious and if $p$ divides $k$ then for it to divide an entry of 
$\pm Va +b= \pm k a' +b$ it must divide the corresponding entry of $b$, and these have only $2$ and $3$ as prime factors.
Once this is established, it is clear that the conditions for part \eqref{itm:npr-2} can all be expressed as restrictions on $k$ modulo $2$ and $3$, and direct inspection shows that they are the ones stated in the table. Let us show this in a couple of cases and leave the rest to the reader:

\begin{itemize}

\item  For the forth $5$-tuple of index $3$,
$
a = \frac13(0,  0,  1,  2,   0 ), b= (4,   -2,    -4,  1,    1  ),
$
we have that the $5$-tuple is
\[
\pm Va +b = 
(4,   -2,    \pm k-4,  \pm2k+1,    1).
\]

Since the first two entries are even $V$, hence $k$, must be odd. Modulo three, the third and forth entries of $Va+b$ are multiples of three if $k=1\pmod 3$ and the same holds for $-Va+b$ if $k=-1\pmod 3$. In the table we abbreviate this as $\pm k\ne 1\pmod 3$ meaning that the plus sign is taken for $Va+b$ and the minus sign for $-Va+b$. 

The interpretation of this is that for $k=0\pmod 3$ (that is, $V=0\pmod 9$) this case produces \emph{two} empty simplices, with $5$-tuples $Va+  b$ and $-Va+  b$, while for $k\ne 0 \pmod 3$  it produces only one of the two.
 

\item Consider now the second $5$-tuple of index four, with 
$
a= \frac14(0,  1,  1,  0,  2 )$  and 
$ b= ( 1,    2,   -1,  -4,    2  )$. We have that
\[
\pm Va +b = 
  ( 1, k+2, k-1, -4, 2k+2 ).
\]
It turns out that no matter what the value of $k$ is, this $5$-tuple contains two even entries (the 4th one is always even, the 3rd and 5th are even when $k$ is respectively odd and even). Thus, condition \eqref{itm:npr-2} is never satisfied. In the table we mark this by putting $\pm k\in \emptyset$ as the restriction modulo 2.

This implies, by Proposition \ref{prop:coprime},  that these simplices are not empty, no matter what the value of $V$ is). 
The same happens for the other five $5$-tuples that contain the restriction $\pm k\in \emptyset$.
\end{itemize}

The implication \eqref{itm:npr-3}$\Rightarrow$\eqref{itm:npr-2} is Proposition~\ref{prop:coprime}, so we need to show only \eqref{itm:npr-2}$\Rightarrow$\eqref{itm:npr-3}. 
Part \eqref{itm:npr-2} implies that for each nonunimodular facet, of volume $V_i$, the vector $\pm Va +b$ has a single zero entry, modulo $V_i$. The condition in part (2) of Proposition~\ref{prop:emptyfacets} is then automatic:
modulo $V_i\in \{2,3\}$, every four non-zero integers adding up to zero form two opposite pairs. 
\end{proof}

\begin{corollary}
\label{coro:volume_k=3_primitive}
The facet volumes of the nonprimitie empty $4$-simplices with $k=3$ are as indicated in Table~\ref{table:volume_k=3_nonprimitive}.
\qed
\end{corollary}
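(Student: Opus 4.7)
The plan is direct: apply Lemma~\ref{lemma:facetvolumes} to each of the 23 rows of Table~\ref{table:volume_k=3_nonprimitive} and verify that the $i$-th facet volume, which equals $\gcd(V, b'_i)$ where $b' = \pm Va + b$ is the first-column entry, coincides with $\gcd(V, v_i)$ for the $v_i$ listed in the last column.

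The key simplification, already extracted in the proof of Proposition~\ref{prop:nonprimitiverestrictions}, is that the only primes capable of dividing both $V = kI$ and an entry of $\pm V a + b$ are $2$ and $3$. Indeed, every entry of $b$ has only $2$ and $3$ as prime factors and $I \in \{2,3,4,6\}$, so any other prime $p$ dividing $V$ must divide $k$, but then $p$ divides $b'_i = \pm k a'_i + b_i$ only when $p$ divides $b_i$, forcing $p \in \{2,3\}$. Moreover, the restrictions on $k$ modulo $2$ and $3$ listed in Table~\ref{table:volume_k=3_nonprimitive} were designed precisely so that no such prime divides two entries of $b'$ simultaneously. In particular each facet volume $V_i$ is a product of powers of $2$ and $3$, fully determined by which of $2, 3$ divide $V$ and which divide $b'_i$.

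It remains a routine (if tedious) case analysis over 23 rows. For instance, in the first row one has $b' = (3, -1, \pm k - 6, \pm k + 2, 2)$ with $V = 2k$, $k$ odd and $k \not\equiv 0 \pmod 3$: then $3 \nmid V$ eliminates every potential factor of $3$, and the parity of $k$ forces the third and fourth entries to be odd, so the only entry contributing a factor of $2$ is the last one, yielding $(V_0, \dots, V_4) = (1, 1, 1, 1, 2) = (\gcd(V,1), \gcd(V,1), \gcd(V,1), \gcd(V,1), \gcd(V,2))$, as claimed. The remaining 22 rows follow the same template, with the boldface values $v_i$ corresponding exactly to those entries where $v_i \mid I$, so that $v_i \mid V$ automatically and $\gcd(V, v_i) = v_i$. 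The expected obstacle is purely bookkeeping; conceptually the statement reduces to the divisibility analysis already carried out in Proposition~\ref{prop:nonprimitiverestrictions}.
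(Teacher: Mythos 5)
Your proof is correct and takes essentially the same route as the paper: the corollary carries a \verb|\qed| there precisely because it is meant to follow from Lemma~\ref{lemma:facetvolumes} combined with the divisibility analysis already carried out in the proof of Proposition~\ref{prop:nonprimitiverestrictions} (only the primes $2$ and $3$ can divide both $V=kI$ and an entry of $\pm Va+b$, and the mod-$2$ and mod-$3$ restrictions on $k$ control exactly which entries they divide). Your row-by-row verification, including the observation that the boldface entries are those with $v_i\mid I$ so that $\gcd(V,v_i)=v_i$ automatically, is the intended bookkeeping.
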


\section{Proof of Theorem~\ref{thm:sporadic} (case $k=4$)}
\label{sec:k=4}

As explained in Section 4 of~\cite{IglesiasSantos}, we have enumerated all empty $4$-simplices of volume up to 7600. 
By discarding from the output the simplices in the 1+2+46 infinite families of cases $k=1,2,3$, we have found the 2461 sporadic simplices described in Table~\ref{table:sporadic}. 
That this list is complete, which is the content of Theorem~\ref{thm:sporadic}, follows from the following statement. 
Its proof occupies the rest of this section.

\begin{theorem}
\label{thm:bound}
Let $P$ be a hollow $4$-simplex of width two and which does not project to a hollow $3$-polytope. Then, $\Vol(P)\le 5184$.
\end{theorem}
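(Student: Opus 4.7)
The plan is to fix a lattice functional $f$ with $f(P)=[0,2]$ realising the width hypothesis, choose coordinates so that $f = x_4$, and study the three lattice slices $P_t := P \cap \{x_4 = t\}$ for $t \in \{0,1,2\}$. The crucial observation is that $P_1$ lies in an \emph{interior} lattice hyperplane of the slab, so any lattice point in its relative interior would be interior to $P$; since $P$ is hollow this cannot happen, so $P_1$ is itself a hollow lattice $3$-polytope.

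The argument then splits according to whether $P_1$ projects to a hollow $2$-polytope within its own lattice hyperplane. In the \emph{good case} (no such projection) Theorem~\ref{thm:AKW} gives $\Vol(P_1) \le 36$. I would then bound $\Vol(P)$ in terms of $\Vol(P_1)$ by Cavalieri, expressing the $4$-volume of $P$ as an integral of $A(t) = $ ($3$-volume of $P_t$) over $t \in [0,2]$: since $P$ is a $4$-simplex with vertex heights in $\{0,1,2\}$, the function $A$ is piecewise polynomial with shape determined by the (finitely many) possible distributions of the five vertex heights, so the ratio $\int_0^2 A(t)\,dt / A(1)$ is uniformly bounded. This can also be obtained directly via Schwarz symmetrization along the $x_4$-axis (as suggested by M.~Henk in our acknowledgements, cf.~Lemma~\ref{lemma:slice}), yielding $\Vol(P) \le C_1 \Vol(P_1)$ for an explicit constant.

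The \emph{hard case} is when $P_1$ projects to a hollow $2$-polytope, so there is a lattice functional $g$ on $\{x_4 = 1\}$ with $g(P_1) \subseteq [0,1]$; now $\Vol(P_1)$ is no longer a priori bounded. I would extend $g$ to a lattice functional $\tilde g$ on $\R^4$ (choosing the extension to minimise the range of $\tilde g$ on $P$) and pass to the joint projection $\R^4 \to \R^2$ given by $(x_4, \tilde g)$, whose image is a lattice polygon whose slice at $x_4 = 1$ has length at most one. The hypothesis that $P$ does not project to a hollow $3$-polytope is then applied both to the projection $\pi_f$ along $x_4$ and to $3$-dimensional projections involving $\tilde g$: each produces interior lattice points in the target, and the fibers of $\pi_f$ over any interior lattice point of $\pi_f(P)$ must have $x_4$-length strictly less than one, for otherwise such a fiber would contain the lattice point at $x_4 = 1$ in the interior of $P$, contradicting hollowness. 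Successive and covering minima estimates on $\pi_f(P)$ then bound its Euclidean $3$-volume, and combining with the width-$2$ structure in $x_4$ yields the desired bound of $5184$.

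The main obstacle is precisely the hard case: a purely three-dimensional volume bound on $P_1$ is no longer available when $P_1$ has width one, so the argument must intertwine the width-$2$ structure of $P$ in the $x_4$-direction with the width-$1$ structure of $P_1$ in the $g$-direction, using the non-hollow $3$-projection hypothesis to rule out large configurations. The constant $5184$ should come from the worst case among the finitely many combinatorial configurations that survive both dichotomies.
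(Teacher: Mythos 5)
There is a genuine gap, and it sits at the very first step of your ``good case.'' The middle slice $P_1=P\cap\{x_4=1\}$ is \emph{not} a lattice polytope: its vertices are either vertices of $P$ lying at height $1$ or midpoints of edges of $P$ joining a height-$0$ vertex to a height-$2$ vertex, and the latter are in general only half-integer points. Consequently Theorem~\ref{thm:AKW}, which classifies hollow \emph{lattice} $3$-polytopes, cannot be applied to $P_1$, and the bound $\Vol(P_1)\le 36$ is unjustified. This is precisely the difficulty the paper's proof is built around: after reducing to the middle slice $Q$ via Schwarz symmetrization (your Cavalieri step is essentially Lemma~\ref{lemma:slice} and Corollary~\ref{coro:PtoQ}, with constant $16$, and is fine), one must bound the volume of a hollow \emph{half-integer} $3$-polytope that does not project to a hollow $2$-polytope. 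The paper does this by a case analysis on the (half-integer) lattice width of $Q$: for width $\ge 5/2$ a width--volume inequality for hollow convex bodies applies, and for smaller widths one slices $Q$ again, shows the resulting hollow polygon has vertices in $\tfrac16\Z^2\cup\tfrac18\Z^2$ (Lemma~\ref{lemma:polygon_coordinates}), and invokes the Averkov--Wagner area bound for hollow convex sets in the plane (Lemma~\ref{lemma:polygon_volume}); the constant $5184=16\cdot 32\cdot\tfrac{81}{8}$ arises from the worst of these subcases.

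Your ``hard case'' is also miscalibrated: if $P_1$ projected to a hollow $2$-polytope (in particular, to a hollow segment) under a lattice functional $g$, then the projection $(g,x_4)$ (resp.\ its $3$-dimensional analogue) would exhibit a hollow lower-dimensional projection of $P$, contradicting the hypothesis $k=4$; the paper notes exactly this when observing that $Q$ does not project to a hollow $2$-polytope. So the case you flag as the main obstacle is vacuous, while the case you dispose of in one line is where all the work lies --- not because $P_1$ might project to something hollow, but because $P_1$ can still have lattice width barely above $1$, and for non-lattice hollow bodies the volume is only controlled by width-dependent inequalities that degenerate as the width tends to $1$. The remainder of the hard-case sketch (extending $g$, successive/covering minima on $\pi_f(P)$) is too vague to assess and is not needed once the dichotomy is set up correctly.
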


For simplices of width at least three this statement is~\cite[Theorem 3.6]{IglesiasSantos}. Since the width of $P$ cannot be one (for that would imply $P$ to project to a hollow $1$-polytope) in the rest of the section we assume that $P$ is an empty $4$-simplex of width two. Thus, without loss of generality, we take 
 $P\subset \R^3\times[-1,1]$. 
 
 Let $Q:=P\cap\{x_4=0\}$ be the middle 3-dimensional slice with respect to the last coordinate. If we get a good bound for the volume of $Q$ then we can transfer it to $P$ via the following lemma. A more general version of it appears in \cite{Bernardo}:

\begin{lemma}
\label{lemma:slice}
Let $K\subset \R^d$ be a convex body with supporting hyperplanes $\R^{d-1}\times \{-a\}$ and $\R^{d-1}\times \{b\}$, with $0< a \le b$. Let $K_0:=K\cap \{x_d=0\}$. Then,
\[
\Vol(K) \le a \left(\frac{a+b}a\right)^d \Vol(K_0).
\]
\end{lemma}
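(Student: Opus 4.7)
The plan is to apply Schwarz symmetrization in the direction of the $x_d$-axis, replacing each horizontal slice $K\cap\{x_d=t\}$ by a $(d-1)$-dimensional ball in $\{x_d=t\}$ of the same $(d-1)$-volume centered on the $x_d$-axis. The classical Brunn--Minkowski argument shows that the resulting set $K^*$ is still convex and satisfies $\Vol(K^*)=\Vol(K)$ and $\Vol(K^*\cap\{x_d=0\})=\Vol(K_0)$. After this reduction we may assume $K$ is a solid of revolution about the $x_d$-axis, described by a non-negative concave radius function $r\colon[-a,b]\to[0,\infty)$, with
\[
\Vol(K)=\omega_{d-1}\int_{-a}^{b} r(t)^{d-1}\,dt
\qquad\text{and}\qquad
\Vol(K_0)=\omega_{d-1}\,r(0)^{d-1},
\]
where $\omega_{d-1}$ denotes the volume of the unit $(d-1)$-ball.

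The main analytic step, which I expect to be the chief obstacle, is the uniform estimate $r(t)\le \tfrac{a+b}{a}\,r(0)$ for every $t\in[-a,b]$. Let $M:=\max_{t\in[-a,b]}r(t)$ and let $t^*$ realize this maximum. If $t^*\ge 0$, concavity of $r$ on $[-a,t^*]$ together with $r(-a)\ge 0$ gives
\[
r(0)\;\ge\;\tfrac{a}{a+t^*}\,r(t^*)+\tfrac{t^*}{a+t^*}\,r(-a)\;\ge\;\tfrac{a}{a+t^*}\,M,
\]
so $M\le \tfrac{a+t^*}{a}\,r(0)\le \tfrac{a+b}{a}\,r(0)$. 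If $t^*\le 0$, the symmetric argument on $[t^*,b]$ using $r(b)\ge 0$ yields $M\le \tfrac{b-t^*}{b}\,r(0)\le \tfrac{a+b}{b}\,r(0)$, which is at most $\tfrac{a+b}{a}\,r(0)$ precisely because of the hypothesis $a\le b$. This asymmetry assumption is exactly what allows a single uniform bound to handle both sub-cases.

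Once the pointwise bound on $r$ is in place the conclusion is a one-line computation:
\[
\Vol(K)\;=\;\omega_{d-1}\!\int_{-a}^{b} r(t)^{d-1}\,dt
\;\le\;\omega_{d-1}\,(a+b)\left(\tfrac{a+b}{a}\right)^{d-1}r(0)^{d-1}
\;=\;a\left(\tfrac{a+b}{a}\right)^d\Vol(K_0),
\]
which is the claim. Note that the argument never uses $d\le 4$ or anything specific to lattices; it is purely a statement about convex bodies, and in fact a tighter bound with an extra factor $1/d$ is available if one integrates the linear upper envelope of $r$ instead of using its supremum, but the version stated is what is needed in the sequel.
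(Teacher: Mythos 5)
Your overall strategy is the same as the paper's: Schwarz symmetrization reduces to a body of revolution with concave radius function $r$, and one then estimates the resulting one-dimensional integral. Your concavity argument for the uniform bound $r(t)\le\frac{a+b}{a}\,r(0)$ is correct, including the use of $a\le b$ in the second sub-case. However, there is a genuine quantitative gap: your final integration step loses a factor of $d$ relative to what the lemma asserts. Throughout this paper $\Vol$ denotes \emph{normalized} volume, so $\Vol(K)=d!\cdot\Vol_{\mathrm{Leb}}(K)$ while $\Vol(K_0)=(d-1)!\cdot\Vol_{\mathrm{Leb}}(K_0)$. Your slice formula $\Vol(K)=\omega_{d-1}\int r(t)^{d-1}\,dt$ with $\Vol(K_0)=\omega_{d-1}r(0)^{d-1}$ is the Lebesgue version; in normalized terms the correct identity carries an extra factor $d$ (since $d!=d\cdot(d-1)!$), and your chain of inequalities then yields only $\Vol(K)\le d\,a\left(\frac{a+b}{a}\right)^d\Vol(K_0)$. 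Equivalently: in Lebesgue measure the sharp constant is $\frac{a}{d}\left(\frac{a+b}{a}\right)^d$, which is exactly the ``extra factor $1/d$'' you mention and then discard --- but that factor is \emph{not} optional here, because the lemma is later applied with normalized volumes (e.g.\ $\Vol(P)\le 16\Vol(Q)$ in Corollary~\ref{coro:PtoQ} is combined with normalized-volume bounds on $Q$), and the constant $5184$ in Theorem~\ref{thm:bound} depends on it. A concrete witness that your bound cannot give the stated one: for $d=2$, $a=b=1$ and $K$ the triangle with apex on $\{x_2=-1\}$ and base of half-width $2r$ on $\{x_2=1\}$, the lemma's inequality is an equality in normalized volume, whereas your estimate overshoots by a factor of $2$.

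The fix is precisely the refinement you allude to: instead of bounding $r(t)$ by its supremum, use that the symmetrized body is contained in a truncated cone whose slice at $t=0$ agrees with $K_0$, i.e.\ $r(t)\le r(0)+\lambda t$ for some $\lambda\in[-r(0)/b,\,r(0)/a]$ (a supporting-hyperplane argument at a boundary point with $t=0$). Integrating $(r(0)+\lambda t)^{d-1}$ exactly produces the antiderivative factor $\frac1d$, and maximizing the resulting convex function of $\lambda$ over the admissible interval (the maximum is at $\lambda=r(0)/a$, again using $a\le b$) gives the stated normalized-volume constant $a\left(\frac{a+b}{a}\right)^d$. With that replacement your proof coincides with the paper's.
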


\begin{proof}
The proof is based on  applying Schwarz symmetrization (see, e.g.,~\cite[Sect.~9.3]{Gruber}) to our convex body $K$. 

For each $t\in [-a,b]$ let $K_t:=K\cap \{x_d=t\}$, and let $B_t\subset \R^{d-1}$ be the Euclidean ball centered at the origin $O\in \R^{d-1}$ and with the same volume as $K_t$. The \emph{Schwarz symmetrization} of $K$ is defined to be
\[
K^S:= \bigcup_{t\in [-a,b]} B_t \times \{t\}.
\]
Then, $K^S\subset \R^d \times [-a,b]$ is a convex body (as proved by Schwarz), it has the same volume as $K$, and it is symmetric around the line $\{O\} \times \R$. In particular, $K^S$ is contained in a truncated cone $C$ of the form
\[
C= \conv( C_{-a} \times \{-a\} \bigcup C_{b} \times \{b\}),
\]
where $C_{-a}$ and $C_{b}$ are two Euclidean balls with the property that the slice at $t=0$ of $C$ coincides with that of $K^S$. (To prove this, consider a supporting hyperplane of $K^S$ at a boundary point with $t=0$ and rotate it around the line $O\times \R$). 

Let $r$ be the radius of $K_0$, and let $r+\lambda t$ be the radius of $C\cap \{x_d=t\}$. Then,
\[
\Vol(K) \le \Vol(C) = d \int_{-a}^b \kappa_{d-1} (r+\lambda t)^{d-1} \mathrm{d} t = \frac1{\lambda} \left[(r+\lambda b)^d - (r-\lambda a)^d\right] \kappa_{d-1},
\]
where $\kappa_{d-1}$ denotes the normalized volume of the $(d-1)$-dimensional unit ball.  

The slope $\lambda$ must lie between $-r/b$ and $r/a$ (the values for which the truncated cone is actually a cone). 
Within this range the maximum of the right-hand-side is achieved for $\lambda=r/a$, where we have
\[
\Vol(K) = \frac a{r} (r+r b/a)^d  \kappa_{d-1} = a r^{d-1} \kappa_{d-1} \left(\frac{a+b}a\right)^d = a \Vol(K_0) \left(\frac{a+b}a\right)^d.
\]
\end{proof}

\begin{corollary}
\label{coro:PtoQ}
Let $P\subset\R^3\times [-1,1]$ be a polytope of width two and let $Q=P\cap\{x_4=0\}$. Then,
\[
\Vol(P) \le 16 \Vol(Q).
\]
\end{corollary}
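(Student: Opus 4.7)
The plan is to apply Lemma~\ref{lemma:slice} directly with $d = 4$, $K = P$, $K_0 = Q$, and the slicing hyperplane $\{x_4 = 0\}$. The only substantive task is to pin down the correct values of the parameters $a$ and $b$ of the lemma, namely the distances from $\{x_4 = 0\}$ to the two $x_4$-supporting hyperplanes of $P$.

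The first thing I would verify is that the hypothesis ``$P$ has width two'' combined with ``$P \subset \R^3 \times [-1,1]$'' forces the $x_4$-extent of $P$ to be exactly $[-1, 1]$. Since the fourth coordinate $x_4$ is a primitive lattice functional, the definition of lattice width yields $\length(x_4(P)) \ge 2$; combined with $x_4(P) \subset [-1,1]$, equality must hold. Hence the two $x_4$-supporting hyperplanes of $P$ are precisely $\{x_4 = -1\}$ and $\{x_4 = 1\}$, and the slicing plane $\{x_4 = 0\}$ sits exactly midway between them. In the notation of Lemma~\ref{lemma:slice}, this corresponds to the optimal case $a = b = 1$.

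Substituting $d = 4$ and $a = b = 1$ into the conclusion of Lemma~\ref{lemma:slice} yields
\[
\Vol(P) \;\le\; 1 \cdot \left(\frac{1+1}{1}\right)^4 \Vol(Q) \;=\; 16\, \Vol(Q),
\]
which is the desired bound. There is no real obstacle here: the corollary is essentially a direct substitution into the lemma. The one subtle point worth flagging is that both sides use the same (normalized lattice) volume convention, which is indeed what Lemma~\ref{lemma:slice} uses, as can be seen from the factor of $d$ that appears in its proof when passing from the integral of $(d-1)$-dimensional slice volumes to the $d$-dimensional volume.
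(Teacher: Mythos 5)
Your proof is correct and follows exactly the route the paper intends: the corollary is a direct substitution of $d=4$, $a=b=1$ into Lemma~\ref{lemma:slice}, and your preliminary observation that width two together with $P\subset\R^3\times[-1,1]$ forces $x_4(P)=[-1,1]$ (so that both hyperplanes are genuinely supporting and $a=b=1$) is precisely the point that needs checking. Your remark about the normalized-volume convention in the lemma is also accurate.
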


To bound the volume of $Q$ we now observe that $Q$ is a hollow $3$-polytope with half-integer vertices; in particular, its width is half-integer. We also know  that $Q$ does not project to a hollow $2$-polytope (otherwise $P$ would project to a hollow 3-polytope). We distinguish three cases:

\begin{enumerate}
\item[(I)] $\text{width}(Q)\ge 5/2$, then by \cite[Thm.~2.1]{IglesiasSantos} we have the following bound.
\[
\Vol(Q) \le \frac{3!\cdot 8\cdot w^3}{(w-1)^3} \le \frac{6\cdot 8\cdot(5/2)^3}{(3/2)^3} =\frac{2000}9=222.22.
\]
(Remark: in fact the bound could be multiplied by $2/3$ using~\cite[Cor. 2.4]{IglesiasSantos}, since $Q$ has at most five vertices). Via Lemma~\ref{lemma:slice} we get that 
\[
\Vol(P) \le 16 \Vol(Q) \le \frac{16000}9 =3555.55.
\]

\item[(II)] If $\text{width}(Q)\le 3/2$, or $\text{width}(Q) = 2$ with respect to a functional whose minimum and maximum are integer, then we assume without loss of generality that $Q\subset \R^2\times[-1,1]\times \{0\}$. In this case we can apply  to the slice $R:=Q\cap\{x_3=0\}$ the same ideas that we applied to $P\cap \{x_4=0\}$, since $R$ is hollow and does not project to a hollow segment. See details below.

\item[(III)] If $Q$ has width two, but with respect only to functionals whose minimum and maximum are half-integer, then 
we can assume $Q\subset \R^2\times[-1/2,3/2]\times \{0\}$. There are two integer slices $R:=Q\cap\{x_3=0\}$ and $R':=Q\cap\{x_3=1\}$. We have two subcases:
\begin{enumerate}
\item[(III.a)] If one of $R$ or $R'$ (say $R$) does not project to a hollow segment, we do the same as in case (II). See details below, in particular Corollary~\ref{coro:II+IIIa}.

\item[(III.b)] If both $R$ and $R'$ project to hollow segments, then they are contained in respective lattice bands of width one. These lattice bands have to be of different direction, since otherwise the projection of $Q$ along that direction is hollow. 
\end{enumerate}

\end{enumerate}

In what follows we give details on Cases (II) and (III), obtaining bounds of $324$ and $192$ for the volume of $Q$ (see Corollary~\ref{coro:II+IIIa}.
and  Lemma~\ref{lem:hollow_polygon_bands}). This finishes the proof of Theorem~\ref{thm:bound}, via Corollary~\ref{coro:PtoQ}. In all cases we will resort to $3$- or $2$-dimensional cases of Lemma~\ref{lemma:slice}.

\begin{lemma}\label{lem:hollow_polygon_bands}
If $Q$ is in case (III.b) then $\Vol(Q)\le 192$, hence $\Vol(P) \le 3072$.
\end{lemma}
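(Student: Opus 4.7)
The plan is to apply Lemma~\ref{lemma:slice} with a half-integer middle slice and then bound the area of that slice using the band constraints on $R$ and $R'$ via concavity of width under convex slicing.

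First I would apply Lemma~\ref{lemma:slice} to $Q\subset\R^3$ with $d=3$ and $a=b=1$, slicing at $x_3=1/2$ (after translating $Q$ so that this becomes the origin in the $x_3$-direction). This yields
\[
\Vol(Q)\le 8\,\Area(Q_{1/2}),\qquad\text{where } Q_{1/2}:=Q\cap\{x_3=1/2\}.
\]

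Next, let $v_1, v_2\in(\Z^2)^*$ be the primitive integer functionals whose bands of width one contain $R$ and $R'$, respectively; by assumption $v_1\neq\pm v_2$. For each $i\in\{1,2\}$, define $\phi_i(t):=\w_{v_i}(Q\cap\{x_3=t\})$, the width of the $x_3$-slice of $Q$ in direction $v_i$. This is a concave function of $t$ on $[-1/2,3/2]$, being the sum of the concave support functions $t\mapsto\max_{Q\cap\{x_3=t\}}v_i$ and $t\mapsto-\min_{Q\cap\{x_3=t\}}v_i$ of slices of the convex body $Q$. We have $\phi_1(0)=\w_{v_1}(R)\le 1$ and $\phi_2(1)=\w_{v_2}(R')\le 1$, and of course $\phi_i\ge 0$. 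Midpoint concavity applied to the triples $(-1/2,0,1/2)$ and $(1/2,1,3/2)$, respectively, yields
\[
\phi_1(1/2)\le 2\phi_1(0)-\phi_1(-1/2)\le 2 \quad\text{and}\quad \phi_2(1/2)\le 2\phi_2(1)-\phi_2(3/2)\le 2.
\]

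Since $v_1$ and $v_2$ are linearly independent primitive integer vectors, $Q_{1/2}$ is contained in a parallelogram whose $v_i$-width is at most $2$ for each $i$, and whose Euclidean area in the ambient $(x_1,x_2)$ coordinates is therefore $4/|\det(v_1,v_2)|\le 4$. Combining, $\Vol(Q)\le 8\cdot 4=32\le 192$, and by Corollary~\ref{coro:PtoQ}, $\Vol(P)\le 16\Vol(Q)\le 3072$. The most delicate step is the concavity bound $\phi_i(1/2)\le 2$: it has to be derived carefully from the integer-height bound $\phi_i(k)\le 1$ (where $k$ is an integer in the domain) together with the non-negativity of $\phi_i$ at the adjacent endpoint of the $x_3$-extent of $Q$, via midpoint concavity of the width function.
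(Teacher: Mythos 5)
Your proof is correct, and it takes a genuinely different route from the paper's. The paper applies Lemma~\ref{lemma:slice} asymmetrically, slicing through $R$ at $x_3=0$ with $(a,b)=(1/2,3/2)$ (cost factor $32$), and then bounds $\Vol(R)\le 6$ by observing that $R$ sits in the width-one band of its own direction and in a width-three band along the direction associated to $R'$. You instead slice symmetrically at the non-lattice height $x_3=1/2$ with $a=b=1$ (cost factor only $8$), and transfer \emph{both} band constraints to that middle slice via concavity of $t\mapsto\w_{v_i}(Q\cap\{x_3=t\})$, getting width at most $2$ in two independent integer directions. The concavity argument is sound (the slice at each endpoint $-1/2$, $3/2$ is a nonempty face, so $\phi_i\ge0$ there), and Lemma~\ref{lemma:slice} applies at a non-lattice height since it is a statement about arbitrary convex bodies. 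Your approach is cleaner and yields a strictly better constant; the only blemish is a normalization mismatch at the end: Lemma~\ref{lemma:slice} relates \emph{normalized} volumes in the two dimensions (as the paper uses it, e.g.\ $\Vol(R)\le 6$ is twice the Euclidean area of a $1\times3$ band intersection), so your Euclidean area bound of $4$ should become a normalized area bound of $8$, giving $\Vol(Q)\le 8\cdot 8=64$ rather than $32$. Since $64\le 192$, the stated conclusion $\Vol(Q)\le192$ and $\Vol(P)\le 3072$ still follows, indeed with room to spare.
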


\begin{proof}
Let us recall our hypotheses: $Q\subset \R^3$ is a $3$-dimensional hollow polytope with supporting hyperplanes $\{x_3=-1/2\}$ and $\{x_3=3/2\}$, and the slices $R:=Q\cap \{x_3=0\}$ and $R'=Q\cap \{x_3=1\}$ both have width one and project to hollow segments, but with respect to different projection directions.

Applying Lemma~\ref{lemma:slice} to $R\subset Q$ with $a=1/2$ and $b=3/2$ we get that
\[
\Vol(Q) \le \frac12 \left(\frac{2}{1/2}\right)^3 \Vol(R) = 32 \Vol(R).
\]

Now, $R$ has width one with respect to a certain direction, and width at most three with respect to a second one. (For the latter, observe that $R$ is contained in a band of width three along the direction of the band of width one containing $R'$). This implies $\Vol(R)\le 6$, from which we deduce $\Vol(Q)\le 192$ and $\Vol(P) \le 192\cdot 16 = 3072$.
\end{proof}

For cases (II) and (III.a) we need to use that the coordinates of vertices of $Q$ are rational with small denominators:

\begin{lemma}
\label{lemma:polygon_coordinates}
In the conditions of cases (II) or (III), all vertices of $R$ and $R'$ have coordinates in $\frac16\Z^2 \cup \frac18\Z^2$.
\end{lemma}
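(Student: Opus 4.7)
The key is to track how denominators arise when we take iterated slices. The plan proceeds in three short steps.

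\textbf{Step 1: Coordinates of vertices of $Q$.} Since $P$ has vertices in $\Z^4$ with $x_4\in\{-1,0,1\}$ (width two, placed in $\R^3\times[-1,1]$), every vertex of the middle slice $Q=P\cap\{x_4=0\}$ is either a vertex of $P$ on $\{x_4=0\}$ or the midpoint of an edge of $P$ going from $x_4=-1$ to $x_4=1$. In either case, its coordinates lie in $\tfrac12\Z^3$.

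\textbf{Step 2: Coordinates of vertices of $R$ and $R'$.} These vertices are intersections of edges of $Q$ with a hyperplane $\{x_3=c\}$, where $c=0$ for $R$ and (in case (III)) $c=1$ for $R'$. If such an edge has endpoints $v_1,v_2\in\tfrac12\Z^3$ with $x_3$-coordinates $a_3<c<b_3$, the intersection point has first two coordinates
\[
\frac{(b_3-c)\,(v_1)_i+(c-a_3)\,(v_2)_i}{b_3-a_3},\qquad i=1,2.
\]
Since $v_1,v_2\in\tfrac12\Z^3$ and $a_3,b_3,c\in\tfrac12\Z$, each numerator lies in $\tfrac14\Z$, while the denominator lies in $\tfrac12\Z_{>0}$.

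\textbf{Step 3: Enumerating the possible denominators.} Here is where the width hypothesis is used. In case (II) we have $x_3\in\{-1,-\tfrac12,0,\tfrac12,1\}$; in case (III) we have $x_3\in\{-\tfrac12,0,\tfrac12,1,\tfrac32\}$. An edge crossing $\{x_3=c\}$ needs $a_3<c$ and $c<b_3$, both half-integers, and in all the subcases of (II)/(III) the total $x_3$-extent of $Q$ is at most $2$. A direct case check then gives
\[
b_3-a_3\in\{1,\ \tfrac32,\ 2\}.
\]
Plugging this back into Step 2:
\begin{itemize}
\item if $b_3-a_3=1$, the vertex lies in $\tfrac14\Z^2\subset\tfrac18\Z^2$;
\item if $b_3-a_3=2$, the vertex lies in $\tfrac18\Z^2$;
\item if $b_3-a_3=\tfrac32$, the vertex lies in $\tfrac16\Z^2$.
\end{itemize}
So every vertex of $R$ (and of $R'$) belongs to $\tfrac16\Z^2\cup\tfrac18\Z^2$, as claimed.

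\textbf{Main obstacle.} There is no serious obstacle; the proof is essentially bookkeeping. The only thing that requires care is verifying in Step 3 that no other values of $b_3-a_3$ can occur, i.e.\ that the width-two hypothesis on $Q$ together with the excluded subcase (III.b) really forces the edge endpoints of $Q$ to have $x_3$-values from the lists above. This is immediate from the placement conventions set up before the lemma, so the whole argument reduces to a short computation.
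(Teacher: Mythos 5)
Your proof is correct and follows essentially the same route as the paper: observe that $Q$ is half-integral because it is the middle slice of a width-two lattice polytope, then enumerate the possible $x_3$-extents of an edge of $Q$ crossing the slicing plane (equivalently, the paper's barycentric coefficient $\lambda\in\{\frac13,\frac12,\frac23\}$ in case (II) and $\{\frac12,\frac23,\frac34\}$ in case (III)) to bound the denominators. The only cosmetic omission is the degenerate case where a vertex of $R$ is itself a vertex of $Q$ (hence already in $\frac12\Z^2$), which the paper notes explicitly but which is trivially covered.
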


\begin{proof}
In case (III) the situations in $R$ and $R'$ are symmetric to one another, so for the rest of the proof we only look at $R=Q\cap\{x_3=0\}$.
Since $Q$ is the middle slice of a lattice polytope $P$ of width two, $Q$ is a half-integer $3$-polytope. That is, the vertices of $Q$ have integer or half-integer coordinates. Let now $p$ be a vertex of $R$. Either $p$ is also a vertex of $Q$ (in which case it has half-integer coordinates) or $p$ is the intersection of an edge $uv$ of $Q$ with the plane $x_3=0$. 
Let $\lambda\in (0,1)$ be the coefficient such that
\[
p= \lambda u + (1-\lambda) v.
\]

Assume without loss of generality that $u$ is in $\{x_3<0\}$ and $v$ in $\{x_3>0\}$.
In case (II) we have that $u$ has its third coordinate in $\{-1,-\frac12\}$ and $v$ in $\{\frac12,1\}$. This implies that $\lambda \in \{\frac13, \frac12, \frac23\}$. 
In case (III) we have that $u$ has its third coordinate equal to $-\frac12$ and $v$ in $\{\frac12,1,\frac32\}$, which implies
 $\lambda \in \{ \frac12, \frac23, \frac34\}$. 
Since  $u, v\in \frac12\Z^2$, in all cases we get  $p \in \frac16\Z^2 \cup \frac18\Z^2$.
\end{proof}

\begin{lemma}
\label{lemma:polygon_volume}
Let $R$ be a hollow polygon with vertices in $\bigcup_{i\le k}\frac{1}{i} \Z^2$ for an integer $k\ge 1$ and such that $R$ does not project to a hollow segment. Then, 
\[
\Vol(R) \le \frac{(k+1)^2}k.
\]
\end{lemma}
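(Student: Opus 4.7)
The plan is to apply Lemma~\ref{lemma:slice} (with $d=2$) to $R$ along an integer lattice line, and to bound the resulting ingredients using the two hypotheses on $R$. Fix any primitive lattice functional $f\colon\R^2\to\R$. Because $R$ does not project to a hollow segment, the image $f(R)=[\alpha,\beta]$ contains some integer $n$ strictly in its interior. The slice $R_n := R\cap f^{-1}(n)$ has its relative interior inside the interior of $R$, so by hollowness no interior point of $R_n$ can be a lattice point of the induced $1$-dimensional lattice on $f^{-1}(n)$; consequently $R_n$ has normalized length at most $1$. Moreover, since the vertices of $R$ lie in $\bigcup_{i\le k}\frac{1}{i}\Z^2$, both $\alpha$ and $\beta$ lie in $\bigcup_{i\le k}\frac{1}{i}\Z$, so $a:=n-\alpha\ge 1/k$ and $b:=\beta-n\ge 1/k$. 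Lemma~\ref{lemma:slice} then yields the master estimate
\[
\Vol(R) \;\le\; \frac{(a+b)^2}{\min(a,b)}\,\Vol(R_n) \;\le\; \frac{(a+b)^2}{\min(a,b)}.
\]

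A short calculation shows the right-hand side equals $(k+1)^2/k$ precisely when $\{a,b\}=\{1/k,1\}$. This extremal case is realized by the triangle $\conv\bigl\{(0,-1/k),(0,1),(k+1,1)\bigr\}$, which one verifies is hollow, has all vertices in $\bigcup_{i\le k}\frac{1}{i}\Z^2$, does not project to a hollow segment, and has normalized volume exactly $(k+1)^2/k$. To finish the proof it thus suffices to exhibit, for every admissible $R$, a primitive $f$ and integer $n$ for which $(a+b)^2/\min(a,b)\le (k+1)^2/k$.

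The main obstacle is producing such a favorable direction uniformly in $k$. I would split into cases according to the lattice width of $R$. When some lattice direction realizes width at most $1+1/k$, the unique integer $n$ in the interior of that projection automatically lands in the regime $\min(a,b)=1/k$ with $a+b\le 1+1/k$, and the master estimate gives exactly the desired bound. Otherwise, combining the $2$-dimensional flatness theorem (which bounds the lattice width of any hollow planar convex body by $1+2/\sqrt{3}$) with the denominator structure of the vertices should produce a direction in which $\alpha$ is an integer, forcing $\min(a,b)\ge 1$ and yielding $(a+b)^2/\min(a,b)\le 4\le (k+1)^2/k$. The combinatorial core of the argument is to verify that these two regimes indeed cover all admissible $R$ uniformly.
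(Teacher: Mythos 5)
Your first half is sound and coincides with how the paper treats the narrow case: applying Lemma~\ref{lemma:slice} with $d=2$ along a direction in which the projection $[\alpha,\beta]$ contains an integer $n$ in its interior gives $\Vol(R)\le (a+b)^2/\min(a,b)$, the endpoints lie in $\bigcup_{i\le k}\frac1i\Z$ so $a,b\ge 1/k$, and the triangle $\conv\{(0,-1/k),(0,1),(k+1,1)\}$ is indeed the extremal configuration. Your Case~1 also works as stated (with $\min(a,b)\ge 1/k$ rather than $=1/k$): if some direction has $a+b\le 1+1/k$ then $(a+b)^2/\min(a,b)\le k\,(1+1/k)^2=(k+1)^2/k$.

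The genuine gap is Case~2, i.e.\ when the lattice width $w$ of $R$ exceeds $1+1/k$ in every direction. First, you give no reason why a direction with an integer endpoint should exist, and I see none. Second, even granting such a direction, the conclusion fails: an integer endpoint forces only $a\ge 1$, not $\min(a,b)\ge 1$, and when $w>2$ (which can happen for non-integral hollow polygons, up to Hurkens's bound $1+2/\sqrt3$) one has $(a+b)^2/\min(a,b)>4$ even with $\min(a,b)=1$. Third, and most fundamentally, in the regime $w\in(1+1/k,\,2]$ the one-dimensional slicing bound can only give $(a+b)^2/\min(a,b)\le k\,w^2\le 4k$, which vastly exceeds $(k+1)^2/k$ for large $k$, and no choice of direction repairs this because $\min(a,b)$ has no lower bound better than $1/k$. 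The paper closes exactly this regime with a genuinely two-dimensional input, the Averkov--Wagner inequality $\Vol(R)\le w^2/(w-1)$ for hollow planar bodies of width $w\in(1,2]$ (and $\Vol(R)\le 4$ for $w\ge 2$), combined with the observation that the denominator hypothesis forces $w\ge (k+1)/k$ and that $w^2/(w-1)$ is decreasing on $(1,2]$. That planar inequality exploits hollowness of $R$ in all of $\R^2$ simultaneously and cannot be recovered from Lemma~\ref{lemma:slice} applied to a single slice; without it (or a substitute of comparable strength) your argument does not cover all admissible $R$.
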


\begin{proof}
Averkov and Wagner~\cite[Theorem 2.2]{AverkovWagner} have given upper bounds for the maximum area of a hollow polygon in terms of its width $w$, depending on whether $w$ lies in $[0,1]$, $[1,2]$ or $[2,1+2/\sqrt{3}]$. (That $1+2/\sqrt{3}\sim 2.15$ is the maximum possible width was previously shown by Hurkens~\cite{Hurkens}). We prove the statement separately in the three cases:

\begin{itemize}
\item If $w\in (0,1]$ then $R$ is contained in a strip of width one, say $R\subset \R\times [\alpha -1, \alpha]$, with $\alpha\in(0,1)$ ($\alpha$ cannot be an integer, because $R$ does not project to a hollow segment). We can assume without loss of generality that $\alpha\le 1/2$ and then, since $R\cap \{x_2=0\}$ has length at most $1$, Lemma~\ref{lemma:slice} implies
\[
\Vol(R) \le \alpha\left(\frac{1}{\alpha}\right)^2 = \frac{1}{\alpha}\le k,
\]
where the last inequality comes from the fact that $\alpha\in \bigcup_{i\le k}\frac{1}{i} \Z^2$.

\item If $w\in (1,2]$ then the bound from~\cite[Theorem 2.2]{AverkovWagner} is
\begin{align}
\Vol(R)\le 
   \frac{w^2}{w-1}.
    \end{align}
(Observe we have multiplied the formula in~\cite{AverkovWagner} by two, since our volume is normalized to the unimodular triangle and theirs is not).
Since $w>1$ must be in $\bigcup_{i\le k}\frac{1}{i} \Z^2$, we have $w\ge (k+1)/k$.
Since the function $\frac{w^2}{w-1}$ is decreasing for $w\le 2$, we get
\[
\Vol(R)\leq\frac{w^2}{w-1} \leq \frac{(k+1)^2/k^2}{1/k} = \frac{(k+1)^2}{k}.
\]

\item If $w\in [2, 1+2/\sqrt{3}]$ then the bound in~\cite{AverkovWagner} implies $\Vol(R) \le 4$. On the other hand $k\ge 2$ (no hollow lattice polygon has width larger than two), so indeed 
\[
\Vol(R) \le 4 \le \frac{(k+1)^2}{k}.
\]
\end{itemize}
\end{proof}

We can now address cases (II) and (III.a) together:

\begin{corollary}
\label{coro:II+IIIa}
If $Q$ is in one of cases (II) or (III.a) then $\Vol(Q)\le 324$, hence $\Vol(P) \le 5184$.
\end{corollary}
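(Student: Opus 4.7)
The plan is to deduce the bound $\Vol(Q) \le 324$ by sandwiching $Q$ around its planar slice $R := Q \cap \{x_3 = 0\}$: first bound $\Vol(Q)$ in terms of $\Vol(R)$ via Lemma \ref{lemma:slice}, and then bound $\Vol(R)$ using Lemmas \ref{lemma:polygon_coordinates} and \ref{lemma:polygon_volume}. Once $\Vol(Q) \le 324$ is established, the bound $\Vol(P) \le 5184$ is immediate from Corollary \ref{coro:PtoQ}.

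The first step is to check that $R$ satisfies the hypotheses of Lemma \ref{lemma:polygon_volume}, namely that it is hollow and does not project to a hollow segment. Hollowness is inherited from $Q$, which is itself hollow as the middle slice of the empty simplex $P$. The non-projection property is the defining assumption in case (III.a). In case (II) I would argue by contradiction: if $\sigma_0 : \R^2 \to \R$ were a hollow projection of $R$, then the composed lattice projection $\sigma := (\sigma_0, \mathrm{id}) : \R^3 \to \R^2$ would send $Q$ into a polygon contained in $\R \times [-1, 1]$ whose middle slice $\sigma_0(R)$ is a unit integer segment. Any interior lattice point $(n, m)$ of $\sigma(Q)$ would have $m \in \{-1, 0, 1\}$; at $m = \pm 1$ it lies on a supporting line, and at $m = 0$ it would have to be one of the two endpoints of $\sigma_0(R)$, which are boundary points of $\sigma(Q)$. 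So $\sigma(Q)$ would be a hollow $2$-projection of $Q$, contradicting the standing hypothesis on $Q$.

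The second step applies Lemma \ref{lemma:slice} with $d = 3$ and $K_0 = R$. The worst ratio $\Vol(Q)/\Vol(R)$ arises in case (III.a), where the supports at $x_3 = -1/2$ and $x_3 = 3/2$ yield $a = 1/2$, $b = 3/2$ and $\Vol(Q) \le \tfrac{1}{2} \cdot 4^3 \cdot \Vol(R) = 32\, \Vol(R)$; in case (II) one checks that the possible widths give the strictly smaller factors $8$ (when $\mathrm{width}(Q)=2$ with integer support) or $27/2$ (when $\mathrm{width}(Q)=3/2$). The third step invokes Lemma \ref{lemma:polygon_coordinates} to place the vertices of $R$ in $\tfrac{1}{6}\Z^2 \cup \tfrac{1}{8}\Z^2 \subset \bigcup_{i \le 8} \tfrac{1}{i}\Z^2$, and then Lemma \ref{lemma:polygon_volume} with $k = 8$ gives $\Vol(R) \le 9^2/8 = 81/8$. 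Multiplying the largest slicing ratio by this polygon bound yields $\Vol(Q) \le 32 \cdot 81/8 = 324$, and Corollary \ref{coro:PtoQ} produces $\Vol(P) \le 16 \cdot 324 = 5184$.

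The main obstacle is the verification that $R$ does not project to a hollow segment in case (II), since this property is not built into the definition of the case but is required to apply Lemma \ref{lemma:polygon_volume}. The lift-and-slice argument in the second paragraph is the key point, and it works precisely because in case (II) the container $\R^2 \times [-1,1]$ has integer supports in the $x_3$-direction, confining the potential interior lattice points of the lifted projection to three integer horizontal lines on which each candidate lattice point can be shown to lie on the boundary.
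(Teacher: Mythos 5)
Your proof is correct and follows essentially the same route as the paper: Lemma~\ref{lemma:polygon_coordinates} and Lemma~\ref{lemma:polygon_volume} with $k=8$ give $\Vol(R)\le 81/8$, Lemma~\ref{lemma:slice} gives the worst-case factor $32$ over the possible pairs $(a,b)$, and Corollary~\ref{coro:PtoQ} finishes. The only difference is that you spell out why $R$ does not project to a hollow segment in case (II), a point the paper asserts without proof; your lift-and-slice argument for it is valid.
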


\begin{proof}
By Lemma~\ref{lemma:polygon_volume} we have $\Vol(R) \le 81/8$. Moreover, we can apply Lemma~\ref{lemma:slice} to $Q$ and its slice $R$, with $(a,b)\in \{(1/2,1/2), (1/2, 1), (1/2, 3/2), (1,1)\}$. These four cases give, respectively,
\[
a \left(\frac{a+b}a\right)^{3} \in
\left\{4,  \frac{27}2 , 32  ,  8\right\}.
\]
Hence,
\[
\Vol(Q) \le 32 \frac{81}8 = 324, 
\quad
\Vol(P) \le 16\Vol(Q) \le  5184.
\]
\end{proof}

\section{Facet volumes, $h^*$-vectors and Ehrhart polynomials}
\label{sec:facets}

As a tool to study the restrictions on the parameters $\alpha$, $\beta$ and $V$ in Theorem~\ref{thm:main}, in Sections~\ref{sec:k<=2} and \ref{sec:k=3} we have studied the possible facet volumes of empty $4$-simplices in the infinite families. We here complete that information including a summary of the data for sporadic families, and relate it to  $h^*$-vectors and Ehrhart polynomials.

Recall that the \emph{Ehrhart polynomial} of a lattice $d$-polytope $P$ is a degree $d$-polynomial 
$
E(P,t) = E_d\, t^d + \dots + E_0 \in  \Q[t]
$ 
with the property that
\[
E(P,t) = |tP \cap \Lambda|, \quad \forall t\in \N.
\]
Some well-known facts about it are that $E_d= \Vol(P)/d!$, $E_{d-1}=\Surf(P)/2(d-1)!$, where $\Vol$ and $\Surf$ denote the normalized volume and surface area (the sum of normalized volumes of facets). Also, Ehrhart reciprocity states that
\[
E(P,-t) = (-1)^{d} \, |\text{interior}(tP) \cap \Lambda|, \quad \forall t\in \N.
\]

An alternative way of giving the same information is via the $h^*$-vector (or $\delta$-vector) of $P$, a vector $h^*(P)=(h^*_0,\dots, h^*_d) \in \N^{d+1}$ with the property that
\[
\sum_{n=0}^\infty E(P, n) x^n = \frac{h^*_d x^d + \dots + h^*_0}{(1+x)^{d+1}}.
\]
That is, the $h^*$-vector gives (the vector of coefficients of the numerator of the rational function of) the generating function of the sequence $(E(P,n))_{n\in \N}$. See \cite{BeckRobins,Ehrhart,Stanley} for more information on Ehrhart polynomials and $h^*$-vectors, and~\cite{Scott, Stapledon,EhrhartSpanning,BallettiHigashitani, LS19, HNO18} for results on classifying them.

For empty $4$-simplices, the $h^*$-vector admits the following simple expression in terms of volume and surface area:

\begin{proposition}
\label{prop:hstar}
Let $P$ be an empty $4$-simplex of volume $V$ and facet volumes $(V_0,V_1,V_2,V_3,V_4)$. Let $S=V_0+\dots+V_4$ be the surface area of $P$. Then, the $h^*$-vector of $P$  is
\[
h^*_0=1, \quad
h^*_1=0, \quad
h^*_2= \frac{V+S}2-3, \quad
h^*_3=\frac{V-S}2+2, \quad
h^*_4=0.
\]
\end{proposition}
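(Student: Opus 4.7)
The plan is to determine the five entries of $h^*(P)=(h^*_0,h^*_1,h^*_2,h^*_3,h^*_4)$ by pinning down three of them directly from the emptiness hypothesis and then solving a $2\times 2$ linear system whose two equations come, respectively, from the leading and subleading coefficients of the Ehrhart polynomial $E(P,t)$.

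First, I would record the three values that do not require any computation. Always $h^*_0=1$. The general identity $h^*_1=|P\cap\Lambda|-(d+1)$ gives $h^*_1=5-5=0$, since an empty $4$-simplex has exactly its five vertices as lattice points. And since an empty simplex is hollow, $h^*_4=|\operatorname{interior}(P)\cap\Lambda|=0$ by Ehrhart reciprocity (or, equivalently, by the direct interpretation of $h^*_d$).

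Next I would use two global numerical invariants. The equality $\sum_{i=0}^{4} h^*_i=V$ (the normalized volume of $P$) reduces to
\[
h^*_2+h^*_3=V-1.
\]
For the second equation I will use the identity
\[
E(P,t)=\sum_{i=0}^{4} h^*_i\binom{t+4-i}{4},
\]
and compare the coefficient of $t^3$ on both sides with the known value $E_3=\Surf(P)/(2\cdot 3!)=S/12$. Expanding each binomial $\binom{t+4-i}{4}$ as a polynomial in $t$, the coefficients of $t^3$ are $\tfrac{1}{24}\cdot(10,6,2,-2,-6)$ for $i=0,1,2,3,4$ respectively. Substituting $h^*_0=1$, $h^*_1=h^*_4=0$ and multiplying by $24$ yields
\[
10+2h^*_2-2h^*_3=2S,\qquad\text{i.e.}\qquad h^*_2-h^*_3=S-5.
\]

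Finally, solving the linear system $h^*_2+h^*_3=V-1$, $h^*_2-h^*_3=S-5$ produces $h^*_2=(V+S)/2-3$ and $h^*_3=(V-S)/2+2$, as claimed. There is essentially no obstacle here: the only point to be careful about is the arithmetic of the $t^3$-coefficients of the shifted binomials, which is a straightforward expansion; everything else is a direct use of standard Ehrhart theory combined with the two defining features of empty simplices, namely having no non-vertex boundary lattice points and no interior lattice points.
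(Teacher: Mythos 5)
Your proof is correct and uses essentially the same ingredients as the paper's: the two known Ehrhart coefficients $E_4=V/24$ and $E_3=S/12$ together with the lattice-point counts forced by emptiness ($|P\cap\Lambda|=5$ and no interior points). The paper first interpolates the full Ehrhart polynomial from these data and then converts to the $h^*$-vector, whereas you solve directly for $h^*_2,h^*_3$ from the identities $\sum_i h^*_i=V$ and the $t^3$-coefficient comparison; this is only a difference in bookkeeping, and your computed coefficients $\tfrac{1}{24}(10,6,2,-2,-6)$ are correct.
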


\begin{proof}
From the two coefficients $E_4=V/24$ and  $E_3=S/12$ and the three values $E(P,-1)=0$, $E(P,0)=1$, $E(P,1)=5$ we can recover the whole Ehrhart polynomial, which turns out to be
\[
E_P(n) = \frac{V}{24}n^4 + \frac{S}{12}n^3+\left(\frac32-\frac{V}{24}\right)n^2 + \left(\frac52-\frac{S}{12}\right)n +1.
\]
From this, routine computations give the $h^*$-vector.
\end{proof}

\begin{remark}
The values $h^*_0=1$, $h^*_1=0$, and $h^*_d=0$ hold for empty simplices in arbitrary dimensions, by the following general formulas for arbitrary lattice polytopes~\cite[Section 3.4]{BeckRobins}:
\[
		 h^{*}_0=1, \qquad
		 h^{*}_1=|P\cap \Z^d|-(d+1), \qquad
		 h^{*}_d=|\text{interior}(P)\cap \Z^d|.
\]
Another general formula is  $\sum_{i=0}^{d}h^*_i = \Vol(P)$~\cite[Cor.~3.21]{BeckRobins}, which in the case of empty simplices
directly gives
\[
h^{*}_2+\dots+h^{*}_{d-1}=V-1.
\]
Observe also that Proposition~\ref{prop:hstar} agrees with the \emph{Hibi inequality} $h^*_2\geq h^*_3$~\cite{Hibi}.
\end{remark}

Proposition~\ref{prop:hstar} implies that the Ehrhart polynomial and $h^*$-vector of an empty $4$-simplex is determined by $h^*_2$ and $h^*_3$ or, equivalently, by 
\begin{gather}
\label{eq:h_to_volumes}
h^{*}_2+h^{*}_3 = V-1 \quad
\text{and }\quad
h^{*}_2-h^{*}_3 = S-5.
\end{gather}
These two quantities are nonnegative and measure how far is $P$ from being unimodular or from having unimodular facets. We call them the \emph{volume excess} and the \emph{surface area excess} of $P$.

\begin{figure}
	\centerline{\includegraphics[scale=0.65]{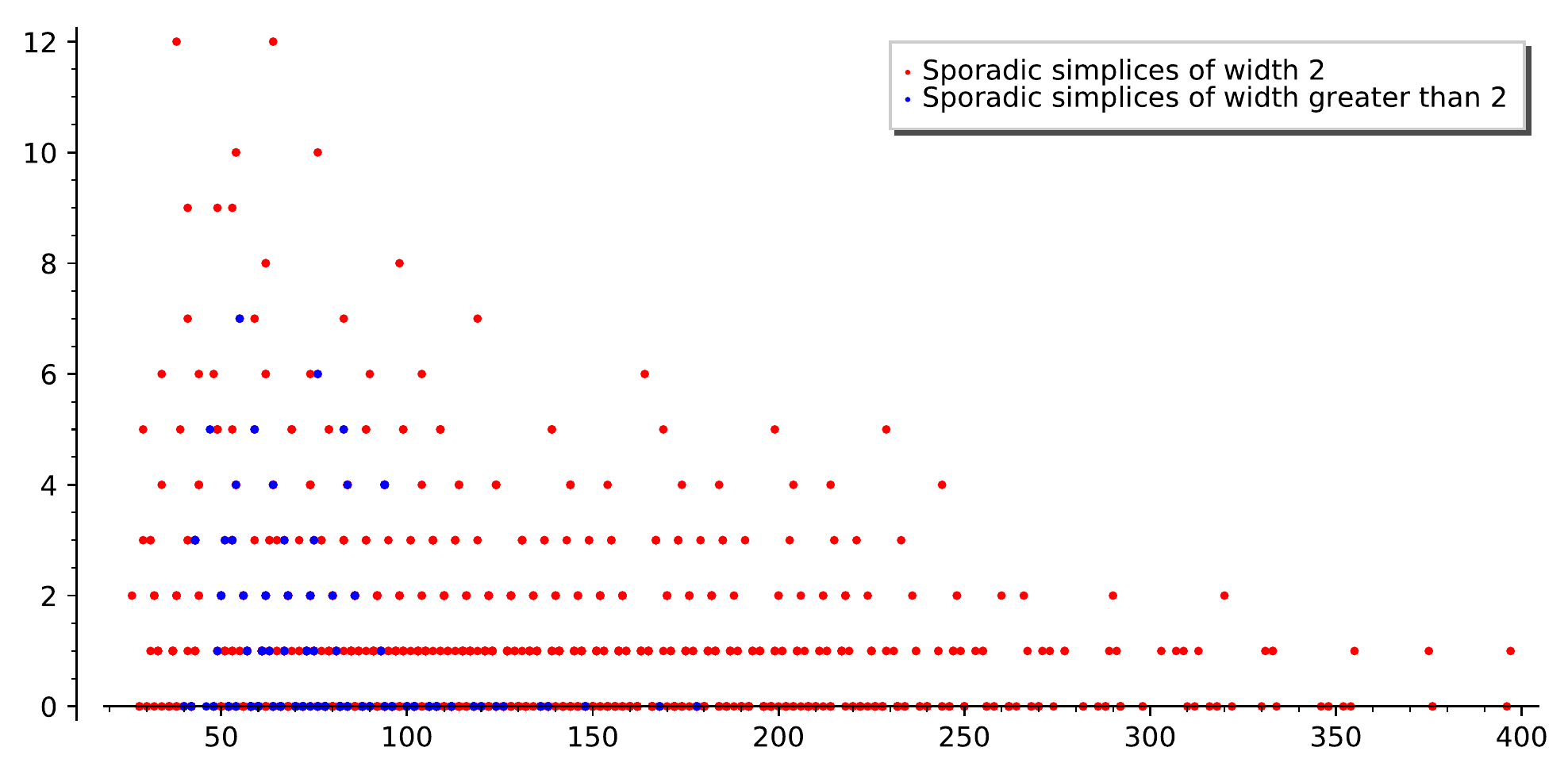}}
	\caption{Possible values of $V-1=h^*_2+h^*_3$ (horizontal axis) and $S-5=h^*_2-h^*_3$ (vertical axis) for the 2461 sporadic empty $4$-simplices}
	\label{fig:sporadic}
\end{figure}

In Figure~\ref{fig:sporadic} we show the statistics of volume and surface area excess for the 2461 sporadic simplices. 
As seen in the figure, the maximum value of the latter is 12. It is achieved exactly twice, for the simplices of volumes 39 and 65 defined by the $5$-tuples $(5,8,13,14,38)$ and $(3,14,23,26,64)$ respectively. They both have width two and a single nonunimodular facet, of volume $13$ in both. 
\footnote{Here and in the rest of the section we mention width of the different examples since this was a crucial invariant for the the bound in Section~\ref{sec:k=4} and in \cite{HZ00,IglesiasSantos}. Observe that $k=1$ is equivalent to width one, $k\in \{2,3\}$ implies width two, and the sporadic simplices can have width between two and four}

\begin{table}[htp]
\hrule
	\begin{tabular}{c|c|c}
		case 
		&  \parbox{3cm}{\centering possible \\ $h^*_2-h^*_3$}
		& \parbox{3cm}{\centering possible $\#$ of non-\\unimodular facets\medskip}\\
		\hline
		$k=1$ & unbounded &$0,1,2,3$ \\
		\hline
		$k=2$, primitive & unbounded& $0,1$\\
		$k=2$, index $2$  & $1$&  $1$\\
		\hline
		$k=3$, primitive & $0,\dots, 7$
		&  $3$ \\
		$k=3$, index $2$& $1, 3$& $2$ \\
		$k=3$, index $3$& $0, 2, 3$& $0,1,2$\\
		$k=3$, index $4$& $1$& $1$\\
		$k=3$, index $6$& $3$& $2$\\
		\hline
		sporadic  & $0,\dots, 12$, except $11$
		&$0,1,2,3$\\
	\end{tabular}
	\vspace{0.2cm}
	\caption{Possible number and volume excess  of nonunimodular facets for empty $4$-simplices depending on the case}
	\label{table:h_and_facets}
\hrule\end{table}
These computations, together with Lemmas~\ref{lemma:volumes1and2}, \ref{lem:volume_k=3_primitive} and Corollary~\ref{coro:volume_k=3_primitive}, give the possibilities for surface area excess and number of nonunimodular facets summarized in  Table~\ref{table:h_and_facets}.

It was stated in the MSc Thesis~\cite{Wessels} (in German) that all empty $4$-simplices have at least two unimodular facets,
but we are not sure that the proof contained there is complete. Our classification provides an alternative proof:

\begin{corollary}
\label{coro:2unimodular}
Every empty $4$-simplex has \emph{at least} two unimodular facets. The ones that have \emph{only} two unimodular facets are:
\begin{itemize}
 \item The simplices with $k=1$ (equivalently, of width $1$) when their $5$-tuple $(\alpha+\beta,-\alpha,-\beta,-1,1)$ has the property that $V$ has prime factors in common with the three of $\alpha$, $\beta$ and $\alpha + \beta$ (such factors are automatically distinct, since $\gcd(\alpha, \beta, V) =1$).
 \item The simplices with $k=3$ (hence, of width two) in the primitive family with $5$-tuple $(7,5,3,-1,-14)$, whenever $V$ is a multiple of $30$.
 \item The following $3$ sporadic empty  $4$-simplices of width two:
\rm
\[
	\begin{array}{c|c|c|c}
		\text{$5$-tuple} & \text{ volume} & \text{facet volumes} & \text{$h^*$-vector} \\
		\hline
		(4,7,15,17,41) & 42 & (2,7,3,1,1) & (1,0,25,16,0)\\
		\hline
		(2,13,21,25,59) & 60  & (2,1,3,5,1) & (1,0,33,26,0)\\
		\hline
		(2,13,25,81,119) & 120 & (2,1,5,3,1) & (1,0,63,56,0)\\
	\end{array}
\]
 \end{itemize}
\end{corollary}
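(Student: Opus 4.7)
The plan is to proceed case by case according to the minimum projection dimension $k\in\{1,2,3,4\}$ of Theorem~\ref{thm:main}, using the facet-volume data already assembled in earlier sections together with a direct inspection of the 2461 sporadic simplices.

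For $k=1$, Lemma~\ref{lemma:volumes1and2}(1) gives facet volumes $(\gcd(V,\alpha+\beta),\gcd(V,\alpha),\gcd(V,\beta),1,1)$, so the last two facets are always unimodular and there are exactly two unimodular facets precisely when all three of $\gcd(V,\alpha+\beta)$, $\gcd(V,\alpha)$, $\gcd(V,\beta)$ exceed $1$. Since $\gcd(\alpha,\beta,V)=1$, any prime dividing two of these three gcds would divide all three and contradict the coprimality condition, so such primes must be pairwise distinct. For $k=2$, Lemma~\ref{lemma:volumes1and2}(2) says four of the five facets are always unimodular, so nothing new arises.

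For $k=3$, I would split into primitive and non-primitive. In the primitive case, Lemma~\ref{lem:volume_k=3_primitive} together with Table~\ref{table:volume_k=3_primitive} bounds the number of nonunimodular facets by three, and only the $5$-tuple $(7,5,3,-1,-14)$ attains three; by inspection of that row, exactly three nonunimodular facets (of volumes $5$, $3$, $2$) occur precisely when $V$ is a multiple of $30$ (and automatically coprime with $7$ by the emptiness restriction). In the non-primitive case, a direct scan of the last column of Table~\ref{table:volume_k=3_nonprimitive} shows that no entry has more than two nonunimodular facets, so there are always at least three unimodular ones and nothing new to record.

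For $k=4$ (the sporadic simplices), I would simply filter the enumerated list underlying Theorem~\ref{thm:sporadic}, computing $\gcd(V,b_i)$ for the five entries of each $5$-tuple and retaining those with at most two unimodular facets. This computation yields exactly the three simplices listed (of volumes $42$, $60$, $120$), and for each of them one verifies that the $h^*$-vector stated matches the formula of Proposition~\ref{prop:hstar}. Combining the four cases proves both the general lower bound of two unimodular facets and the exhaustive list of empty $4$-simplices achieving equality. The main obstacle is not conceptual but bookkeeping: one must carefully read off the facet-volume patterns from Tables~\ref{table:volume_k=3_primitive} and \ref{table:volume_k=3_nonprimitive} to rule out any further infinite family with only two unimodular facets, since a missed line in either table would produce a spurious infinite family in the corollary.
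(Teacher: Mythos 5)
Your proposal is correct and follows essentially the same route as the paper, which derives the corollary from the classification of Theorem~\ref{thm:main} together with the facet-volume data of Lemmas~\ref{lemma:volumes1and2} and~\ref{lem:volume_k=3_primitive}, Corollary~\ref{coro:volume_k=3_primitive} (Tables~\ref{table:volume_k=3_primitive} and~\ref{table:volume_k=3_nonprimitive}), and a computational filter of the sporadic list. The case analysis, including the observation that only $(7,5,3,-1,-14)$ with $30\mid V$ attains three nonunimodular facets among the infinite families, matches the paper's argument.
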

In higher dimension it is no longer true that all empty simplices  have some facet unimodular: there is an empty $5$-simplex of volume $54$ whose facet volumes are $(6,6,9,54,54,54)$~\cite[p.~21]{Wessels}; 
see also \cite[Remark 1]{BBBK11} for a $3$-parameter infinite family of noncyclic empty $5$-simplices projecting to $2\Delta_2$ and with all facets of the same, arbitrarily large, volume.

\section{Towards a classification of hollow $4$-simplices}
\label{sec:hollow2}

Some of the ingredients in the proof of our classification work not only for empty $4$-simplices, but for all hollow ones.  Putting them together we have the following not-so-explicit classification of hollow $4$-simplices.

\begin{theorem}[Classification of hollow $4$-simplices]
\label{thm:mainhollow}
Let $P$ be a hollow $4$-simplex of  volume $V\in \N$ and let $k\in \{1,2,3,4\}$ be the minimum dimension of a hollow polytope that $P$ projects to. Then $P$ belongs to one of the following fine families:

\begin{itemize}

\item[\framebox{$k=1$:}] 
Two fine families projecting to the multisets $\{0,0,0,0,1\}$ and $\{0,0,0,1,1\}$. The cyclic members of these families are parametrized by 
 $5$-tuples of the form $ (\alpha+\beta+\gamma,-\alpha,-\beta,-\gamma,0)$ and  $ (\alpha+\beta,-\alpha,-\beta,-\gamma,\gamma)$, respectively, where $\alpha, \beta, \gamma \in \Z_V$ are arbitrary.

\item[\framebox{$k=2$:}] 
Six fine families projecting to the two multisubsets of $2\Delta_2 \cap \Z^2$ displayed in Figure~\ref{fig:2Delta-cases} or to the following four additional ones:
\medskip
\\
{
\null\hskip -.8cm
\includegraphics[scale=0.7]{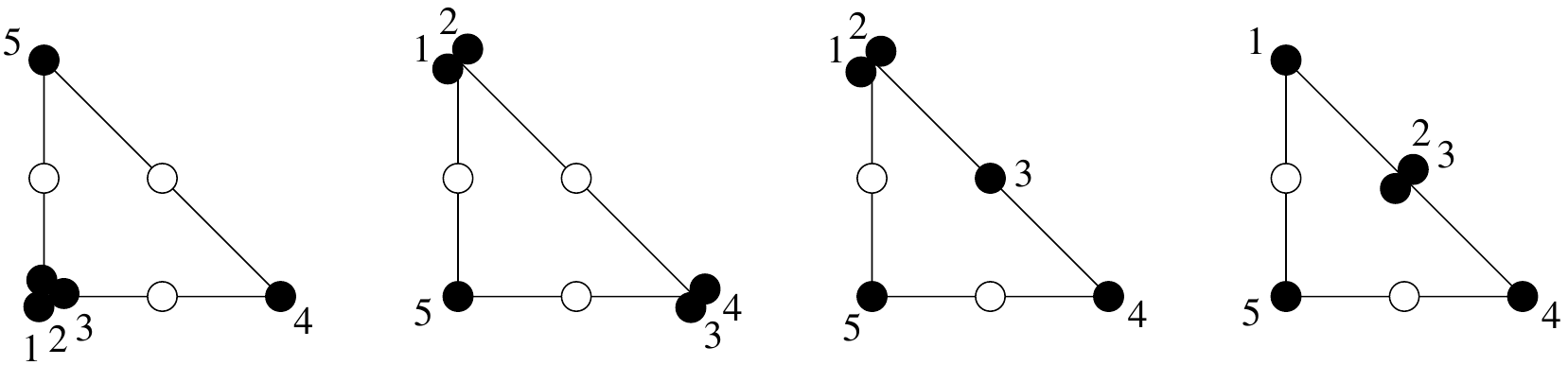}
}
\medskip\\

Cyclic members of the families can be parametrized, respectively, by the following $5$-tuples, where $\alpha, \beta \in \Z_V$ are arbitrary:
\begin{align*}
&\  \,(\beta,-2\beta,\alpha,-2\alpha,\beta+\alpha), \\
\frac{V}2\left(0, 1 ,0, 1, 0\right)\ +&\  (\beta,\beta,\alpha,-\alpha, -2\beta),\\
\frac{V}2\left(0, 0 ,0, 1, 1\right)\ +&\  (\alpha+\beta,-\alpha,-\beta,0,0),\\
\frac{V}2\left(0, 0 ,0, 1, 1\right)\ +&\  (\alpha, -\alpha, \beta,-\beta,0),\\
\frac{V}2\left(0, 0 ,0, 1, 1\right)\ +&\  (\alpha+\beta,-\alpha,-2\beta,\beta,0),\\
\frac{V}2\left(0, 0 ,0, 1, 1\right)\ +&\  (\beta, \alpha-2\beta,-\alpha,\beta,0).
\end{align*}
\item[\framebox{$k=3$:}] $P$ belongs to a finite set of fine families, one corresponding to each tetrahedron, square pyramid, or triangular bipyramid of Lemma~\ref{lemma:dim3enum}. 

\item [\framebox{$k=4$:}] There are finitely many possibilities for $P$, by Theorem~\ref{thm:NillZiegler}. Their volumes are bounded by $5184$.
\end{itemize}
\end{theorem}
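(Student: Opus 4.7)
The plan is to apply the fine classification scheme of Section~\ref{sec:hollow} case by case. For each $k\in\{1,2,3,4\}$, I would first enumerate the size-$5$ hollow configurations $S\subset\R^k$ that do not project to a hollow $(k-1)$-polytope (each of these defines a fine family by Corollary~\ref{coro:fine}), and then, for each such $S$, invoke Corollary~\ref{coro:parameters} to parametrize the cyclic members of the family as $Va+b$, where $a\in \frac{1}{I}\Z^5$ represents a generator of a cyclic subgroup of $\pi(\Lambda)/\Lambda_S$ in barycentric coordinates and $b$ runs over the integer affine dependences of $S$.

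For $k=3$ the enumeration is already available as Lemma~\ref{lemma:dim3enum}. For $k=4$ finiteness is Theorem~\ref{thm:NillZiegler}, and the explicit bound $\Vol(P)\le 5184$ is Theorem~\ref{thm:bound}, whose proof in Section~\ref{sec:k=4} is written for all hollow (not just empty) $4$-simplices of width two; combined with the bound in \cite[Thm.~3.6]{IglesiasSantos} for width at least three, this settles the $k=4$ case. So only cases $k=1,2$ require genuinely new enumeration. For $k=1$ the unique hollow $1$-polytope is the unit segment, and up to its reflection symmetry the only size-$5$ multisubsets of $\{0,1\}$ are $\{0,0,0,0,1\}$ and $\{0,0,0,1,1\}$; both are primitive, and direct computation of their affine-dependence spaces gives the two stated $3$-parameter tuples. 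For $k=2$ the unique hollow $2$-polytope not projecting to a segment is $2\Delta_2$ (Example~\ref{exm:coarse2dim}), and since every proper sub-polytope of $2\Delta_2$ has width one, $S$ must contain all three vertices of $2\Delta_2$; the remaining two elements of $S$ are chosen with repetition among the six lattice points of $2\Delta_2$, producing, modulo the $S_3$-symmetry of $2\Delta_2$, exactly six unordered multisets: the two from Figure~\ref{fig:2Delta-cases} (handled in Section~\ref{sec:k<=2}) and four new ones in which some edge of $2\Delta_2$ contains four or more of the five points. For each of these four new configurations I would compute $\Lambda_S$, the index $I:=[\Z^2:\Lambda_S]$, a barycentric representative of a generator of a cyclic subgroup of the quotient, and a basis of the integer affine dependences of $S$, and read off the stated tuple.

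The main obstacle is that, for two of the four new configurations, $\Z^2/\Lambda_S$ is \emph{not} cyclic. This happens precisely when the three vertices of $2\Delta_2$ are the only distinct points of $S$: then $\Lambda_S = 2\Z^2$ and the quotient is $\Z_2\times\Z_2$. A cyclic $4$-simplex cannot surject onto such a quotient: it can project through a cyclic subgroup of order two only, so Corollary~\ref{coro:parameters} must be applied to each such subgroup separately, and it must be verified that in an ordering of $S$ in which the two non-doubled vertices occupy positions $s_3,s_4$, the point $(s_3+s_4)/2$ lies outside $\Lambda_S$ and generates one such subgroup; this is what produces the $\frac{V}{2}(0,0,0,1,1)$ summand common to the four new tuples. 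The remaining two new configurations have $I=2$ and cyclic quotient, so Corollary~\ref{coro:parameters} applies directly and only requires choosing the same uniform labeling of $S$.
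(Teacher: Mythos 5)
Your proposal follows the same route as the paper's proof: enumerate the admissible configurations $S$ for $k=1,2$, quote Lemma~\ref{lemma:dim3enum} for $k=3$, and observe that Theorem~\ref{thm:bound} (together with the width-$\ge 3$ bound from \cite{IglesiasSantos}) applies to all hollow, not just empty, $4$-simplices for $k=4$; your count of the six $k=2$ configurations and their characterization via an edge of $2\Delta_2$ containing four of the five points also matches. The one place where you go beyond the paper is the observation that for the two configurations whose only distinct points are the three vertices of $2\Delta_2$ one has $\Lambda_S=2\Z^2$ and $\pi(\Lambda)/\Lambda_S\cong\Z_2\times\Z_2$, so that a cyclic $P$ can only project to $S$ through an intermediate lattice with cyclic (order $\le 2$) quotient over $\Lambda_S$. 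This is a genuine subtlety: the paper's proof disposes of it with the parenthetical remark that ``in all nonprimitive cases the index is two,'' which as stated does not apply to these two configurations, so your extra care here is warranted rather than redundant.

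However, your resolution leaves a loose end. Once you decide to apply Corollary~\ref{coro:parameters} ``to each such subgroup separately,'' you obtain, for each of the three order-two subgroups of $\Z_2\times\Z_2$ (plus the trivial one, corresponding to $\pi(\Lambda)=\Lambda_S$), a family of $5$-tuples $Va+b$, and the theorem lists only the choice $a=\frac12(0,0,0,1,1)$. To finish you must either show the other choices are subsumed elsewhere or add them to the statement. In fact they are subsumed: for instance, for $S=\{v_1,v_1,v_1,v_2,v_3\}$ the choice $a=\frac12(1,0,0,1,0)$ gives the tuple $\bigl(\frac{V}2+\alpha+\beta,-\alpha,-\beta,\frac{V}2,0\bigr)$, which has one entry $\equiv 0$ and the remaining four summing to $0$ modulo $V$, i.e.\ it already lies in the first $k=1$ family $(\alpha+\beta+\gamma,-\alpha,-\beta,-\gamma,0)$ with $\gamma=V/2$; the analogous verification works for the other omitted generators (and is consistent with the theorem's explicit disclaimer that the classification is redundant, since such simplices also have width one). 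Also, a small correction to your description: for the configuration $\{v_1,v_1,v_2,v_2,v_3\}$ there is only one non-doubled vertex, so the generator producing the listed summand is the midpoint of an edge joining a doubled vertex to the lone one, not ``the two non-doubled vertices''; the uniform statement is simply that positions $s_3,s_4$ carry two distinct vertices of $2\Delta_2$ and $a$ is the barycentric vector of their midpoint.
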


\begin{proof}
For $k=1,2$ it is easy to show that the claimed cases exhaust all possibilities for $S$. Let us see this for $k=2$. Once we know that three of the elements of $S$ are the vertices of $2\Delta_2$ the six possibilities come from the fact that the other two can either be also vertices (either the same or two different ones), they can both be midpoints of edges (either the same or two different ones), or they can be a vertex and a midpoint (either opposite or adjacent). The expression for the $5$-tuples follows from Proposition~\ref{prop:tuple} and the easy computation of the spaces of affine dependences of the $2+6$ cases of $S$.
(In all nonprimitive cases the index is two, so there is no choice for the generator $q$ of $\pi(\Lambda)/\Lambda_S$).

For $k=3$ our statement follows from Lemma~\ref{lemma:dim3enum} and the definition of fine family.

For $k=4$ just observe that Theorem~\ref{thm:bound} applies to all hollow $4$-simplices, not only empty or cyclic ones.
\end{proof}

The two missing ingredients to turn Theorem~\ref{thm:mainhollow} into a more explicit description are: 
\begin{itemize}
\item An analysis of what finite non-cyclic groups can arise as $G_P=\Lambda/\Lambda_P$. Since they are (isomorphic to) quotients of $\Z^4$, they can be written as $\Z_{n_1}\oplus\Z_{n_2}\oplus\Z_{n_3}\oplus\Z_{n_4}$ with $n_i$ dividing $n_{i+1}$, $i=1,2,3$. This implies each simplex to be representable by (at most) four $5$-tuples like the ones used in the cyclic case, but we would expect a simpler description to be possible.

\item An enumeration of all hollow $4$-simplices of volume up to $5184$, pruning those that belong to the infinite families with $k\le 3$.
\end{itemize}

\bibliographystyle{amsalpha}

\end{document}